%

\documentclass[article]{imsart}

\RequirePackage{amsthm,amsmath,amsfonts,mathabx}
\RequirePackage[numbers]{natbib}
\RequirePackage[colorlinks,citecolor=blue,urlcolor=blue]{hyperref}


\startlocaldefs
\theoremstyle{plain}
\newtheorem{thm}{Theorem}[section]
\newtheorem{lem}{Lemma}[section]
\newtheorem{prop}{Proposition}[section]
\newtheorem{defn}{Definition}[section]
\newtheorem{rem}{Remark}[section]
\endlocaldefs

\begin{document}

\begin{frontmatter}

\title{Consistency of some sequential experimental design strategies for excursion set estimation based on vector-valued Gaussian processes}
\runtitle{Consistency of sequential experimental design strategies}


\author{\fnms{Philip} \snm{Stange}\ead[label=e1]{philip.stange@unibe.ch}}
\and
\author{\fnms{David} \snm{Ginsbourger}\ead[label=e2]{david.ginsbourger@unibe.ch}}

\address{University of Bern\\
Institute of Mathematical Statistics and Actuarial Science\\
Alpeneggstrasse 22, 3012 Bern, Switzerland\\
\printead{e1,e2}}

\runauthor{Stange and Ginsbourger}

\begin{abstract}
We tackle the extension to the vector-valued case of consistency results for Stepwise Uncertainty Reduction sequential experimental design strategies established in \cite{key-1}. 
This lead us in the first place to clarify, assuming a compact index set, how the connection between continuous Gaussian processes and Gaussian measures on the Banach space of continuous functions carries over to vector-valued settings. 
From there, a number of concepts and properties from \cite{key-1} can be readily extended.  
However, vector-valued settings do complicate things for some results, mainly due to the lack of continuity for the pseudo-inverse mapping that affects the conditional mean and covariance function given finitely many pointwise observations. We apply obtained results to the Integrated Bernoulli Variance and the Expected Measure Variance uncertainty functionals employed in \cite{key-3} for the estimation for excursion sets of vector-valued functions.
\end{abstract}





\end{frontmatter}

\newcommand{\bm}[1]{\boldsymbol{#1}}
\global\long\def\E{\mathbb{E}}%
\global\long\def\I{\mathbf{1}}%
\global\long\def\N{\mathbb{N}}%
\global\long\def\R{\mathbb{R}}%
\global\long\def\Rdd{\mathbb{R}^{d\times d}}%
\global\long\def\X{\mathbb{X}}%
\global\long\def\XX{\mathbb{X\times\mathbb{X}}}%
\global\long\def\Cd{}%
\global\long\def\Cdd{\mathbb{}}%
\global\long\def\norm#1{\mathbb{\left\Vert #1\right\Vert }}%
\global\long\def\r#1{\xrightarrow{#1}}%
\global\long\def\wraum{}%
\global\long\def\c#1{\mathcal{#1}}%
\global\long\def\b#1{\mathbb{#1}}%

\global\long\def\t{t}%


\section{Introduction}

Sequential design of experiments is an important statistical area dealing
with the step by step assignment of resources (typically, experiments, measurements, simulations) towards reducing the uncertainty about some quantity of interest. Bect et al. have in \cite{key-1} reinforced the theoretical foundations for the analysis of a large class of strategies that are built according to the stepwise uncertainty reduction (SUR) paradigm. 
This has enabled them to establish some broader consistency results for the considered strategies under the assumption that the function of interest is a sample path of the Gaussian process model used to construct the sequential design. \cite{key-1} is based on the idea that each of the SUR sequential design
strategies involves an uncertainty functional applied to a sequence
of conditional probability distributions such that for any sequential
design the resulting sequence of random variables, that we will denote
by $\left(H_{n}\right)_{n\in\b N}$, is a supermartingale with respect
to the filtration generated by the observations. This is called supermartingale
property of the underlying uncertainty functional. 

In the present work we are establishing extensions of these consistency results to vector-valued (multi-output) settings, with a focus on the situation where multiple quantities are all observed at the same time and may correlate with each other. While such extensions may seem quite natural, so far only very few works have used vector-valued Gaussian processes in theoretical settings, which has motivated us to investigate this aspect and establish links to Gaussian measures on corresponding function spaces. The latter is all the more crucial that the connection between Gaussian processes and Gaussian measures plays a central role in the theoretical constructions used in \cite{key-1} to prove consistency of (scalar-valued) SUR sequential design strategies.

We assume throughout that the function of interest is an element in the
space of continuous functions from a compact metric space $\left(\b X,\mathfrak{d}\right)$
to $\mathbb{R}^{d}$ (for $d\in\b N$), denoted by $\c C\left(\b X;\mathbb{R}^{d}\right)$,
and a sample path of the multivariate Gaussian process $\xi$ that
is used to construct the sequential design. This means $\xi=\left(\xi_{1},...,\xi_{d}\right)$
is a $\mathbb{R}^{d}$-valued Gaussian process (every
$\xi_{i}$ is a $\b R$-valued Gaussian process) with continuous sample
paths defined on a compact metric space $\left(\b X,\mathfrak{d}\right)$. Observations
\[
Z_{n}=\xi\left(X_{n}\right)+\varepsilon_{n}
\]
for $n\geq1$ are to be made sequentially in order to estimate the
quantity of interest. Furthermore, we assume the sequence of
observation errors $\left(\varepsilon_{n}\right)_{n\in\b N}$ to be
independent of the Gaussian process $\xi$ and distributed as independent
centered Gaussian vectors. 

We can then directly take over the definition of a SUR strategy from
\cite{key-1}, which starts with the choice of a ``measure
of residual uncertainty'' for the quantity of interest after $n$
observations
\[
H_{n}=\c H\left(P_{n}^{\xi}\right),
\]
which is a function of the conditional distribution $P_{n}^{\xi}$
of $\xi$ given $\c F_{n}$, where $\mathcal{F}_{n}$ is the $\sigma$-algebra
generated by $X_{1},Z_{1},...,X_{n},Z_{n}$. For $n\geq 0$, the SUR sampling criterion $J_{n}$ associated with $\c H$ is then a function from $\b X$ to $\left[0,\infty\right]$
and defined for $x\in\b X$ as the conditional expectation
\[
J_{n}\left(x\right)=\b E\left[H_{n+1}|\mathcal{F}_{n},X_{n+1}=x\right],
\]
assuming that $H_{n+1}$ is integrable for any choice of $x\in\b X$.
The value of the sampling criterion $J_{n}\left(x\right)$ quantifies
the expected residual uncertainty at time $n+1$ if the next observation
is to be made at $x\in\b X$. The sequential design is then constructed
by minimizing the expected residual uncertainty over $\b X$ 
\[
X_{n+1}\in\underset{x\in\b X}{\text{argmin }}J_{n}\left(x\right).
\]

Note that some of the statements from \cite{key-1} carry over smoothly to the vector-valued case, with proofs needing moderate adjustments and further arguments to hold in the more general setting (e.g. Proposition \ref{prop:3} and \ref{prop:4}). However, other aspects of extending SUR consistency results to vector-valued settings pose novel challenges. One of the key observations in \cite{key-1} is that the sampling criterion is continuous when the covariance
of the underlying Gaussian process is bounded away from zero. Things
become more complicated in the vector-valued case as the pseudo-inverse
mapping presents discontinuities between matrices of different ranks. This affects in turn extending the existence result for SUR sequential design and deserves some more thorough consideration (see Proposition \ref{prop:5}, Lemma \ref{lem:3} and Proposition \ref{prop:7}).

Since only two of the example design strategies from \cite{key-1} have a straightforward extension to vector-valued settings, we focus on them and establish consistency for multi-output extensions of the algorithms of \cite{key-1} dedicated to the excursion probability/ excursion set estimation.
For $f=\left(f_{1},...,f_{d}\right)\in\mathcal{C}\left(\mathbb{X};\mathbb{R}^{d}\right)$
define the  set
\[
\Gamma\left(f\right):=\left\{ u\in\mathbb{X}:f\left(u\right)\in{\bf T}\right\}, 
\]
where ${\bf T}\subset\b R^{d}$ is some closed set.
For the case of orthants ${\bf T}:=\left[\t_{1},\infty\right)\times...\times\left[\t_{d},\infty\right)$,
\[
\Gamma\left(f\right)=\left\{ u\in\mathbb{X}:f_{i}\left(u\right)\geq \t_{i},i\in\left\{ 1,...,d\right\} \right\} .
\]
is called excursion set with excursion threshold
$T=\left(\t_{1},...,\t_{d}\right)^{\top}\in\mathbb{R}^{d}$.
Given a finite measure $\mu$ on $\mathbb{X}$,
the first measure of residual uncertainty in the excursion case is called the integrated Bernoulli variance (IBV) and defined by
\[
H_{n}^{IBV}=\int_{\b X}p_{n}\left(1-p_{n}\right)d\mu,
\]
where $p_{n}\left(x\right)=P\left(\xi\left(x\right)\geq T|\mathcal{F}_{n}\right)$
denotes the excursion probability with respect to $\mathcal{F}_{n}$. 
Still with $\mu$ a finite measure on $\mathbb{X}$, the second measure of residual uncertainty is the variance of the excursion volume (EMV), defined by
\[
H_{n}^{EMV}=\text{Var}\left(\mu\left(\Gamma\left(\xi\right)\right)|\mathcal{F}_{n}\right).
\]
On the first look these criteria may seem to be the same as in \cite{key-1},
since $H_{n}^{IBV}$ and $H_{n}^{EMV}$ are again function to $\mathbb{R}$. In fact, 
the vector-valued aspect is hidden within the definition of the excursion set/ probability and yet complicates theoretical investigations on the aforementioned residual uncertainties
(see Section \ref{sec:5}). 

Both measures of residual uncertainty also appear in \cite{key-3},
where a bivariate Gaussian process $\xi=\left(\xi_{S},\xi_{T}\right)$
is used to jointly model salinity and temperature fields 
to delineate the river plume in a considered domain at the interface between the Fjord of Trondheim and the ocean. Another related setting of learning many tasks simultaneously using kernel methods (multi-task learning) also arises in \cite{key-12} and \cite{key-31} and has turned out
to significantly outperform standard single-task learning methods
in some cases. The multi-output model for Gaussian processes was also
recently studied and encouraged in \cite{key-30}. 

In the next section we will prepare the ground for our theoretical investigations on SUR strategies for the vector-valued case, starting with a review of the separable Banach space $\c C\left(\b X;\mathbb{R}^{d}\right)$ equipped with the supremum
norm $\left\Vert \cdot\right\Vert _{\infty}$. In particular, we will explore the Borel $\sigma$-algebra on this space as well as its dual space $\c C\left(\b X;\mathbb{R}^{d}\right)^{*}$,
see Proposition \ref{prop:2} and Theorem \ref{thm:6} in the Appendix.
This will enable us to define Gaussian random elements in $\c C\left(\b X;\mathbb{R}^{d}\right)$ and to identify continuous $\mathbb{R}^{d}$-valued Gaussian processes as such random elements, see Theorem \ref{thm:1}. The connection will later be crucial for the proofs of the consistency results and the analysis of SUR sequential design, since we will work with the distribution $P^{\xi}$ of the Gaussian process $\xi$ (or more precisely with its conditional distribution given finitely many observations), which will turn out to be a Gaussian measure on $\c C\left(\b X;\mathbb{R}^{d}\right)$.
The connection between continuous $\mathbb{R}^{d}$-valued Gaussian process and Gaussian random elements in $\c C\left(\b X;\mathbb{R}^{d}\right)$ will be tackled with Theorem~\ref{thm:2} and Lemma~\ref{lem:2}, before defining more precisely the statistical model and design problem in Section 3 as introduced in \cite{key-1}. In Section 4 we will discuss uncertainty functionals and some properties that are important for the existence of SUR sequential design (Lemma \ref{lem:3}) and state general sufficient conditions for the consistency of SUR sequential designs, see Proposition \ref{prop:7}. In Section 5 we will finally apply these consistency results to the two SUR sequential designs introduced above. The proofs from Sections \ref{sec:3}, \ref{sec:4} and \ref{sec:5} are postponed to the Appendix.

\section{\label{sec:2}Gaussian processes and Gaussian random elements}

Let $\left(\Omega,\mathcal{F},P\right)$ be the underlying probability
space. In this section we will focus on the connection of multivariate
Gaussian processes and Gaussian random elements in the space of continuous
functions from a compact metric space $\left(\b X,\mathfrak d\right)$ to $\b R^{d}$
that we denote by $\mathcal{C}\left(\mathbb{X};\b R^{d}\right)$ for
some fixed $d\in\mathbb{N}$. For this purpose we also have to investigate
the (continuous) dual space of the latter function space. The most
important statement of this section is Theorem \ref{thm:1}, which
shows that we can identify each continuous multivariate Gaussian process
as Gaussian random element in $\c C\left(\b X;\b R^{d}\right)$ with
respect to its Borel $\sigma$-algebra and vice versa. The theory
of Gaussian random elements and measures is based on \cite{key-20}
and \cite{key-21}. 
\begin{defn}
A Borel probability measure $\nu$ on a (real) Banach space $\left(B,\left\Vert \cdot\right\Vert \right)$ 
is called \emph{Gaussian} if for any bounded linear functional $L\in B^{*}$
the induced measure $\nu\circ L^{-1}$ is a Gaussian measure on $\left(\mathbb{R},\mathcal{B}\left(\mathbb{R}\right)\right)$.
A \textbf{$B$}-valued random element $X:\Omega\rightarrow B$ is
called Gaussian if its distribution is a Gaussian measure on $B$,
i.e. if for any bounded linear functional $L\in B^{*}$ the random
variable $\langle X,L\rangle$ is Gaussian.
\end{defn}

Let us stress that here a Borel probability measure $\nu$ on $\left(\mathbb{R},\mathcal{B}\left(\mathbb{R}\right)\right)$ is called Gaussian if it is either the Dirac measure $\delta_{\mu}$
at a point $\mu \in\b R$ or has a Lebesgue density of the form 
$f_{\nu}:\mathbb{R}\rightarrow\mathbb{R},\;x\mapsto\frac{1}{\sqrt{2\pi\sigma^{2}}}\exp\left(-\frac{\left(x-\mu\right)^{2}}{2\sigma^{2}}\right)$ for some $\mu\in\mathbb{R}$ and $\sigma>0$  (For any Dirac measure we put $\sigma=0$).  

Recall that a random element $X$ with values in a separable Banach space $B$ (with Borel sigma-algebra denoted $\mathcal{B}$) is Bochner-integrable if $X$ is $\mathcal{F}/\mathcal{B}$-measurable  and $\int_{\Omega}\left\Vert X\right\Vert dP<\infty$.
In this case we can define the expectation of the random element $X$
using the Bochner integral
\[
\mathbb{E}\left[X\right]:=\int_{\Omega}XdP=\int_{\Omega}X\left(\omega\right)dP\left(d\omega\right).
\]
$\mathbb{E}\left[X\right]$ is the unique element in $B$ with the
property $\mathbb{E}\left[\left\langle X,L\right\rangle \right]=\left\langle \mathbb{E}\left[X\right],L\right\rangle $
for all $L\in B^{*}$ (where $\langle, \rangle$ stands for the duality Bracket on $B$), i.e. Bochner and Pettis integral coincide. The
proofs and more on the Bochner integral can be found in the first chapters of \cite{key-34}. Note that
we are only interested in Gaussian random elements with values in
separable Banach spaces and these are already Bochner-integrable by
Fernique's Theorem (see \cite{key-2}, Theorem 8.2.1). The following
proposition comes as a special case of Proposition 2.8 in \cite{key-21}.
\begin{prop}
\label{prop:1}Let $\left(B,\left\Vert \cdot\right\Vert \right)$
be a separable Banach space. The characteristic functional of a Gaussian
random element with values $B$ is of the form 
\[
\varphi_{X}\left(L\right):=\mathbb{E}\left[\exp\left(i\left\langle X,L\right\rangle \right)\right]=\exp\left(i\left\langle m_{X},L\right\rangle -\frac{1}{2}\left\langle K_{X}L,L\right\rangle \right),\;L\in B^{*},
\]
where $m_{X}=\mathbb{E}\left[X\right]\in B$ is called the mean of
$X$ and $K_{X}:B^{*}\rightarrow B$ is called the covariance operator
of $X$ and defined by $R_{X}L=\mathbb{E}\left[\left\langle X,L\right\rangle X\right]$.

Conversely, if $X$ is a random element with values in $B$ and characteristic
functional of the above form, where $m_{X}\in B$ and $K_{X}:B^{*}\rightarrow B$
is a positive and symmetric operator, this means $\left\langle KL,L\right\rangle \geq0$
for all $L\in B^{*}$ and $\left\langle KL_{1},L_{2}\right\rangle =\left\langle L_{1},KL_{2}\right\rangle $
for all $L_{1},L_{2}\in B^{*}$, then is $X$ a Gaussian random element
with mean $m_{X}$ and covariance operator $K_{X}$. 

\end{prop}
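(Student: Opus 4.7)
The plan is to treat the two implications separately, leveraging the fact that by definition a $B$-valued random element is Gaussian precisely when $\langle X,L\rangle$ is a real Gaussian random variable for every $L\in B^*$.

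For the forward direction, I would start from the one-dimensional characteristic function formula: for each $L\in B^*$, the variable $\langle X,L\rangle$ is Gaussian with mean $\mu_L=\E[\langle X,L\rangle]$ and variance $\sigma_L^2=\mathrm{Var}(\langle X,L\rangle)$, so $\varphi_X(L)=\E[\exp(i\langle X,L\rangle)]=\exp(i\mu_L-\tfrac{1}{2}\sigma_L^2)$. The identification $\mu_L=\langle m_X,L\rangle$ follows from the Pettis/Bochner compatibility stated just before the proposition, which itself relies on Fernique's theorem to ensure Bochner integrability of $X$. For the quadratic term I would use Fernique's theorem a second time to get $\E[\|X\|^2]<\infty$, so that $\langle X-m_X,L\rangle\,(X-m_X)$ is Bochner integrable and defines $K_X L:=\E[\langle X-m_X,L\rangle(X-m_X)]\in B$ (the formula displayed in the statement being the centered version after reducing to $m_X=0$). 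Then $\langle K_X L,L\rangle=\E[\langle X-m_X,L\rangle^2]=\sigma_L^2$, yielding the claimed identity. Boundedness of $K_X$ comes from the Bochner integral norm estimate together with $\E[\|X\|^2]<\infty$; symmetry and positivity are immediate from the defining formula.

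For the converse, I would compute the real characteristic function of $\langle X,L\rangle$ directly by testing the given functional at $tL$: for $t\in\R$,
\[
\E[\exp(it\langle X,L\rangle)]=\E[\exp(i\langle X,tL\rangle)]=\varphi_X(tL)=\exp\!\bigl(it\langle m_X,L\rangle-\tfrac{t^2}{2}\langle K_X L,L\rangle\bigr).
\]
Using positivity $\langle K_X L,L\rangle\geq 0$, the right-hand side is recognized as the characteristic function of a real Gaussian with mean $\langle m_X,L\rangle$ and variance $\langle K_X L,L\rangle$. By the uniqueness theorem for characteristic functions, $\langle X,L\rangle$ is Gaussian, and since $L\in B^*$ was arbitrary, $X$ is a Gaussian random element by definition. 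Identification of $m_X$ with the mean and of $K_X$ with the covariance operator follows by matching $\varphi_X(L)$ with the expression obtained in the forward direction, invoking uniqueness of the decomposition of a Gaussian characteristic functional.

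The main obstacle I expect is the careful construction of $K_X$ as a well-defined bounded linear operator $B^*\to B$ and the subsequent verification that the exponent $\langle m_X,L\rangle-\tfrac{1}{2}\langle K_X L,L\rangle$ really coincides with the one-dimensional Gaussian log-characteristic function. This requires invoking Fernique's theorem to obtain square integrability of $\|X\|$, reconciling the centered versus uncentered form of the covariance operator, and ensuring the duality bracket commutes with the Bochner integral. The rest of the argument is essentially a routine transcription of the one-dimensional Gaussian characteristic function identity to the Banach space setting.
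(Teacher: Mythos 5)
Your proposal is correct, but note that the paper does not prove this proposition at all: it is simply quoted as a special case of Proposition 2.8 in \cite{key-21}, so there is no internal proof to compare against. Your direct argument is the standard one and is sound: in the forward direction you reduce to the scalar Gaussian characteristic function of $\left\langle X,L\right\rangle$, use Fernique's theorem twice (for $\mathbb{E}\left[\left\Vert X\right\Vert \right]<\infty$ and $\mathbb{E}\left[\left\Vert X\right\Vert ^{2}\right]<\infty$) to justify the Bochner integrals defining $m_{X}$ and $K_{X}$, and you correctly flag that the covariance operator must be taken in centered form $K_{X}L=\mathbb{E}\left[\left\langle X-m_{X},L\right\rangle \left(X-m_{X}\right)\right]$ for the exponent $-\tfrac{1}{2}\left\langle K_{X}L,L\right\rangle$ to equal $-\tfrac{1}{2}\mathrm{Var}\left(\left\langle X,L\right\rangle \right)$ --- the uncentered formula displayed in the statement only agrees with this when $m_{X}=0$, so your remark fixes a genuine imprecision in the statement rather than introducing one. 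The converse via evaluation at $tL$, positivity of $\left\langle K_{X}L,L\right\rangle$, and uniqueness of characteristic functions is also standard. The only step I would tighten is the final identification in the converse: rather than appealing loosely to ``uniqueness of the decomposition of a Gaussian characteristic functional,'' observe that $\langle X,L\rangle\sim\mathcal{N}\left(\left\langle m_{X},L\right\rangle ,\left\langle K_{X}L,L\right\rangle \right)$ gives $\mathbb{E}\left[\left\langle X,L\right\rangle \right]=\left\langle m_{X},L\right\rangle$ and $\mathrm{Var}\left(\left\langle X,L\right\rangle \right)=\left\langle K_{X}L,L\right\rangle$ for every $L$, and then conclude $m_{X}=\mathbb{E}\left[X\right]$ and $K_{X}L=\mathbb{E}\left[\left\langle X-m_{X},L\right\rangle \left(X-m_{X}\right)\right]$ because elements of $B$ are separated by $B^{*}$ (Hahn--Banach), using polarization together with the assumed symmetry of $K_{X}$ for the off-diagonal values $\left\langle K_{X}L_{1},L_{2}\right\rangle$. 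What the paper's citation buys is brevity; what your route buys is a self-contained argument that makes explicit exactly where separability and Fernique's theorem enter.
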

By the definition of Gaussian random elements it is clear that the
dual space plays an important role if one wants to characterize a
Gaussian random element in some Banach space. We will first describe
the dual space of $\mathcal{C}\left(\mathbb{X};\b R^{d}\right)$ before
we turn to continuous Gaussian processes with values in $\mathbb{R}^{d}$
and finally connect both objects.

It is a well known result that $\mathcal{C}\left(\mathbb{X};\b R^{d}\right)$
is a separable Banach space if we equip it with the supremum norm
\[
\left\Vert f\right\Vert _{\infty}=\sup_{x\in\mathbb{X}}\left|f\left(x\right)\right|_{\max},
\]
where $\left|x\right|_{\max}:=\max_{i\in\left\{ 1,...,d\right\} }\left|x_{i}\right|$
is the maximum norm on $\mathbb{R}^{d}$ (see Theorem 4.19 in \cite{key-10}).
Similar to the space of continuous $\mathbb{R}$-valued functions
it can be shown that for the Borel $\sigma$-algebra it holds
\[
\mathcal{B}\left(\mathcal{C}\left(\mathbb{X};\mathbb{R}^{d}\right)\right)=\sigma\left(\left\{ \delta_{x}:x\in\mathbb{X}\right\} \right),
\]
where $\delta_{x}:\mathcal{C}\left(\mathbb{X};\mathbb{R}^{d}\right)\rightarrow\mathbb{R}^{d}$
by $\delta_{x}\left(f\right)=f\left(x\right)$ are the evaluation
maps for $x\in\mathbb{R}$. Lemma \ref{lem:10} in the Appendix shows
that $\mathcal{C}\left(\mathbb{X};\b R^{d}\right)$ can be interpreted
as the product space 
\[
\mathcal{C}\left(\mathbb{X}\right)\times...\times\mathcal{C}\left(\mathbb{X}\right)=\left\{ \left(f_{i}\right)_{1\leq i\leq d}:f_{i}\in\mathcal{C}\left(\mathbb{X}\right)\text{\text{ for }}1\leq i\leq d\right\} ,
\]
where $\mathcal{C}\left(\mathbb{X}\right)$ is the space of continuous
functions from $\b X$ to $\b R$. We can use this to characterize
the continuous dual space $\mathcal{C}\left(\mathbb{X};\b R^{d}\right)^{*}$.
\begin{prop}
\label{prop:2} Let $\left(\mathbb{X},d\right)$ be a compact metric
space. The dual space of $\mathcal{C}\left(\mathbb{X};\mathbb{R}^{d}\right)$
is isometrically isomorphic to $\mathfrak{M}\left(\b X\right)\times...\times\mathfrak{M}\left(\b X\right)$,
where $\mathfrak{M}\left(\b X\right)$ is the space of finite signed
measures on $\mathbb{X}$ equipped with the Borel $\sigma$-algebra.
This means for every $L\in\mathcal{C}\left(\mathbb{X};\mathbb{R}^{d}\right)^{*}$
there exist finite signed measure $\mu_{i}$ for $i\in\left\{ 1,...,d\right\} $
such that for all $f=\left(f_{1},...,f_{d}\right)\in\mathcal{C}\left(\mathbb{X};\mathbb{R}^{d}\right)$
it holds
\[
L\left(f\right)=\sum_{i=1}^{d}\int_{\b X}f_{i}\left(x\right)\mu_{i}\left(dx\right).
\]
\end{prop}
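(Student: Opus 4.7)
The plan is to leverage the identification $\mathcal{C}(\mathbb{X};\mathbb{R}^d)\simeq\mathcal{C}(\mathbb{X})^d$ provided by Lemma \ref{lem:10} together with the classical (scalar) Riesz--Markov--Kakutani theorem, which yields an isometric isomorphism $\mathcal{C}(\mathbb{X})^{*}\simeq\mathfrak{M}(\mathbb{X})$ via $\ell\mapsto\mu_{\ell}$ with $\ell(g)=\int_{\mathbb{X}}g\,d\mu_{\ell}$ and $\|\ell\|=|\mu_{\ell}|(\mathbb{X})$. The idea is simply to restrict any $L\in\mathcal{C}(\mathbb{X};\mathbb{R}^d)^{*}$ to each coordinate and apply the scalar result componentwise.

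First I would introduce, for each $i\in\{1,\dots,d\}$, the coordinate embedding $\iota_{i}:\mathcal{C}(\mathbb{X})\to\mathcal{C}(\mathbb{X};\mathbb{R}^d)$ sending $g$ to the tuple whose $i$-th slot is $g$ and whose remaining slots vanish. Since $\mathbb{R}^d$ carries the maximum norm, $\|\iota_{i}(g)\|_{\infty}=\|g\|_{\infty}$, so $\iota_{i}$ is a linear isometric embedding. Consequently $L_{i}:=L\circ\iota_{i}\in\mathcal{C}(\mathbb{X})^{*}$, and the scalar Riesz theorem supplies a unique $\mu_{i}\in\mathfrak{M}(\mathbb{X})$ with $L_{i}(g)=\int_{\mathbb{X}}g\,d\mu_{i}$. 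Decomposing an arbitrary $f=(f_{1},\dots,f_{d})$ as $f=\sum_{i=1}^{d}\iota_{i}(f_{i})$ and using linearity of $L$ yields the representation
\[
L(f)=\sum_{i=1}^{d}L_{i}(f_{i})=\sum_{i=1}^{d}\int_{\mathbb{X}}f_{i}(x)\,\mu_{i}(dx).
\]
Uniqueness of the tuple $(\mu_{1},\dots,\mu_{d})$ is inherited from the uniqueness part of the scalar Riesz theorem. Conversely, any tuple $(\mu_{1},\dots,\mu_{d})\in\mathfrak{M}(\mathbb{X})^d$ defines via the same formula a linear functional on $\mathcal{C}(\mathbb{X};\mathbb{R}^d)$, continuous by the bound $|L(f)|\leq\|f\|_{\infty}\sum_{i}|\mu_{i}|(\mathbb{X})$, so the correspondence is a linear bijection.

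To promote this bijection to an isometry, I would equip $\mathfrak{M}(\mathbb{X})^{d}$ with the $\ell^{1}$-sum of total variation norms, $\|(\mu_{1},\dots,\mu_{d})\|:=\sum_{i=1}^{d}|\mu_{i}|(\mathbb{X})$. The bound above gives $\|L\|\leq\sum_{i}|\mu_{i}|(\mathbb{X})$. For the matching lower bound, fix $\varepsilon>0$ and, using the scalar Riesz isometry, pick $g_{i}\in\mathcal{C}(\mathbb{X})$ with $\|g_{i}\|_{\infty}\leq 1$ and $\int g_{i}\,d\mu_{i}\geq|\mu_{i}|(\mathbb{X})-\varepsilon/d$; then $f:=(g_{1},\dots,g_{d})$ satisfies $\|f\|_{\infty}\leq 1$ (again because $\mathbb{R}^d$ carries the max norm, so the coordinates decouple) and $L(f)\geq\sum_{i}|\mu_{i}|(\mathbb{X})-\varepsilon$. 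Letting $\varepsilon\to 0$ closes the gap.

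No step is really an obstacle: the argument is essentially componentwise. The only point that requires care is the coherence between the two norms chosen on the product: the supremum norm on $\mathcal{C}(\mathbb{X};\mathbb{R}^d)$ uses the max norm on $\mathbb{R}^d$ (which decouples the coordinates) and this is precisely what forces the $\ell^1$-sum of total variations on the dual side to make the isomorphism isometric; a different choice of norm on $\mathbb{R}^d$ would induce the corresponding dual norm on $\mathfrak{M}(\mathbb{X})^d$, so this matching should be stated explicitly.
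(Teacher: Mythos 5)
Your proposal is correct and follows essentially the same route as the paper: identify $\mathcal{C}(\mathbb{X};\mathbb{R}^d)$ with the max-norm product $\mathcal{C}(\mathbb{X})\times\dots\times\mathcal{C}(\mathbb{X})$ (the paper's Lemma \ref{lem:10}) and reduce componentwise to the scalar Riesz--Markov theorem. The only difference is presentational: the paper delegates the duality $\left(X_{1}\times\dots\times X_{d}\right)^{*}\simeq X_{1}^{*}\times\dots\times X_{d}^{*}$ with the $\ell^{1}$-sum of operator norms to its abstract Theorem \ref{thm:6}, whereas you re-derive exactly that statement inline (coordinate embeddings, decomposition $f=\sum_{i}\iota_{i}(f_{i})$, and the two-sided norm estimate), including the correct observation that the max norm on $\mathbb{R}^{d}$ is what forces the $\ell^{1}$ total-variation norm on the dual side.
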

\begin{proof}
Since $\b X$ is a compact metric space, every measure in $\mathfrak{M}\left(\b X\right)$
is also a Radon measure. The statement that the dual space of $\mathcal{C}\left(\mathbb{X}\right)$
is the space $\mathfrak{M}\left(\b X\right)$ of finite signed measures
is well-known as Riesz--Markov Representation Theorem, see
Chapter 14 in \cite{key-32}. By Lemma \ref{lem:10} 
we know that $\mathcal{C}\left(\mathbb{X}\right)\times...\times\mathcal{C}\left(\mathbb{X}\right)$
and $\mathcal{C}\left(\mathbb{X};\mathbb{R}^{d}\right)$ are isometrically
isomorphic and Theorem \ref{thm:6} shows that $\left(\mathcal{C}\left(\mathbb{X}\right)\times...\times\mathcal{C}\left(\mathbb{X}\right)\right)^{*}$
is isometrically isomorphic to $\mathfrak{M}\left(\b X\right)\times...\times\mathfrak{M}\left(\b X\right)$.
The last statement follows again by Theorem \ref{thm:6}.
\end{proof}
\begin{defn}
Let $\left(\X,d\right)$ be a compact metric space and $\xi=\left(\xi\left(x\right)\right)_{x\in\mathbb{X}}$
a stochastic process with state space $\left(\mathbb{R}^{d},\c B\left(\mathbb{R}^{d}\right)\right)$.
$\xi$ is called a multivariate ($d$-variate, vector-valued) Gaussian
process if the finite-dimensional distributions of $\xi$ are Gaussian,
i.e. if 
\[
\left(\xi\left(x_{1}\right)^{\top},...,\xi\left(x_{n}\right)^{\top}\right)^{\top}
\]
is a Gaussian vector in $\mathbb{R}^{dn}$ for every $n\geq1$
and $x_{1},...,x_{n}\in\mathbb{X}$. 
\end{defn}
Since the multivariate normal distribution is determined by its mean
vector and covariance matrix, the finite-dimensional distributions
of a multivariate Gaussian process $\xi$ are determined by the mean
function 
\[
m:\mathbb{X}\rightarrow\mathbb{R}^{d},\:x\mapsto\mathbb{E}\left[\xi\left(x\right)\right]
\]
and matrix covariance function
\[
k:\mathbb{X}\times\mathbb{X}\rightarrow\mathbb{R}^{d\times d},\:\left(x,y\right)\mapsto\mathbb{E}\left[\left(\xi(x)-m(x)\right)\left(\xi(y)-m(y)\right)^{\top}\right].
\]
Note that we sometimes write $\xi=\left(\xi_{1},...,\xi_{d}\right)$
for a multivariate Gaussian process, where $\xi_{i}\left(x\right):=\pi_{i}\left(\xi\left(x\right)\right)=\xi\left(x\right)_{i}$
is the i-th component of $\xi\left(x\right)$ for $i\in\left\{ 1,...,d\right\} $
and $x\in\b X$. By the definition of a multivariate Gaussian process
this means $\xi_{i}$ is a real-valued Gaussian process for all $i\in\left\{ 1,...,d\right\} $.
Furthermore, we have for the entries of the mean function
\[
m\left(x\right)_{i}=\mathbb{E}\left[\xi_{i}\left(x\right)\right]
\]
for $x\in\b X$ and $i\in\left\{ 1,...,d\right\} $ and for the entries of the matrix covariance function
\[
k(x,y)_{ij}=\mathbb{E}\left[\left(\xi_{i}(x)-m(x)_{i}\right)\left(\xi_{j}(y)-m(y)_{j}\right)\right]
\]
for $x,y\in\mathbb{X}$ and $i,j\in\left\{ 1,...,d\right\} $. Hence
the entries $k(x,y)_{ij}$ of the matrix $k(x,y)$ correspond to the
covariance between the outputs $\xi_{i}(x)$ and $\xi_{j}(y)$ and
describe the degree of correlation or similarity between them.
\begin{lem}
The matrix covariance function $k:\mathbb{X}\times\mathbb{X}\rightarrow\mathbb{R}^{d\times d}$
is
\begin{enumerate}
\item symmetric, that means 
\[
k\left(x,y\right)=k\left(y,x\right)^{\top}
\]
 for all $x,y\in\mathbb{X}$, and
\item positive semi-definite, that means 
\[
\sum_{i,j=1}^{n}\left\langle k\left(x_{i},x_{j}\right)a_{i},a_{j}\right\rangle =\sum_{i,j=1}^{n}a_{j}^{\top}k\left(x_{i},x_{j}\right)a_{i}\geq0
\]
for all $n\geq1$, $a_{i}\in\mathbb{R}^{d}$ not all null and $x_{i}\in\mathbb{X}$ $(1\leq i \leq n)$.
$k$ is called positive definite, if the inequality is strict.
\end{enumerate}
\end{lem}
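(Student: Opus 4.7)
The plan is to reduce both claims to elementary computations with the moment identities that define $k$, using only linearity of expectation and the fact that taking transpose commutes with expectation of a matrix-valued integrable random object.

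For symmetry, I would simply write
\[
k(y,x)^{\top}=\mathbb{E}\!\left[(\xi(y)-m(y))(\xi(x)-m(x))^{\top}\right]^{\top}=\mathbb{E}\!\left[(\xi(x)-m(x))(\xi(y)-m(y))^{\top}\right]=k(x,y),
\]
where the middle equality uses $(AB^{\top})^{\top}=BA^{\top}$ entrywise (so it commutes with the entrywise expectation of an integrable matrix). All moments involved are finite because $\xi(x)$ is Gaussian, hence has finite second moments.

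For positive semi-definiteness, given $n\geq 1$, vectors $a_{1},\dots,a_{n}\in\mathbb{R}^{d}$ and points $x_{1},\dots,x_{n}\in\mathbb{X}$, form the scalar random variable
\[
Y:=\sum_{i=1}^{n}a_{i}^{\top}\bigl(\xi(x_{i})-m(x_{i})\bigr).
\]
Since $Y$ is a linear combination of entries of a Gaussian vector (using that $(\xi(x_{1})^{\top},\dots,\xi(x_{n})^{\top})^{\top}$ is Gaussian by definition), $Y$ is centered and real-valued, so $\mathbb{E}[Y^{2}]\geq 0$. Expanding the product, swapping dummy summation indices $i\leftrightarrow j$ in one factor, and interchanging expectation with the finite sum gives
\[
0\leq \mathbb{E}[Y^{2}]=\sum_{i,j=1}^{n}a_{j}^{\top}\,\mathbb{E}\!\left[(\xi(x_{i})-m(x_{i}))(\xi(x_{j})-m(x_{j}))^{\top}\right]a_{i}=\sum_{i,j=1}^{n}a_{j}^{\top}k(x_{i},x_{j})a_{i},
\]
which is exactly the stated inequality.

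There is essentially no obstacle here; the only subtleties are bookkeeping ones (keeping track of transposes, making sure the scalar quantity $a_{i}^{\top}(\xi(x_{i})-m(x_{i}))$ is reshaped so that the definition of $k$ appears, and renaming indices to match the order $a_{j}^{\top}k(x_{i},x_{j})a_{i}$ stated in the lemma). Everything relies only on the defining property that finite-dimensional distributions of $\xi$ are Gaussian and on standard properties of the expectation of matrix-valued random elements.
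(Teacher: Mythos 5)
Your overall strategy is the standard one, and since the paper states this lemma without any proof there is nothing to compare against; judged on its own, the symmetry argument (transposing inside the entrywise expectation) is fine, and forming $Y=\sum_{i}a_{i}^{\top}\bigl(\xi(x_{i})-m(x_{i})\bigr)$ and using $\mathbb{E}[Y^{2}]\geq 0$ is the right idea for positive semi-definiteness.

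However, the last equality in your display is not correct, and the ``renaming indices'' you invoke cannot repair it. Expanding the square gives $\mathbb{E}[Y^{2}]=\sum_{i,j}a_{i}^{\top}\,\mathbb{E}\bigl[(\xi(x_{i})-m(x_{i}))(\xi(x_{j})-m(x_{j}))^{\top}\bigr]a_{j}=\sum_{i,j}a_{i}^{\top}k(x_{i},x_{j})a_{j}$, i.e.\ the index of the left test vector must match the first argument of $k$. The quantity you end with (and which the lemma literally displays), $\sum_{i,j}a_{j}^{\top}k(x_{i},x_{j})a_{i}$, differs from this by $\sum_{i,j}a_{i}^{\top}\bigl(k(x_{i},x_{j})-k(x_{j},x_{i})\bigr)a_{j}$, which need not vanish: cross-covariance blocks satisfy only $k(x,y)=k(y,x)^{\top}$, not $k(x,y)=k(y,x)$. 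Indeed the mis-indexed sum can be strictly negative: take $d=2$, two points, $\xi(x_{1})=(0,g)^{\top}$, $\xi(x_{2})=(-g,0)^{\top}$ with $g\sim\mathcal{N}(0,1)$, and $a_{1}=e_{1}$, $a_{2}=e_{2}$; then $\sum_{i,j}a_{j}^{\top}k(x_{i},x_{j})a_{i}=-2$ while $\sum_{i,j}a_{i}^{\top}k(x_{i},x_{j})a_{j}=0$. So the index order printed in the lemma is evidently a typo for the standard condition $\sum_{i,j}\langle a_{i},k(x_{i},x_{j})a_{j}\rangle\geq 0$; your computation proves exactly that (correct) statement, but the claimed dummy-index swap to reach the order $a_{j}^{\top}k(x_{i},x_{j})a_{i}$ is a genuine error --- swapping $i\leftrightarrow j$ and using part 1 only reproduces $\sum_{i,j}a_{i}^{\top}k(x_{i},x_{j})a_{j}$ again. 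State and prove the correctly matched form instead of trying to force the printed one.
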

Consideration of sample path properties makes it possible to think
of Gaussian processes as measurable maps $\xi:\Omega\rightarrow\b S$
from the underlying probability space to a function space $\b S\subset\left(\mathbb{R}^{d}\right)^{\b X}$.
In the following we will see that the induced random element will
also be Gaussian if we consider continuous sample paths. See also
\cite{key-16} for the case $\b X\subset\b R$.
\begin{thm}
\label{thm:1}A multivariate Gaussian process $\xi$ with continuous
sample paths is a Gaussian random element in $\left(\mathcal{C}\left(\mathbb{X};\mathbb{R}^{d}\right),\left\Vert \cdot\right\Vert _{\infty}\right)$
with respect to the Borel $\sigma$-algebra and hence its distribution
is a Gaussian measure on this space . Vice versa, we can find for
every Gaussian measure $\nu$ on $\mathcal{C}\left(\mathbb{X};\mathbb{R}^{d}\right)$
a multivariate Gaussian process with continuous sample paths that
has distribution $\nu$. The distribution of $\xi$ is uniquely determined
by the mean function $m$ and covariance function $k$, so we use
the notation $\xi\sim\mathcal{GP}_{d}\left(m,k\right)$.
\end{thm}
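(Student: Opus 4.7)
For the first direction, suppose $\xi$ is a multivariate Gaussian process with continuous sample paths. The plan is to first establish Borel-measurability of $\omega\mapsto\xi(\cdot,\omega)$ as a map into $\mathcal{C}(\mathbb{X};\mathbb{R}^d)$, and then verify that $L(\xi)$ is Gaussian for every $L\in\mathcal{C}(\mathbb{X};\mathbb{R}^d)^*$. Measurability is immediate from the identity $\mathcal{B}(\mathcal{C}(\mathbb{X};\mathbb{R}^d))=\sigma(\{\delta_x:x\in\mathbb{X}\})$ recalled before the theorem: since each $\delta_x\circ\xi=\xi(x)$ is a random vector by hypothesis, $\xi$ is Borel-measurable into the function space. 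For the Gaussian conclusion, pick $L\in\mathcal{C}(\mathbb{X};\mathbb{R}^d)^*$ and use Proposition~\ref{prop:2} to write $L(f)=\sum_{i=1}^d\int_{\mathbb{X}}f_i\,d\mu_i$ for finite signed measures $\mu_i$. Fixing finite Borel partitions $(\{A^{(n)}_k\}_k)_n$ of $\mathbb{X}$ with mesh tending to zero (available by compactness of $\mathbb{X}$) and representatives $x^{(n)}_k\in A^{(n)}_k$, I would form the atomic approximations $\mu^{(n)}_i:=\sum_k\mu_i(A^{(n)}_k)\delta_{x^{(n)}_k}$. A standard Riemann--Stieltjes-type estimate bounds $\bigl|\int f\,d\mu^{(n)}_i-\int f\,d\mu_i\bigr|$ by $|\mu_i|(\mathbb{X})$ times the modulus of continuity of $f$ at the mesh of the $n$-th partition, for every $f\in\mathcal{C}(\mathbb{X})$. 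Applied pathwise, uniform continuity of continuous sample paths on the compact $\mathbb{X}$ yields $L^{(n)}(\xi(\omega))\to L(\xi(\omega))$ almost surely, where $L^{(n)}$ is the functional built from the $\mu^{(n)}_i$. Each $L^{(n)}(\xi)$ is a finite linear combination of components of $\xi$ at finitely many points, hence Gaussian by the defining property of a multivariate Gaussian process; since almost-sure limits of Gaussian variables are Gaussian, $L(\xi)$ is Gaussian and $P^{\xi}$ is a Gaussian measure on $\mathcal{C}(\mathbb{X};\mathbb{R}^d)$.

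For the converse, given a Gaussian measure $\nu$ on $\mathcal{C}(\mathbb{X};\mathbb{R}^d)$, I would work on the canonical probability space $(\mathcal{C}(\mathbb{X};\mathbb{R}^d),\mathcal{B}(\mathcal{C}(\mathbb{X};\mathbb{R}^d)),\nu)$ and take the coordinate process $\xi(x,f):=f(x)=\delta_x(f)$. Sample paths are continuous by construction. For any $n\geq 1$, $x_1,\dots,x_n\in\mathbb{X}$ and $a_1,\dots,a_n\in\mathbb{R}^d$, the linear combination $\sum_k a_k^\top\xi(x_k,f)$ equals $L(f)$ with $L(f):=\sum_k a_k^\top f(x_k)\in\mathcal{C}(\mathbb{X};\mathbb{R}^d)^*$ and is Gaussian because $\nu$ is. All finite-dimensional distributions of $\xi$ are thus Gaussian, so $\xi$ is a multivariate Gaussian process with continuous paths and distribution $\nu$. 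Uniqueness in terms of $(m,k)$ follows because the mean function and matrix covariance function determine all finite-dimensional distributions, and the cylinder events $\{f:(f(x_1),\dots,f(x_n))\in B\}$ form a $\pi$-system that generates the Borel $\sigma$-algebra, so two Gaussian measures sharing $(m,k)$ must coincide.

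The main obstacle is the Gaussianity step in the forward direction: the finite-dimensional Gaussian assumption directly controls only linear combinations of point evaluations, whereas the dual of $\mathcal{C}(\mathbb{X};\mathbb{R}^d)$ delivered by Proposition~\ref{prop:2} consists of integrations against arbitrary finite signed Radon measures. Bridging the gap requires approximating such functionals by finite atomic ones in a way that converges pathwise on continuous functions, together with the closure of the Gaussian class under almost-sure limits. Compactness of $\mathbb{X}$ and uniform continuity of the sample paths are what allow the approximation to be controlled uniformly in $\omega$ and thereby transfer Gaussianity from the finite-dimensional setting to the full Banach-space functional.
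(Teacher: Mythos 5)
Your proof is correct and follows essentially the same route as the paper: measurability via the evaluation-generated Borel $\sigma$-algebra, Riesz representation of the dual, approximation of the integral functionals by finite linear combinations of point evaluations, finite-dimensional Gaussianity plus closure of the Gaussian class under almost-sure limits, and the converse together with uniqueness via coordinate evaluations and cylinder sets. The only difference is cosmetic: you build the atomic approximation explicitly from partitions with a modulus-of-continuity estimate, whereas the paper invokes weak convergence of Dirac combinations supported on a countable dense set to the Radon measures $\mu_i$; both exploit compactness of $\mathbb{X}$ and (uniform) continuity of the sample paths in the same way.
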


\begin{thm}
\label{thm:2}$\xi$ is a continuous multivariate Gaussian process
with zero mean function and covariance function $k:\mathbb{X}\times\mathbb{X}\rightarrow\mathbb{R}^{d\times d}$
if and only if $\xi$ is a centered Gaussian random element in the
separable Banach space $B$ with covariance operator $K_{\xi}:B^{*}\rightarrow B$.
The covariance operator can be derived from the covariance function
$k$ by 
\[
L\mapsto K_{\xi}L=\left[y\in\mathbb{X}\mapsto\left(\sum_{k=1}^{d}\int_{\mathbb{X}}k\left(x,y\right)_{kl}\mu_{k}\left(dx\right)\right)_{l=1,...,d}\right],
\]
where $\mu_{k}$ are the finite signed measures from Proposition \ref{prop:2}).
Given the covariance operator $K_{\xi}:B^{*}\rightarrow B$ we can
derive the covariance function by 
\[
\left(x,y\right)\mapsto\left(\left\langle K_{\xi}\delta_{x}^{k},\delta_{y}^{l}\right\rangle \right)_{k,l=1}^{d},
\]
where $\delta_{x}^{k}:=\pi_{k}\circ\delta_{x}$ and $\delta_{y}^{l}:=\pi_{l}\circ\delta_{y}$
for $x,y\in\mathbb{X}$ and $k,l\in\left\{ 1,...,d\right\} $ .
\end{thm}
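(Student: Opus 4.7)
The biconditional is essentially a restatement of Theorem~\ref{thm:1}: the equivalence between being a continuous centered multivariate Gaussian process and being a centered Gaussian random element in the separable Banach space $B:=\mathcal{C}(\mathbb{X};\mathbb{R}^d)$ is already established there, so the real content of the statement is the explicit correspondence between the matrix covariance function $k$ and the covariance operator $K_\xi$. The plan is to derive each of the two formulas in turn, starting from the identity $K_\xi L=\mathbb{E}[\langle\xi,L\rangle\xi]$ supplied by Proposition~\ref{prop:1}, combined with the identification of $B^*$ provided by Proposition~\ref{prop:2}.

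For the formula expressing $K_\xi$ in terms of $k$, I would fix $L\in B^*$ and use Proposition~\ref{prop:2} to write $L(f)=\sum_{k=1}^{d}\int_{\mathbb{X}}f_k(x)\,\mu_k(dx)$ for finite signed measures $\mu_1,\ldots,\mu_d$. Since $K_\xi L\in B$, evaluating its $l$-th component at $y\in\mathbb{X}$ gives
\[
(K_\xi L)(y)_l=\mathbb{E}[\langle\xi,L\rangle\,\xi_l(y)]=\mathbb{E}\left[\sum_{k=1}^{d}\int_{\mathbb{X}}\xi_k(x)\,\mu_k(dx)\,\xi_l(y)\right].
\]
The goal is then to push the expectation through the finite sum and the integrals, leaving $\sum_{k=1}^{d}\int_{\mathbb{X}}k(x,y)_{kl}\,\mu_k(dx)$. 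The Fubini interchange is legitimate because Fernique's theorem gives $\mathbb{E}\|\xi\|_\infty^2<\infty$ and each $|\mu_k|$ is a finite positive measure on $\mathbb{X}$, so that the joint integrand is absolutely integrable against $P\otimes\mu_k$.

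For the reverse formula, I would first observe that for each $x\in\mathbb{X}$ and $k\in\{1,\ldots,d\}$ the map $\delta_x^k=\pi_k\circ\delta_x$ is a bounded linear functional on $B$, since $|\delta_x^k(f)|=|f_k(x)|\leq\|f\|_\infty$. The definition of the covariance operator then yields $\langle K_\xi L_1,L_2\rangle=\mathbb{E}[\langle\xi,L_1\rangle\langle\xi,L_2\rangle]$ for any $L_1,L_2\in B^*$; applied with $L_1=\delta_x^k$ and $L_2=\delta_y^l$ this gives
\[
\langle K_\xi\delta_x^k,\delta_y^l\rangle=\mathbb{E}[\xi_k(x)\,\xi_l(y)]=k(x,y)_{kl},
\]
which is the claimed formula.

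The hardest part does not lie in either of these two computations, which are essentially routine once Proposition~\ref{prop:2} is in hand; it is the foundational work already absorbed into Theorem~\ref{thm:1}, namely the identification of a continuous multivariate Gaussian process with a Gaussian random element in $B$ with respect to the Borel $\sigma$-algebra. Once that identification and Fernique integrability are granted, the two formulas are just dual transcriptions of the same bilinear form $(L_1,L_2)\mapsto\mathbb{E}[\langle\xi,L_1\rangle\langle\xi,L_2\rangle]$, tested first against a general $L\in B^*$ in its integral representation and then against the coordinate evaluation functionals $\delta_x^k$.
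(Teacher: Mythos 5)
Your proposal is correct and follows essentially the same route as the paper: defer the process/random-element equivalence to Theorem \ref{thm:1}, then obtain both formulas from the bilinear form $\mathbb{E}\left[\left\langle \xi,L_{1}\right\rangle \left\langle \xi,L_{2}\right\rangle \right]$ via the representation of $B^{*}$ in Proposition \ref{prop:2} and the evaluation functionals $\delta_{x}^{k}$. The only cosmetic difference is that you test $K_{\xi}L$ directly against $\delta_{y}^{l}$ rather than against a general second functional $L'$ as the paper does, and you make the Fubini/Fernique justification explicit, which the paper leaves implicit.
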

\begin{proof}
The statement regarding Gaussian processes and Gaussian random elements in the Banach space $\mathcal{C}\left(\mathbb{X};\mathbb{R}^{d}\right)$
follows from Theorem \ref{thm:1}, so we focus on the connection
between the covariance function and the covariance operator. On the
one hand we have for $L,L'\in B^{*}$ by Proposition \ref{prop:2}
the existence of finite signed measures $\mu_{k}$ and $\mu'_{l}$
for $k,l\in\left\{ 1,...,d\right\} $ on $\mathbb{X}$ equipped with
the Borel $\sigma$-algebra such that

\begin{align*}
\text{\ensuremath{\left\langle K_{\xi}L,L'\right\rangle }} & =\mathbb{E}\left[\left\langle \xi,L\right\rangle \left\langle \xi,L'\right\rangle \right]\\
 & =\mathbb{E}\left[\left(\sum_{k=1}^{d}\int_{\mathbb{X}}\xi_{k}\left(x\right)\mu_{k}\left(dx\right)\right)\left(\sum_{l=1}^{d}\int_{\mathbb{X}}\xi_{l}\left(y\right)\mu'_{l}\left(dy\right)\right)\right]\\
 & =\mathbb{E}\left[\sum_{k,l=1}^{d}\int_{\mathbb{X}}\left(\int_{\mathbb{X}}\xi_{k}\left(x\right)\xi_{l}\left(y\right)\mu_{k}\left(dx\right)\right)\mu'_{l}\left(dy\right)\right]\\
 & =\sum_{k,l=1}^{d}\int_{\mathbb{X}}\left(\int_{\mathbb{X}}\mathbb{E}\left[\xi_{k}\left(x\right)\xi_{l}\left(y\right)\right]\mu_{k}\left(dx\right)\right)\mu'_{l}\left(dy\right)\\
 & =\sum_{k,l=1}^{d}\int_{\mathbb{X}}\left(\int_{\mathbb{X}}k\left(x,y\right)_{kl}\mu_{k}\left(dx\right)\right)\mu'_{l}\left(dy\right).
\end{align*}
Hence the covariance operator can be obtained from the covariance
function by integrating.

Let $\delta_{x},\delta_{y}$ with $x,y\in\b X$ be the continuous
evaluation maps defined by $\delta_{x}:B\rightarrow\b R^{d}$, $\delta_{x}\left(f\right)=f\left(x\right)$.
If we define $\delta_{x}^{k}:=\pi_{k}\circ\delta_{x}$ and $\delta_{y}^{l}:=\pi_{l}\circ\delta_{y}$
for $x,y\in\mathbb{X}$ and $k,l\in\left\{ 1,...,d\right\} $, this
yields that $\delta_{x}^{k}$ and $\delta_{y}^{l}$ are continuous
linear functions from $B$ to $\b R$, since the projection maps $\pi_{k}:X_{1}\times...\times X_{d}\rightarrow X_{k}$,
$\pi_{k}\left(x\right)=x_{k}$ on product spaces are linear and continuous.
Hence we conclude $\delta_{x}^{k},\delta_{y}^{l}\in B^{*}$ with 
\begin{align*}
\left\langle K_{\xi}\delta_{x}^{k},\delta_{y}^{l}\right\rangle  & =\mathbb{E}\left[\left\langle \xi,\delta_{x}^{k}\right\rangle \left\langle \xi,\delta_{y}^{l}\right\rangle \right]\\
 & =\mathbb{E}\left[\xi_{k}\left(x\right)\xi_{l}\left(y\right)\right]\\
 & =k\left(x,y\right)_{kl}.
\end{align*}
\end{proof}
\begin{rem}
The existence of versions of a Gaussian process $\xi$ with sample
path properties such as continuity are connected to the covariance
function $k(s,t)=\text{Cov}(\xi\left(s\right),\xi\left(t\right))$
of the process. This means not for every mean function $m$ and covariance
function $k$ we have the existence of a continuous multivariate Gaussian
process, not even when $m$ and $k$ are continuous. However, a criterion
for the existence of a continuous version is given by the Kolmogorov-Chentsov
Theorem: Assume $\mathbb{X}\subseteq\mathbb{R}^{d}$ and suppose for
the covariance function $k$ it holds 
\[
\E\left[\left\Vert \xi\left(x\right)-\xi\left(y\right)\right\Vert _{2}^{2}\right]=\sum_{i=1}^{d}\E\left[\left|\xi_{i}\left(x\right)-\xi_{i}\left(y\right)\right|^{2}\right]\leq\frac{C}{\left|\text{log}\left\Vert x-y\right\Vert \right|^{1+\gamma}},
\]
where $C>0,\gamma>0$ are some constants. Then there exists a Gaussian
process $\xi=\left(\xi_{1},...,\xi_{d}\right)$ with mean $m\equiv0$
and covariance function $k$ that is a stochastic process with continuous
sample paths. For a proof see Theorem 3.4.1 in \cite{key-13}. 
\end{rem}
\begin{lem}
\label{lem:2}Let $\xi$ be a multivariate Gaussian process with continuous
sample paths. Then the mean function $m$ and covariance function
$k$ are continuous. 
\end{lem}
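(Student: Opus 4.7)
The plan is to leverage the identification of $\xi$ with a Gaussian random element in $\mathcal{C}(\mathbb{X};\mathbb{R}^d)$ from Theorem \ref{thm:1}, together with Fernique's theorem, to deduce the required integrability for swapping limits and expectations.

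First I would handle the mean function. Since $\xi$ has continuous sample paths, Theorem \ref{thm:1} provides that $\xi$ is a Gaussian random element in $\left(\mathcal{C}(\mathbb{X};\mathbb{R}^d),\|\cdot\|_\infty\right)$, and Fernique's Theorem then gives $\mathbb{E}[\|\xi\|_\infty] < \infty$, so $\xi$ is Bochner-integrable and $\mathbb{E}[\xi] \in \mathcal{C}(\mathbb{X};\mathbb{R}^d)$ is well defined. For each $x\in\mathbb{X}$ and $k\in\{1,\dots,d\}$, the functional $\delta_x^k = \pi_k \circ \delta_x$ lies in $\mathcal{C}(\mathbb{X};\mathbb{R}^d)^*$ (as used in Theorem \ref{thm:2}), and the defining property of the Bochner integral yields $\langle \mathbb{E}[\xi], \delta_x^k \rangle = \mathbb{E}[\langle \xi, \delta_x^k \rangle] = \mathbb{E}[\xi_k(x)] = m(x)_k$. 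Hence $m = \mathbb{E}[\xi]$ as an element of $\mathcal{C}(\mathbb{X};\mathbb{R}^d)$, which directly gives continuity of $m$.

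For the covariance function $k$, I would proceed by a dominated convergence argument applied componentwise. Fix $(x,y)\in\mathbb{X}\times\mathbb{X}$ and let $(x_n,y_n)\to(x,y)$. For $i,j\in\{1,\dots,d\}$, continuity of the sample paths and of $m$ (just proved) imply
\[
\bigl(\xi_i(x_n)-m(x_n)_i\bigr)\bigl(\xi_j(y_n)-m(y_n)_j\bigr) \longrightarrow \bigl(\xi_i(x)-m(x)_i\bigr)\bigl(\xi_j(y)-m(y)_j\bigr)
\]
almost surely. The pointwise bound $|(\xi_i(x_n)-m(x_n)_i)(\xi_j(y_n)-m(y_n)_j)| \leq (\|\xi\|_\infty+\|m\|_\infty)^2$ provides the dominating variable; it has finite expectation because $\|m\|_\infty < \infty$ by compactness of $\mathbb{X}$ together with continuity of $m$, and $\mathbb{E}[\|\xi\|_\infty^2] < \infty$ by Fernique's Theorem (see \cite{key-2}, Theorem 8.2.1). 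Dominated convergence then yields $k(x_n,y_n)_{ij}\to k(x,y)_{ij}$, giving continuity of $k$.

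The main potential obstacle is securing the integrability needed for dominated convergence in the covariance step, since the natural dominating function involves $\|\xi\|_\infty^2$; this is resolved cleanly by Fernique's Theorem, which guarantees finiteness of all moments of $\|\xi\|_\infty$ for Gaussian random elements in a separable Banach space. Everything else reduces to routine applications of continuity of sample paths and of the evaluation functionals.
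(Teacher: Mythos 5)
Your proof is correct, but it follows a genuinely different route from the paper's. The paper argues entirely at the level of the finite-dimensional marginals: from sample-path continuity it gets $\xi_i(x_n)\rightarrow\xi_i(x)$ almost surely, invokes the classical fact that almost sure convergence of Gaussian random variables implies $L^2$ convergence (its Lemma 1 of the cited reference), and then obtains continuity of $m$ from $\left\Vert m(x_n)-m(x)\right\Vert_2^2\leq\sum_i\mathbb{E}\left[\left(\xi_i(x_n)-\xi_i(x)\right)^2\right]$ and continuity of $k$ component-wise via Cauchy--Schwarz applied to $\mathbb{E}\left[\xi(x_n)\xi(y_n)^{\top}\right]$. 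You instead pass through the Banach-space picture: Theorem \ref{thm:1} identifies $\xi$ as a Gaussian random element of $\mathcal{C}\left(\mathbb{X};\mathbb{R}^{d}\right)$, Fernique's theorem supplies $\mathbb{E}\left[\left\Vert \xi\right\Vert _{\infty}^{2}\right]<\infty$, the Bochner/Pettis identity $\left\langle \mathbb{E}\left[\xi\right],\delta_{x}^{k}\right\rangle =\mathbb{E}\left[\xi_{k}(x)\right]$ gives $m=\mathbb{E}\left[\xi\right]\in\mathcal{C}\left(\mathbb{X};\mathbb{R}^{d}\right)$ so continuity of $m$ is automatic, and dominated convergence with the dominating variable $\left(\left\Vert \xi\right\Vert _{\infty}+\left\Vert m\right\Vert _{\infty}\right)^{2}$ handles $k$. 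There is no circularity, since the paper's proof of Theorem \ref{thm:1} does not rely on Lemma \ref{lem:2}, and the functionals $\delta_{x}^{k}$ are indeed in the dual. What each approach buys: yours reuses machinery already developed in Section \ref{sec:2} (Gaussian random elements, Fernique, Bochner integrability) and makes continuity of the mean essentially tautological; the paper's is more elementary and self-contained, needing only one-dimensional Gaussian $L^2$-convergence and Cauchy--Schwarz, with no appeal to Fernique or to the dual-space structure.
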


\section{Conditioning on finitely many observations\label{sec:3}}

Now that we have the grounding, we can revisit the construction from \cite{key-1} around conditioning on finitely many
observations. Many properties carry over, but we still require
some careful thoughts in places. Especially Propositions \ref{prop:4}
and \ref{prop:5} need some additional arguments if the underlying
Gaussian process has values in $\mathbb{R}^{d}$. Proposition \ref{prop:5}
comes as a surprise and will be the reason that the sample criterion
has a more complicated discontinuity structure in the vector-valued case
than in the scalar-valued case. 
All profs has been moved to Appendix \ref{subsec:app:sec3}.

We will assume that
\begin{enumerate}
\item $\left(\mathbb{X},\mathfrak{d}\right)$ is a compact metric space,
\item $\xi=\left(\xi\left(x\right)\right)_{x\in\mathbb{X}}$ is a $d$-variate
Gaussian process on the probability space $\left(\Omega,\mathcal{F},P\right)$,
with mean function $m$ and covariance function $k$,
\item $\xi$ has continuous sample paths,
\end{enumerate}
and concentrate on the following model.

$\xi$ can be observed at sequentially selected design points $X_{1},X_{2,}...$
with additive independent heteroscedastic Gaussian noise. This means
pointwise observations $Z_{k}$ for $k=1,2,...$ are given by

\[
Z_{k}=\xi\left(X_{k}\right)+\tau\left(X_{k}\right)U_{k},
\]
where $\left(U_{k}\right)_{k\in\b N}$ denotes a sequence of independent
and $\c N_{d}\left(0,I_{d}\right)$-distributed random vectors, that
are also independent of $\xi$, and $\tau:\b X\rightarrow\Rdd$ denotes
a known continuous function. Then $\c T:\b X\rightarrow\Rdd$, $\c T\left(x\right):=\tau\left(x\right)\tau\left(x\right)^{\top}$
is also continuous and $\c T\left(x\right)$ is symmetric and positive
semi-definite for every $x\in\b X$.

Furthermore, we define the filtration $\left(\mathcal{F}_{n}\right)_{n\geq0}$
by 
\[
\mathcal{F}_{n}:=\sigma\left(\left\{ \bigcup_{i=1}^{n}\left(X_{i},Z_{i}\right)\right\} \right)
\]
for $n\geq1$ and set $\c F_{0}$ to be the trivial $\sigma$-algebra.
$\mathcal{F}_{n}$ is the $\sigma$-algebra generated by the first
$n$ sequential design points and $n$ according pointwise observations
$X_{1},Z_{1}$, $X_{2},Z_{2}$, ..., $X_{n},Z_{n}$ and we have $\mathcal{F}_{n}\subseteq\mathcal{F}_{m}$
for $n\leq m$. We finally define 
\[
\mathcal{F}_{\infty}:=\sigma\left(\bigcup_{n\geq1}\mathcal{F}_{n}\right)\subset\mathcal{F}.
\]

\begin{defn}
A sequence $\left(X_{n}\right)_{n\geq1}$ is called \emph{sequential
design} if $X_{n}$ is $\mathcal{F}_{n-1}$-measurable for all $n\geq1$.
\end{defn}
\begin{defn}
For $A\in\mathbb{R}^{n\times m}$ the (Moore-Penrose) pseudo-inverse
of $A$ is defined as the matrix $A^{\dagger}\in\mathbb{R}^{m\times n}$
satisfying the properties
\begin{enumerate}
\item $AA^{\dagger}A=A$ and $A^{\dagger}AA^{\dagger}=A^{\dagger}$, 
\item $\left(AA^{\dagger}\right)^{\top}=AA^{\dagger}$ and $\left(A^{\dagger}A\right)^{\top}=A^{\dagger}A$.
\end{enumerate}
\end{defn}
\begin{rem}
\begin{enumerate}
\item By the Theorem of Moore and Penrose the pseudo-inverse always exists
and is unique (see \cite{key-33}).
\item If $A$ is a square matrix with full rank then $A^{\dagger}=A^{-1}$
(see \cite{key-33}).
\item The mapping $A\mapsto A^{\dagger}$ is measurable (see \cite{key-6}).
\item In contrast to the usual matrix inversion mapping $A\mapsto A^{-1}$
for invertible matrices, the pseudo-inverse mapping $A\mapsto A^{\dagger}$
is in general not continuous. However, continuity of this mapping
is provided on sets with constant matrix rank. This means $A_{n}^{\dagger}\rightarrow A^{\dagger}$,
if $A_{n}\rightarrow A$ and there exists $n_{0}\in\b N$ such that
$\text{rank}\left(A_{n}\right)=\text{rank}\left(A\right)$ for all
$n\geq n_{0}$ (see \cite{key-8,key-7}).
\end{enumerate}
\end{rem}
\begin{thm}
\label{thm:3}For any $\xi\sim\mathcal{GP}_{d}\left(m,k\right)$ with
$\left(m,k\right)\in\Theta$, $\bm{X}_{n}=\left(X_{1},...,X_{n}\right)\in\mathbb{X}^{n}$,
$\bm{Z}_{n}=\left(Z_{1,}...,Z_{n}\right)\in\mathbb{R}^{d\times n}$,
as defined above, the conditional mean and covariance function of
$\xi$ given $\bm{Z}_{n}=\bm{z}_{n}$ and assuming a deterministic
design $\bm{X}_{n}=\bm{x}_{n}$ are given by 
\[
m_{n}(x;\bm{x}_{n},\bm{z}_{n})=m\left(x\right)+K\left(x,\bm{x}_{n}\right)\Sigma\left(\bm{x}_{n}\right){}^{\dagger}\left(\text{vec}\left(\bm{z}_{n}\right)-\text{vec}\left(m\left(\bm{x}_{n}\right)\right)\right),
\]
\[
k_{n}\left(x,y;\bm{x}_{n}\right)=k\left(x,y\right)-K\left(x,\bm{x}_{n}\right)\Sigma\left(\bm{x}_{n}\right){}^{\dagger}K\left(y,\bm{x}_{n}\right){}^{\top},
\]
where we define $\ensuremath{\Sigma}\left(x\right):=k(x,x)+\c T\left(x\right)$
for $x\in\b X$ and use the matrix convention
\begin{align*}
m\left(\bm{x}_{n}\right) & :=\left(\begin{array}{ccc}
m\left(x_{1}\right) & \cdots & m\left(x_{n}\right)\end{array}\right)\in\mathbb{R}^{d\times n},\\
K\left(x,\bm{x}_{n}\right) & :=\left(\begin{array}{ccc}
k\left(x,x_{1}\right) & \cdots & k\left(x,x_{n}\right)\end{array}\right)\in\mathbb{R}^{d\times nd},\\
\Sigma\left(\bm{x}_{n}\right) & :=K\left(\bm{x}_{n}\right)+\c T\left(\bm{x}_{n}\right)\in\mathbb{R}^{nd\times nd},
\end{align*}
with 
\[
K\left(\bm{x}_{n}\right):=\left(\begin{array}{cccc}
k(x_{1},x_{1}) & k(x_{1},x_{2}) & \cdots & k(x_{1},x_{n})\\
k(x_{2},x_{1}) & \ddots &  & \vdots\\
\vdots &  & \ddots & k(x_{n-1},x_{n})\\
k(x_{n},x_{1}) & \cdots & k(x_{n},x_{n-1}) & k(x_{n},x_{n})
\end{array}\right),
\]
\[
\c T\left(\bm{x}_{n}\right):=\left(\begin{array}{cccc}
\c T\left(x_{1}\right) & 0 & \cdots & 0\\
0 & \ddots &  & \vdots\\
\vdots &  & \ddots & 0\\
0 & \cdots & 0 & \c T\left(x_{n}\right)
\end{array}\right).
\]
\end{thm}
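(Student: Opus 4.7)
The plan is to derive both formulas from the classical Gaussian conditioning rule applied to the joint distribution of $\xi(x)$ (together with $\xi(y)$, for the covariance part) and the stacked observation vector $\text{vec}(\bm{Z}_n)$, while handling carefully the fact that $\Sigma(\bm{x}_n)$ can be singular. Such degeneracy occurs, for instance, when $\c T(x_i)$ vanishes and design points are repeated, or when the matrix covariance $k$ itself is rank-deficient at these sites; this is precisely why the Moore--Penrose pseudo-inverse appears in the formulas instead of the ordinary inverse.

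First I would assemble the joint distribution. Since $\xi$ is a $d$-variate Gaussian process and $(U_k)_{k\ge 1}$ is an independent sequence of $\c N_d(0,I_d)$-vectors, the stacked vector
\[
W(x,y):=\bigl(\xi(x)^\top,\,\xi(y)^\top,\,\text{vec}(\bm{Z}_n)^\top\bigr)^\top \in \b R^{d(n+2)}
\]
is Gaussian by the very definition of a multivariate Gaussian process. Reading off the mean and covariance block-wise from $m$, $k$ and the independence assumptions yields $\xi(x)$--$Z_j$ cross-covariance $k(x,x_j)$, stacked into $K(x,\bm{x}_n)$, and the $Z_i$--$Z_j$ block $k(x_i,x_j) + \delta_{ij}\c T(x_i)$, which is precisely $\Sigma(\bm{x}_n) = K(\bm{x}_n) + \c T(\bm{x}_n)$. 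I would then invoke the affine Gaussian conditioning formula, valid even when the conditioning covariance is degenerate: for jointly Gaussian $(A,B)$ with covariances $\Sigma_A,\Sigma_B,\Sigma_{AB}$, the law $\c L(A\mid B=b)$ is Gaussian with mean $\E[A] + \Sigma_{AB}\Sigma_B^\dagger(b-\E[B])$ and covariance $\Sigma_A - \Sigma_{AB}\Sigma_B^\dagger \Sigma_{AB}^\top$, provided the range inclusion $\mathrm{Range}(\Sigma_{AB}^\top)\subseteq \mathrm{Range}(\Sigma_B)$ holds. Applying this to $(A,B)=(\xi(x),\text{vec}(\bm{Z}_n))$ and then to $\bigl((\xi(x)^\top,\xi(y)^\top)^\top,\text{vec}(\bm{Z}_n)\bigr)$ gives the displayed expressions after identifying the blocks.

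The main obstacle is justifying the use of $\Sigma(\bm{x}_n)^\dagger$ in the singular case, i.e.\ verifying the range inclusion $\mathrm{Range}(K(x,\bm{x}_n)^\top)\subseteq \mathrm{Range}(\Sigma(\bm{x}_n))$. I would establish this directly: for any $v \in \b R^{nd}$ with $\Sigma(\bm{x}_n)v=0$, the scalar $v^\top(\text{vec}(\bm{Z}_n)-\text{vec}(m(\bm{x}_n)))$ is centered Gaussian with variance $v^\top\Sigma(\bm{x}_n)v=0$, hence $P$-a.s.\ zero, so its covariance with each $\xi_i(x)$ vanishes and yields $K(x,\bm{x}_n)v=0$. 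Passing to orthogonal complements and using symmetry of $\Sigma(\bm{x}_n)$ gives the range inclusion. Combined with the defining identities of the pseudo-inverse (notably $\Sigma^\dagger \Sigma$ being the orthogonal projector onto $\mathrm{Range}(\Sigma)$), one deduces $K(x,\bm{x}_n)\Sigma(\bm{x}_n)^\dagger\Sigma(\bm{x}_n)=K(x,\bm{x}_n)$. Hence the candidate $\hat\xi(x):=m(x)+K(x,\bm{x}_n)\Sigma(\bm{x}_n)^\dagger(\text{vec}(\bm{Z}_n)-\text{vec}(m(\bm{x}_n)))$ has the same cross-covariance with $\text{vec}(\bm{Z}_n)$ as $\xi(x)$; being jointly Gaussian and uncorrelated then makes $\xi(x)-\hat\xi(x)$ independent of $\text{vec}(\bm{Z}_n)$, which proves the mean formula. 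The conditional covariance follows by an analogous $L^2$-orthogonality computation. As an alternative route one can regularise with $\Sigma(\bm{x}_n) + \varepsilon I$, apply the non-degenerate formula, and pass to the limit after a simultaneous diagonalisation on the fixed-rank stratum where the pseudo-inverse is continuous.
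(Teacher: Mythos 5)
Your proposal is correct and follows essentially the same route as the paper: form the joint Gaussian vector $\bigl(\xi(x)^{\top},\xi(y)^{\top},\operatorname{vec}(\bm{Z}_{n})^{\top}\bigr)^{\top}$, read off the blocks $K(x,\bm{x}_{n})$ and $\Sigma(\bm{x}_{n})=K(\bm{x}_{n})+\mathcal{T}(\bm{x}_{n})$, and apply Gaussian conditioning with the Moore--Penrose pseudo-inverse, exactly as the paper does when it refers back to this argument in the proof of Lemma \ref{lem:3}. Your explicit verification of the range inclusion $\mathrm{Range}\bigl(K(x,\bm{x}_{n})^{\top}\bigr)\subseteq\mathrm{Range}\bigl(\Sigma(\bm{x}_{n})\bigr)$ (equivalently $\ker\Sigma(\bm{x}_{n})\subseteq\ker K(x,\bm{x}_{n})$), which legitimises the pseudo-inverse in the degenerate case, is a welcome extra dose of care rather than a deviation.
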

\begin{rem}
\begin{enumerate}
\item For the case $d=1$ the  formula reduces to the form
\[
m_{n}\left(x;\bm{x}_{n},\bm{z}_{n}\right)=m\left(x\right)+K\left(x,\bm{x}_{n}\right)\Sigma\left(\bm{x}_{n}\right){}^{\dagger}\left(\bm{z}_{n}-m\left(\bm{x}_{n}\right)\right)^{\top},
\]
\[
k_{n}\left(x,y;\bm{x}_{n}\right)=k\left(x,y\right)-K\left(x,\bm{x}_{n}\right)\Sigma\left(\bm{x}_{n}\right){}^{\dagger}K\left(y,\bm{x}_{n}\right){}^{\top}
\]
with $\Sigma\left(\bm{x}_{n}\right)=\left(k\left(x_{i},x_{j}\right)+\mathcal{T}(x_{i})\delta_{i,j}\right)_{1\leq i,j\leq n}\in\mathbb{R}^{n\times n}$.
Hence the expressions for $m_{n}$ and $k_{n}$ are consistent with
the ones for $\b R$-valued Gaussian processes as in \cite{key-1}.
\item Conditionally to $\c F_{n}$, the next observation $Z_{n+1}$ follows
a multivariate normal distribution: $Z_{n+1}|\mathcal{F}_{n}\sim\c N_{d}\left(m_{n}\left(X_{n+1}\right),\Sigma_{n}\left(X_{n+1}\right)\right)$,
where 
\[\Sigma_{n}\left(x\right):=k_{n}\left(x,x\right)+\c T\left(x\right).\]
\item For the case $n=0$ we have $\Sigma\left(x\right)=\Sigma_{0}\left(x\right)=k\left(x,x\right)+\mathcal{T}\left(x\right)\in\mathbb{R}^{d\times d}$
as expected for our model defined above. 
\item The conditional mean $m_{n}$ (or posterior mean in Bayesian statistics)
is also called Kriging predictor or rather co-Kriging in our case,
since we have the joint Kriging of multiple data inputs. For the univariate
response setting, there exists abundant literature to it (see \cite{key-40,key-38,key-41}
and the references therein). For the multivariate response see also
\cite{key-12,key-30}.
\end{enumerate}
\end{rem}
As shown in Section \ref{sec:2} we can think of the multivariate
Gaussian process $\xi$ as a Gaussian random element in the separable
Banach Space 
\[
\mathbb{S}:=\mathcal{C}\left(\mathbb{X};\mathbb{R}^{d}\right)
\]
equipped with the supremum (or uniform) norm 
\[
\left\Vert f\right\Vert _{\infty}:=\sup_{x\in\mathbb{X}}\left\Vert f(x)\right\Vert _{\max}
\]
and Borel $\sigma$-algebra 
\[
\c S:=\mathcal{B}\left(\mathbb{S}\right).
\]
Let furthermore $\mathbb{M}$ be the \emph{space of Gaussian measures
on $\mathbb{S}$}. We equip $\mathbb{M}$ with the $\sigma$-algebra
$\mathcal{M}$ generated by the evaluation maps $\pi_{A}:\nu\rightarrow\nu\left(A\right)$
for $A\in\mathcal{S}$. As proven in Theorem \ref{thm:1} we have
the following connections:

Any measure $\nu\in\mathbb{M}$ corresponds to the distribution $P^{\xi}$
of some continuous multivariate Gaussian Process $\xi$ with mean
$m$ and covariance function $k$. Hence we can write 
\[
\nu=P^{\xi}=\mathcal{GP}_{d}\left(m,k\right),
\]
 since the distribution is uniquely determined by $m$ and $k$.

On the other hand the probability distribution $P^{\xi}=\mathcal{GP}_{d}\left(m,k\right)$
of some continuous multivariate Gaussian process is a Gaussian measure
on $\mathbb{S}$ and hence an element in $\mathbb{M}$. 
\begin{defn}
Given a Gaussian random element $\xi$ in $\mathbb{S}$, we will denote
by $\mathfrak{P}\left(\xi\right)$ the \emph{set of all Gaussian conditional
distributions of $\xi$. }That is the set of Gaussian random measures
$\bm{\nu}$ such that $\bm{\nu}=P\left(\xi\in\cdot|\mathcal{F}'\right)$
for some $\sigma$-algebra $\mathcal{F}'\subset\mathcal{F}$.
\end{defn}
\begin{rem}
\begin{enumerate}
\item Note that we use a bold letter $\bm{\nu}$ to denote a random element
in $\mathbb{M}$ (Gaussian random measure) and a normal letter $\nu$
to denote a point in the space $\mathbb{M}$ (Gaussian measure). 
\item $\bm{\nu}=P\left(\xi\in\cdot|\mathcal{F}'\right)$ is not necessarily
Gaussian for an arbitrary $\sigma$-algebra $\mathcal{F}'\subset\mathcal{F}$
and hence not always a random element in $\mathbb{M}$ . However,
the next Proposition shows that it holds for the $\sigma$-algebra
$\mathcal{F}_{n}$ generated by a sequential design with corresponding pointwise observations.
\end{enumerate}
\end{rem}
\begin{prop}
\label{prop:3}For all $n\geq1$, there exists a measurable mapping
\begin{align*}
\mathbb{X}^{n}\times\mathbb{R}^{d\times n}\times\mathbb{M} & \rightarrow\mathbb{M}\\
\left({\bf x}_{n},{\bf z}_{n},\nu\right) & \mapsto\text{Cond}{}_{x_{1},z_{1},...,x_{n},z_{n}}\left(\nu\right),
\end{align*}
where we write ${\bf x}_{n}=\left(x_{1},...,x_{n}\right)$ and ${\bf z}_{n}=\left(z_{1},...,z_{n}\right)$,
such that for any $\nu=P^{\xi}\in\mathbb{M}$ and any sequential design
$\left(X_{n}\right)_{n\geq1}$ with pointwise observations $\left(Z_{n}\right)_{n\geq1}$
the Gaussian random measure $\text{Cond}{}_{X_{1},Z_{1},...,X_{n},Z_{n}}\left(P^{\xi}\right)$
is a conditional distribution of $\xi$ given the $\sigma$-algebra
$\mathcal{F}_{n}$ generated by the pointwise observations $X_{1},Z_{1},...,X_{n},Z_{n}$.
We will denote $\text{Cond}{}_{X_{1},Z_{1},...,X_{n},Z_{n}}\left(P^{\xi}\right)$
by $P_{n}^{\xi}$.
\end{prop}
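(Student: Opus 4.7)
The plan is to take the explicit formulas from Theorem~\ref{thm:3} as the definition of the mapping and then check (i) well-definedness, (ii) measurability, and (iii) the regular conditional distribution property. Concretely, for $\nu = \mathcal{GP}_d(m,k) \in \mathbb{M}$, $\bm{x}_n = (x_1,\ldots,x_n) \in \mathbb{X}^n$ and $\bm{z}_n = (z_1,\ldots,z_n) \in \mathbb{R}^{d\times n}$, I would set
\[
\text{Cond}_{\bm{x}_n,\bm{z}_n}(\nu) := \mathcal{GP}_d\bigl(m_n(\cdot;\bm{x}_n,\bm{z}_n),\, k_n(\cdot,\cdot;\bm{x}_n)\bigr).
\]
Well-definedness follows by noting that continuity of $m,k$ (Lemma~\ref{lem:2}) and of $\mathcal{T}$, combined with the closed form from Theorem~\ref{thm:3}, gives continuity of $m_n$ in its first argument and of $k_n$ in both arguments, while the symmetry and positive semi-definiteness of $k_n$ are inherited from those of $k$ through the pseudo-inverse of a symmetric positive semi-definite matrix. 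Theorem~\ref{thm:1} then produces a continuous $d$-variate Gaussian process with parameters $(m_n,k_n)$, whose distribution is a genuine element of $\mathbb{M}$.

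Next I would check measurability of $(\bm{x}_n,\bm{z}_n,\nu)\mapsto \text{Cond}_{\bm{x}_n,\bm{z}_n}(\nu)$ with respect to $\mathcal{M}$. Since $\mathcal{M}$ is generated by the evaluation maps $\pi_A$, $A\in\mathcal{S}$, and since $\mathcal{S}=\mathcal{B}(\mathbb{S})=\sigma(\{\delta_x:x\in\mathbb{X}\})$, it suffices to show that for every finite collection of points $y_1,\ldots,y_r\in\mathbb{X}$ and every Borel set $B\subset\mathbb{R}^{dr}$, the mapping $(\bm{x}_n,\bm{z}_n,\nu)\mapsto \text{Cond}_{\bm{x}_n,\bm{z}_n}(\nu)\bigl(\{f:(f(y_1),\ldots,f(y_r))\in B\}\bigr)$ is measurable. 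This probability equals a $dr$-dimensional Gaussian mass whose mean vector and covariance matrix are polynomial expressions in the entries of $m(y_j)$, $m(x_i)$, $k(y_j,x_i)$, $k(x_i,x_{i'})$, $\mathcal{T}(x_i)$, $\bm{z}_n$, and in the entries of $\Sigma(\bm{x}_n)^{\dagger}$. The map $\nu \mapsto (m,k)$ evaluated at finitely many points is $\mathcal{M}$-measurable by the standard characterisation of Gaussian measures through finite-dimensional marginals, continuity takes care of the $(\bm{x}_n,\bm{z}_n)$ dependence, and item~(3) of the remark on the pseudo-inverse handles $\Sigma \mapsto \Sigma^{\dagger}$ despite its rank-induced discontinuities. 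Finally, the Gaussian probability of a Borel set is a measurable function of its mean and covariance.

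It then remains to show that for any $\nu = P^\xi$ and any sequential design with associated observations, $\text{Cond}_{X_1,Z_1,\ldots,X_n,Z_n}(P^\xi)$ is a version of $P(\xi\in\cdot\mid\mathcal{F}_n)$. I would argue by induction on $n$. In the deterministic case $\bm{X}_n=\bm{x}_n$, joint Gaussianity of $(\xi,Z_1,\ldots,Z_n)$ together with the multivariate Gaussian conditioning formulas gives exactly the expressions of Theorem~\ref{thm:3} for all finite-dimensional conditional distributions; since $\mathcal{S}$ is generated by evaluations, this uniquely determines the conditional measure as $\text{Cond}_{\bm{x}_n,\bm{z}_n}(P^\xi)$. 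For the inductive step in the sequential case, the induction hypothesis yields $P(\xi\in\cdot\mid\mathcal{F}_n) = \text{Cond}_{X_1,Z_1,\ldots,X_n,Z_n}(P^\xi)$; since $X_{n+1}$ is $\mathcal{F}_n$-measurable and $U_{n+1}$ is independent of $\mathcal{F}_n\vee\sigma(\xi)$, conditioning on $\mathcal{F}_n\vee\sigma(X_{n+1},Z_{n+1})=\mathcal{F}_{n+1}$ reduces, after freezing $X_{n+1}=x$, to one further deterministic-design conditioning of the Gaussian process $\xi\mid\mathcal{F}_n$ on the single noisy observation $Z_{n+1}$, which again by Theorem~\ref{thm:3} gives $\text{Cond}_{X_{n+1},Z_{n+1}}\!\bigl(P_n^\xi\bigr)$; measurability of Cond (step~2) is exactly what allows this deterministic argument to be reinserted at the random value $X_{n+1}$. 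The main obstacle will be this last conversion from the deterministic-design conditioning formulas to the sequential case, and in particular ensuring that the pseudo-inverse-induced discontinuities of $\text{Cond}$ do not disrupt its measurability; this is precisely where item~(3) of the pseudo-inverse remark does the heavy lifting.
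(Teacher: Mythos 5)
Your overall route coincides with the paper's: define $\text{Cond}_{\mathbf{x}_n,\mathbf{z}_n}(\nu)=\mathcal{GP}_d\left(m_n(\cdot;\mathbf{x}_n,\mathbf{z}_n),k_n(\cdot,\cdot;\mathbf{x}_n)\right)$ via Theorem~\ref{thm:3}, get measurability from the measurable parameter maps together with the measurability of $A\mapsto A^{\dagger}$, and verify the conditioning property by reducing the sequential case to deterministic one-step updates (the paper does this by checking $\mathbb{E}\left[UP_{n}^{\xi}(\Gamma)\right]=\mathbb{E}\left[U\mathbf{1}_{\xi\in\Gamma}\right]$ for $U=\prod_i\varphi_i(Z_i)$ and cylinder sets $\Gamma$, extending by Dynkin's $\pi$-$\lambda$ theorem, and using the composition identity $\kappa_{n+m}(\mathbf{x}_{n+m},\mathbf{z}_{n+m},\nu)=\kappa_m(\mathbf{x}_{n+1:n+m},\mathbf{z}_{n+1:n+m},\kappa_n(\mathbf{x}_n,\mathbf{z}_n,\nu))$, which is the rigorous version of your ``freezing $X_{n+1}=x$'' step).

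There is, however, one step that fails as you state it: well-definedness. You argue that continuity of $m_n,k_n$ and symmetry plus positive semi-definiteness of $k_n$ let Theorem~\ref{thm:1} ``produce'' a continuous $d$-variate Gaussian process with parameters $(m_n,k_n)$, hence an element of $\mathbb{M}$. Theorem~\ref{thm:1} does not deliver this: it only identifies \emph{already existing} continuous Gaussian processes with Gaussian measures on $\mathcal{C}(\mathbb{X};\mathbb{R}^d)$, and the remark following Theorem~\ref{thm:2} stresses explicitly that a continuous mean and a continuous positive semi-definite covariance need not admit any continuous version. So continuity and positive semi-definiteness alone do not show that the pair $(m_n,k_n)$ is attained by some measure in $\mathbb{M}$, i.e.\ that your map lands in $\mathbb{M}$ at all. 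The paper closes exactly this point (Lemma~\ref{lem:12}) by exhibiting the conditioned law directly: for a deterministic design the residual process $x\mapsto\xi(x)-m(x)-K\left(x,\mathbf{x}_n\right)\Sigma\left(\mathbf{x}_n\right)^{\dagger}\left(\mathrm{vec}(\mathbf{Z}_n)-\mathrm{vec}\left(m\left(\mathbf{x}_n\right)\right)\right)$ is a pathwise-continuous centered multivariate Gaussian process whose covariance function is precisely $k_n(\cdot,\cdot;\mathbf{x}_n)$, so that $(m_n,k_n)$ lies in the range $\Theta$ of $\nu\mapsto(m_\nu,k_\nu)$ and $\mathcal{GP}_d(m_n,k_n)\in\mathbb{M}$. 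You need this (or an equivalent explicit representation); a Kolmogorov--Chentsov criterion is not available, since nothing guarantees $k_n$ satisfies one. A second, more minor gloss: the $\mathcal{M}$-measurability of $\nu\mapsto m_\nu(y)$ and $\nu\mapsto k_\nu(y,y')$ is not immediate from the definition of $\mathcal{M}$; it requires the short monotone-class/Fernique argument the paper carries out for the map $\Psi=(m_{\bullet},k_{\bullet})$.
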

\begin{rem}
For any $\xi\sim\mathcal{GP}_{d}\left(m,k\right)$ and $\sigma$-algebra
$\mathcal{F}_{n}$, generated by finite observations $X_{1},Z_{1},...,X_{n},Z_{n}$,
we have
\[
P_{n}^{\xi}:=\mathcal{GP}_{d}\left(m_{n},k_{n}\right)=\text{Cond}{}_{X_{1},Z_{1},...,X_{n},Z_{n}}\left(P^{\xi}\right)=P\left(\xi\in\cdot|\mathcal{F}_{n}\right),
\]
which can be seen as a $\mathcal{F}_{n}$-measurable random element
in $\left(\mathbb{M},\mathcal{M}\right)$ by Proposition \ref{prop:3}
and hence $P_{n}^{\xi}\in\mathfrak{P}\left(\xi\right)$ for all $n\in\b N$.
The $\c F_{n}$-measurable (and random) conditional mean function
$m_{n}$ and conditional covariance function $k_{n}$ are given as
in Theorem \ref{thm:3}.
\end{rem}
\begin{defn}
Let $\left(\nu_{n}=\mathcal{GP}_{d}\left(m_{n},k_{n}\right)\right)_{n\geq1}$
be a sequence of Gaussian measures in $\mathbb{M}$. We will say that
$\left(\nu_{n}\right)_{n\geq1}$ converges to $\nu_{\infty}\in\mathbb{M}$
if
\[
m_{n}\rightarrow m_{\infty}\quad\text{in}\quad\mathcal{C}\left(\mathbb{X};\mathbb{R}^{d}\right),
\]
\[
k_{n}\rightarrow k_{\infty}\quad\text{in}\quad\mathcal{C}\left(\mathbb{X}\times\mathbb{X};\mathbb{R}^{d\times d}\right)
\]
with respect to the corresponding supremum norms $\left\Vert \cdot\right\Vert _{\infty}$
on the function spaces. Notation: $\nu_{n}\rightarrow\nu_{\infty}$.
\end{defn}
\begin{rem}
The above notion of convergence is weaker than weak convergence of
measures. Indeed, for weak convergence we would also need that the
sequence $\left(\nu_{n}\right)_{n\geq1}$ is tight (see \cite{key-21}),
which is not the case in general. 
\end{rem}
\begin{prop}
\label{prop:4}Let $\mathcal{F}_{\infty}$ be the $\sigma$-algebra
generated by $\bigcup_{n\geq1}\mathcal{F}_{n}$. For any Gaussian
random element $\xi$ in $\mathbb{S}$, defined on any probability
space $\left(\Omega,\mathcal{F},\mathcal{P}\right)$, and for any
sequential design $\left(X_{n}\right)_{n\geq1}$, the conditional
distribution of $\xi$ given $\mathcal{F}_{\infty}$ admits a version
$P_{\infty}^{\xi}$ which is an $\mathcal{F}_{\infty}$-measurable
random element in $\mathbb{M}$, and it holds
\[
P_{n}^{\xi}\xrightarrow[n\rightarrow\infty]{a.s.}P_{\infty}^{\xi}.
\]
\end{prop}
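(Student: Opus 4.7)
The plan is to define $P_\infty^\xi := \mathcal{GP}_d(m_\infty,k_\infty)$ via almost-sure limits $m_n \to m_\infty$ in $\mathcal{C}(\mathbb{X};\mathbb{R}^d)$ and $k_n \to k_\infty$ in $\mathcal{C}(\mathbb{X}\times\mathbb{X};\mathbb{R}^{d\times d})$, and then to identify this Gaussian random measure with a version of $P(\xi \in \cdot \mid \mathcal{F}_\infty)$. Measurability of $P_\infty^\xi$ as a random element in $(\mathbb{M},\mathcal{M})$ will follow from the measurability of the pair $(m_\infty,k_\infty)$, itself inherited from the $\mathcal{F}_n$-measurability of $(m_n,k_n)$ under almost-sure convergence.

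For the mean, I would view $\xi$ as a random element in the separable Banach space $\mathbb{S}$. Since Fernique's theorem guarantees $\mathbb{E}\|\xi\|_\infty < \infty$, $\xi$ is Bochner integrable and the sequence $m_n = \mathbb{E}[\xi \mid \mathcal{F}_n]$ is a closed $\mathbb{S}$-valued martingale. The Banach-space martingale convergence theorem (see e.g.\ \cite{key-34}) then yields $m_n \to m_\infty := \mathbb{E}[\xi \mid \mathcal{F}_\infty]$ almost surely in sup norm, which is exactly the first half of the convergence definition.

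For the covariance I would apply the analogous strategy to the auxiliary random element $\eta := \xi \otimes \xi \in \mathcal{C}(\mathbb{X}\times\mathbb{X};\mathbb{R}^{d\times d})$ defined by $\eta(x,y) = \xi(x)\xi(y)^\top$. Continuity of the sample paths places $\eta$ in this Banach space; the bound $\|\eta\|_\infty \leq \|\xi\|_\infty^2$ together with Fernique yields Bochner integrability, so $\mathbb{E}[\eta \mid \mathcal{F}_n] \to \mathbb{E}[\eta \mid \mathcal{F}_\infty]$ almost surely in sup norm. Since $P_n^\xi = \mathcal{GP}_d(m_n,k_n)$, Gaussianity gives the identity $\mathbb{E}[\eta(x,y) \mid \mathcal{F}_n] = k_n(x,y) + m_n(x)m_n(y)^\top$ uniformly in $(x,y)$. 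Solving for $k_n$ and using that $(f,g) \mapsto f \otimes g$ is continuous on sup-norm-bounded sets then produces $k_n \to k_\infty := \mathbb{E}[\eta \mid \mathcal{F}_\infty] - m_\infty \otimes m_\infty$ almost surely in sup norm.

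Finally, to identify $\mathcal{GP}_d(m_\infty,k_\infty)$ with the conditional distribution given $\mathcal{F}_\infty$, I fix $L \in \mathbb{S}^*$ with its measure representation from Proposition \ref{prop:2}. From $\xi \mid \mathcal{F}_n \sim P_n^\xi$ one has
\[
\mathbb{E}\!\left[e^{i L(\xi)} \mid \mathcal{F}_n\right] = \exp\!\left(i L(m_n) - \tfrac{1}{2}\langle K_{k_n}L,L\rangle\right) \quad \text{a.s.},
\]
with the quadratic term expressible, by Theorem \ref{thm:2}, as a double integral of $k_n$ against the representing measures. The left-hand side converges in $L^1$ to $\mathbb{E}[e^{iL(\xi)} \mid \mathcal{F}_\infty]$ by Doob's theorem for bounded martingales, while the right-hand side converges almost surely to $\exp(i L(m_\infty) - \tfrac{1}{2}\langle K_{k_\infty}L,L\rangle)$ by the uniform convergences of $m_n,k_n$ combined with bounded convergence in the integrals. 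Matching the two limits for every $L \in \mathbb{S}^*$ characterizes the conditional distribution as Gaussian with parameters $(m_\infty,k_\infty)$, which completes the identification. The step I expect to be the main obstacle is the covariance convergence: the direct positive-semi-definite monotonicity argument that is natural in the scalar case of \cite{key-1} does not translate transparently for matrix-valued $k_n$, which is why I route through the tensor product $\xi \otimes \xi$ to reduce the problem back to a Banach-space martingale convergence in the appropriate matrix-valued function space.
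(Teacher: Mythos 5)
Your proposal is correct and follows essentially the same route as the paper: Fernique plus Banach-space (L\'evy) martingale convergence in $\mathbb{S}$ for $m_n$, the auxiliary element $\xi\otimes\xi$ in $\mathcal{C}\left(\mathbb{X}\times\mathbb{X};\mathbb{R}^{d\times d}\right)$ for $k_n$, and the decomposition $k_n(x,y)=\mathbb{E}\left[\xi(x)\xi(y)^{\top}|\mathcal{F}_n\right]-m_n(x)m_n(y)^{\top}$. The only cosmetic difference is the final identification: the paper takes an arbitrary version $Q$ of $P\left(\xi\in\cdot|\mathcal{F}_{\infty}\right)$ and shows it is almost surely Gaussian (modifying on a null set), whereas you match conditional characteristic functionals for each $L\in\mathbb{S}^{*}$; your variant works but should be run over a countable determining family of functionals (e.g.\ built from evaluations on a countable dense subset of $\mathbb{X}$) so that the $L$-dependent null sets can be collected into one.
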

\begin{prop}
\label{prop:5}Let $\nu=\mathcal{GP}_{d}\left(m_{\nu},k_{\nu}\right)\in\mathbb{M}$,
$\Sigma_{\nu}\left(x\right):=k_{\nu}\left(x,x\right)+\c T\left(x\right)\in\Rdd$
and assume $\left(x_{i},z_{i}\right)\xrightarrow{}\left(x,z\right)$
as $i\rightarrow\infty$ in $\mathbb{X}\times\mathbb{R}^{d}$ with $x_{i},x$ in the set 
\[
C_{k}:=\left\{ x\in\mathbb{X}:\text{rank}\left(\Sigma_{\nu}\left(x\right)\right)=k\right\}
\]
and $z_{i},z\in\mathbb{R}^{d}$ for $i\in\mathbb{N}$ and $k\in\left\{ 1,...,d\right\} $.
Then we have
\[
\text{Cond}{}_{x_{i},z_{i}}\left(\nu\right)\xrightarrow[i\rightarrow\infty]{}\text{Cond}{}_{x,z}\left(\nu\right).
\]
\end{prop}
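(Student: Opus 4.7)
The plan is to invoke the explicit formulas from Theorem \ref{thm:3} applied with $n=1$, and then to reduce the required convergence on $\mathbb{M}$ to the convergence of the pseudo-inverses $\Sigma_\nu(x_i)^{\dagger}$ to $\Sigma_\nu(x)^{\dagger}$ combined with standard uniform-continuity arguments on a compact set. Writing $\text{Cond}_{x_i, z_i}(\nu) = \mathcal{GP}_d(m^i, k^i)$ and $\text{Cond}_{x, z}(\nu) = \mathcal{GP}_d(m^\infty, k^\infty)$, Theorem \ref{thm:3} gives
\[
m^i(y) = m_\nu(y) + k_\nu(y, x_i)\, \Sigma_\nu(x_i)^{\dagger}\bigl(z_i - m_\nu(x_i)\bigr),
\]
\[
k^i(y, y') = k_\nu(y, y') - k_\nu(y, x_i)\, \Sigma_\nu(x_i)^{\dagger}\, k_\nu(y', x_i)^{\top},
\]
and analogous formulas at $(x, z)$ for $m^\infty$ and $k^\infty$. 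According to the definition of convergence in $\mathbb{M}$ recalled just before the proposition, the target is then $\|m^i - m^\infty\|_\infty \to 0$ in $\mathcal{C}(\mathbb{X}; \mathbb{R}^d)$ and $\|k^i - k^\infty\|_\infty \to 0$ in $\mathcal{C}(\mathbb{X}\times\mathbb{X}; \mathbb{R}^{d\times d})$.

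The decisive step is the convergence of the pseudo-inverses, and this is precisely what the constant-rank hypothesis is designed to enable. Since $\mathcal{T}$ is continuous by the modelling assumption and $k_\nu$ is continuous by Lemma \ref{lem:2}, the map $x \mapsto \Sigma_\nu(x) = k_\nu(x,x) + \mathcal{T}(x)$ is continuous on $\mathbb{X}$, hence $\Sigma_\nu(x_i) \to \Sigma_\nu(x)$ in $\mathbb{R}^{d\times d}$. Because $x_i, x \in C_k$ for every $i$, every matrix in the sequence has the same rank $k$ as the limit, so the constant-rank continuity of the Moore--Penrose inverse recalled in the remark following its definition yields $\Sigma_\nu(x_i)^{\dagger} \to \Sigma_\nu(x)^{\dagger}$.

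With this in hand, uniform continuity finishes the job. Compactness of $\mathbb{X}\times\mathbb{X}$ together with continuity of $k_\nu$ gives $\sup_{y \in \mathbb{X}} \|k_\nu(y, x_i) - k_\nu(y, x)\| \to 0$ in any matrix norm, and the same with $y$ replaced by $y'$; the supremum $\sup_{y \in \mathbb{X}} \|k_\nu(y, x)\|$ is finite, and $z_i - m_\nu(x_i) \to z - m_\nu(x)$. An add-and-subtract decomposition of $m^i(y) - m^\infty(y)$ into three terms, controlling respectively the variation of $k_\nu(\cdot, x_i)$, of $\Sigma_\nu(\cdot)^{\dagger}$ and of $z_i - m_\nu(x_i)$, drives each piece to zero uniformly in $y$. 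An analogous triple decomposition of the difference of the trilinear expressions handles $k^i - k^\infty$ uniformly on $\mathbb{X}\times\mathbb{X}$. The main obstacle is solely the pseudo-inverse step: the hypothesis $x_i, x \in C_k$ is exactly what prevents the discontinuity of $A \mapsto A^{\dagger}$ at rank jumps from breaking the argument, and this is the rank-constancy phenomenon flagged in the discussion opening Section \ref{sec:3}.
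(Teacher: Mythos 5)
Your proposal is correct and follows essentially the same route as the paper's proof: both apply the explicit $n=1$ formulas from Theorem \ref{thm:3}, use the uniform continuity of $m_{\nu}$ and $k_{\nu}$ (Lemma \ref{lem:2} plus compactness of $\mathbb{X}$ and $\mathbb{X}\times\mathbb{X}$), and invoke the constant-rank continuity of the Moore--Penrose pseudo-inverse guaranteed by the hypothesis $x_{i},x\in C_{k}$. Your write-up is in fact slightly more explicit than the paper's (spelling out the add-and-subtract decompositions and noting that the pseudo-inverse convergence also enters the conditional mean), but the underlying argument is the same.
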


\begin{rem}
    The above Proposition illustrates an important difficulty that arises when one turns from a regular Gaussian process in $\mathcal{GP}\left(m,k\right)$ to a multivariate Gaussian process in $\mathcal{GP}_{d}\left(m,k\right)$. 
    For a Gaussian process in $\mathcal{GP}\left(m,k\right)$ and just one observation $({\bf x}_{1}, {\bf z}_{1}) = (x,z)$, the convergence mentioned above only depends on the inverse of a scalar, whereas for multivariate Gaussian processes in $\mathcal{GP}_{d}\left(m,k\right)$ we have already to deal with matrix inversion. This yields some problems for the limits as $x_i \rightarrow x$.
    
    However, this is an important difference to the limits of the co-Kriging $m_n$ and conditional variance $k_n$ that are taken on the observations $({\bf x}_{n}, {\bf z}_{n})$ as $n \rightarrow \infty$, where we can use more abstract interpretations of both functions and do not have to rely on the explicit form as in Theorem \ref{thm:3}.
\end{rem}

\section{SUR sequential design and its existence in the multivariate setting\label{sec:4}}

We will start by recalling some definitions from \cite{key-1} that
can instantly be extended to our multivariate setting, since they
only depend on the space $\mathbb{M}$ of Gaussian measures on $\mathbb{S}=\mathcal{C}\left(\mathbb{X};\mathbb{R}^{d}\right)$.
However, the existence of SUR sequential design in the multivariate
case comes with some pitfalls caused by Proposition \ref{prop:5}
in the previous section. We will nevertheless prove that under some
special assumptions the SUR sequential design indeed exists. All proofs
have been moved to Appendix \ref{subsec:app:sec4}.
\begin{defn}
An uncertainty functional on $\mathbb{M}$ is a measurable
function 
\[
\mathcal{H}:\mathbb{M}\rightarrow\left[0,\infty\right)
\]
with $\min_{\nu\in\mathbb{M}}\mathcal{H}\left(\nu\right)=0$. The residual uncertainty after $n$ observations, for a
Gaussian random element $\xi$ in $\mathbb{S}$ and a sequential design
$\left(X_{n}\right)_{n\geq1}$, is defined as the $\mathcal{F}_{n}$-measurable
random variable 
\[
H_{n}:=\mathcal{H}\left(P_{n}^{\xi}\right)
\]
for $n\geq0$.
\end{defn}
\begin{defn}
Let $\mathcal{H}$ be an uncertainty functional on $\mathbb{M}$.
\begin{enumerate}
\item $\mathcal{H}$ has the supermartingale property if for any Gaussian
random element $\xi$ in $\mathbb{S}$, defined on any probability
space $\left(\Omega,\mathcal{F},P\right)$, and any sequential design
$\left(X_{n}\right)_{n\geq1}$ the sequence $\left(H_{n}\right)_{n\geq0}$
defined by 
\[
H_{n}:=\mathcal{H}\left(P_{n}^{\xi}\right)
\]
is a $\left(\mathcal{F}_{n}\right)_{n\geq0}$-supermartingale.
\item $\mathcal{H}$ is $\mathfrak{P}$-uniformly integrable if for
any Gaussian random element $\xi$ in $\mathbb{S}$, defined on any
probability space, the family $\left(\mathcal{H}\left(\bm{\nu}\right)\right)_{\bm{\nu}\in\mathfrak{P}\left(\xi\right)}$
is uniformly integrable. 
\item $\mathcal{H}$ is $\mathfrak{P}$-continuous if for any Gaussian random
element $\xi$ in $\mathbb{S}$, defined on any probability space,
and any sequence of random measures $\left(\bm{\nu}_{n}\right)_{n\geq1}\subset\mathfrak{P}\left(\xi\right)$
such that $\bm{\nu}_{n}\xrightarrow[n\rightarrow\infty]{a.s.}\bm{\nu}_{\infty}\in\mathfrak{P}\left(\xi\right)$
it holds 
\[
\mathcal{H}\left(\bm{\nu}_{n}\right)\xrightarrow[n\rightarrow\infty]{a.s.}\mathcal{H}\left(\bm{\nu}_{\infty}\right).
\]
\end{enumerate}
\end{defn}
For the definition of stepwise uncertainty reduction (SUR) sequential
design strategies we need to define some important functionals on
$\mathbb{M}$. For any $x\in\mathbb{X}$ observe the mapping $\mathcal{J}_{x}:\mathbb{M}\rightarrow\left[0,\infty\right]$
defined by
\begin{align*}
\mathcal{J}_{x}\left(\nu\right) & =\int_{\mathbb{R}^{d}}\int_{\mathbb{S}}\mathcal{H}\left(\text{Cond}{}_{x,f\left(x\right)+\tau\left(x\right)u}\left(\nu\right)\right)\nu\left(df\right)\phi_{d}\left(u\right)du\\
 & =\int_{\mathbb{R}^{d}}\mathcal{H}\left(\text{Cond}{}_{x,m_{\nu}\left(x\right)+\Sigma_{\nu}\left(x\right)^{\frac{1}{2}}u}\left(\nu\right)\right)\phi_{d}\left(u\right)du,
\end{align*}
where $\Sigma_{\nu}\left(x\right)^{\frac{1}{2}}\in\mathbb{R}^{d\times d}$
is the unique symmetric and positive semi-definite square root matrix
of $\Sigma_{\nu}\left(x\right)=k_{\nu}\left(x,x\right)+\mathcal{T}\left(x\right)$,
$\phi_{d}$ is the probability density function of the $\mathcal{N}_{d}\left(0,I_{d}\right)$-distribution
and $\nu=\c{GP}_{d}\left(m_{\nu},k_{\nu}\right)$. 
\begin{prop}
\label{prop:6}The mapping 
\[
\mathcal{J}:\mathbb{X}\times\mathbb{M}\rightarrow\left[0,\infty\right],\,\left(x,\nu\right)\mapsto\mathcal{J}_{x}\left(\nu\right)
\]
is $\mathcal{B}\left(\mathbb{X}\right)\otimes\mathcal{M}$-measurable.
\end{prop}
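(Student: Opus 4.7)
The plan is to rewrite $\mathcal{J}_{x}(\nu)$ using the second of the two given expressions,
\[
\mathcal{J}_{x}(\nu) = \int_{\mathbb{R}^{d}} \mathcal{H}\!\left(\text{Cond}_{x,\, m_{\nu}(x) + \Sigma_{\nu}(x)^{1/2} u}(\nu)\right) \phi_{d}(u)\, du,
\]
and to establish that the integrand is jointly $\mathcal{B}(\mathbb{X}) \otimes \mathcal{M} \otimes \mathcal{B}(\mathbb{R}^{d})$-measurable. Since it is nonnegative, Tonelli's theorem will then deliver $\mathcal{B}(\mathbb{X}) \otimes \mathcal{M}$-measurability of $\mathcal{J}$. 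My strategy is to view the integrand as a composition $\mathcal{H} \circ \text{Cond} \circ \Phi$, where $\Phi(x,\nu,u) := \bigl(x,\; m_{\nu}(x) + \Sigma_{\nu}(x)^{1/2} u,\; \nu\bigr)$, and verify measurability of each ingredient.

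The key preparatory step is showing joint measurability of $(x,\nu) \mapsto m_{\nu}(x)$ and $(x,\nu) \mapsto \Sigma_{\nu}(x)$. For fixed $x$, $\nu \mapsto m_{\nu}(x) = \int_{\mathbb{S}} f(x)\, \nu(df)$ is $\mathcal{M}$-measurable: on indicators $1_{A}$ ($A \in \mathcal{S}$) this is precisely the generator $\pi_{A}$ of $\mathcal{M}$; a standard simple-function and monotone-class extension upgrades this to all bounded $\mathcal{S}$-measurable integrands; and truncating $f(x)$ at level $N$, combined with Fernique's theorem to dominate the truncated sequence, yields it in the unbounded case. By Lemma \ref{lem:2}, $x \mapsto m_{\nu}(x)$ is continuous for every $\nu$, so the map is Carathéodory, hence jointly measurable because $\mathbb{X}$ is separable metric. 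The same argument applied to the quadratic integrand $\bigl(f(x)-m_{\nu}(x)\bigr)\bigl(f(x)-m_{\nu}(x)\bigr)^{\top}$ gives joint measurability of $(x,\nu) \mapsto k_{\nu}(x,x)$, and continuity of $\mathcal{T}$ passes this to $\Sigma_{\nu}$. Finally, the symmetric PSD square root $S \mapsto S^{1/2}$ is continuous on its domain (by spectral calculus), so $(x,\nu) \mapsto \Sigma_{\nu}(x)^{1/2}$ is jointly measurable.

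Combining these facts yields joint measurability of $\Phi$. By Proposition \ref{prop:3} applied with $n=1$, the mapping $\text{Cond}_{x,z}(\nu)$ is $\mathcal{B}(\mathbb{X}) \otimes \mathcal{B}(\mathbb{R}^{d}) \otimes \mathcal{M}/\mathcal{M}$-measurable, and $\mathcal{H}$ is measurable by the definition of an uncertainty functional; composing gives the sought joint measurability of the integrand, and Tonelli concludes. The main obstacle I anticipate is the measurability of $\nu \mapsto m_{\nu}(x)$ and $\nu \mapsto k_{\nu}(x,x)$: because $\mathcal{M}$ is defined only through evaluations on Borel sets of $\mathbb{S}$, reaching integrals of the unbounded functional $\delta_{x}$ requires the careful truncation argument above, leaning on Fernique integrability. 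Everything else amounts to routine composition, a Carathéodory-measurability appeal, and Tonelli.
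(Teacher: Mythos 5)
Your proof is correct and follows the same skeleton as the paper's: rewrite $\mathcal{J}_{x}(\nu)$ via the second expression, show the integrand $(x,u,\nu)\mapsto\mathcal{H}\bigl(\mathrm{Cond}_{x,\,m_{\nu}(x)+\Sigma_{\nu}(x)^{1/2}u}(\nu)\bigr)$ is jointly measurable using the measurability of the conditioning map from Proposition \ref{prop:3} and of $\mathcal{H}$, and conclude by Tonelli/Fubini. The only real divergence is in how you obtain joint measurability of $(x,\nu)\mapsto m_{\nu}(x)$ and $(x,\nu)\mapsto\Sigma_{\nu}(x)^{1/2}$: the paper simply reuses its appendix machinery (the lemma showing $\nu\mapsto m_{\nu}$ and $\nu\mapsto k_{\nu}$ are measurable as maps into $\mathcal{C}(\mathbb{X};\mathbb{R}^{d})$ and $\mathcal{C}(\mathbb{X}\times\mathbb{X};\mathbb{R}^{d\times d})$, proved via the dual space and the generator $\pi_{A}$, together with Lemma \ref{lem:12}), composed with the jointly continuous evaluation $(m,x)\mapsto m(x)$; you instead argue pointwise in $x$ (indicators, simple functions, truncation with Fernique integrability) and then invoke the Carath\'eodory criterion (continuity in $x$ by Lemma \ref{lem:2}, measurability in $\nu$, $\mathbb{X}$ separable metric). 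Both routes are sound: the paper's yields a three-line proof by leaning on lemmas already needed for Proposition \ref{prop:3}, whereas yours is self-contained but re-proves, at the pointwise level, what the function-space-valued measurability lemma already provides; note only that for $k_{\nu}(x,x)$ it is cleanest to write it as $\int f(x)f(x)^{\top}\nu(df)-m_{\nu}(x)m_{\nu}(x)^{\top}$ so the integrand does not itself depend on $\nu$, and that continuity of the PSD square root (which you use, and the paper leaves implicit) is indeed a standard fact.
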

\begin{defn}
Let $\xi$ be a Gaussian random element in $\mathbb{S}$, $\left(X_{n}\right)_{n\geq1}$
be a sequential design and $\mathcal{F}_{n}$ the $\sigma$-algebra
generated by $X_{1},Z_{1},...,X_{n},Z_{n}$. The SUR sampling criterion
$J_{n}$ associated to an uncertainty functional $\mathcal{H}$ on
$\mathbb{M}$ is defined as the function $J_{n}:\b X\rightarrow\left[0,\infty\right]$,
where 
\[
J_{n}\left(x\right):=\mathcal{J}_{x}\left(P_{n}^{\xi}\right)=\mathbb{E}_{n}\left[\mathcal{H}\left(\text{Cond}{}_{x,Z(x)}\left(P_{n}^{\xi}\right)\right)\right]:=\mathbb{E}\left[\mathcal{H}\left(\text{Cond}{}_{x,Z(x)}\left(P_{n}^{\xi}\right)\right)|\mathcal{F}_{n}\right]
\]
with $Z(x)=\xi\left(x\right)+\tau\left(x\right)U$ and $U\sim\c N_{d}\left(0,I_{d}\right)$
independent of $\xi,U_{1},...,U_{n}$ as defined in the introduction
of Section \ref{sec:3}.
\begin{enumerate}
\item $\left(X_{n}\right)_{n\geq1}$ is called a SUR sequential design associated
to the uncertainty functional $\mathcal{H}$, if 
\[
X_{n+1}\in\underset{x\in\b X}{\text{argmin }}J_{n}\left(x\right)
\]
for all $n\geq n_{0}$ with $n_{0}\in\mathbb{N}$. 
\item Let $\left(\varepsilon_{n}\right)_{n\in\mathbb{N}}$ be a sequence
of non-negative real numbers with $\epsilon_{n}\rightarrow0$ as $n\rightarrow\infty$.
$\left(X_{n}\right)_{n\geq1}$ is called an $\varepsilon$-quasi SUR
sequential design associated to the uncertainty functional $\mathcal{H}$,
if it holds
\[
J_{n}\left(X_{n+1}\right)\leq\inf_{x\in\mathbb{X}}J_{n}\left(x\right)+\varepsilon_{n}
\]
for all $n\geq n_{0}$ with $n_{0}\in\mathbb{N}$. 
\end{enumerate}
\end{defn}
\begin{lem}
\label{lem:3}Let $\mathcal{H}$ be a measurable uncertainty functional
on $\mathbb{M}$ that is $\mathfrak{P}$-continuous, $\mathfrak{P}$-uniformly
integrable and has the supermartingale property.
\begin{enumerate}
\item For any sequential design $\left(X_{n}\right)_{n\geq1}$ the sample
paths of 
\[
J_{n}:\mathbb{X}\rightarrow\left[0,\infty\right),\:J_{n}\left(x\right)=\mathbb{E}_{n}\left[\mathcal{H}\left(\text{Cond}{}_{x,Z(x)}\left(P_{n}^{\xi}\right)\right)\right]
\]
 are continuous on the random sets
\[
C_{n,k}:=\left\{ x\in\mathbb{X}:\text{rank}\left(\Sigma_{n}\left(x\right)\right)=k\right\} \subseteq\b X
\]
 for $n\in\mathbb{N}$ and $k=0,...,d$.
\item Assume that the covariance function $k$ of the underlying Gaussian
process $\xi$ is positive definite and $\c T\left(x\right)=\tau\left(x\right)\tau\left(x\right)^{\top}$
is positive definite for all $x\in\b X$. Then there exists a SUR
sequential design $\left(X_{n}\right)_{n\geq1}$ associated with $\c H$. 
\item There exists an $\epsilon$-quasi SUR sequential design $\left(X_{n}\right)_{n\geq1}$
associated with $\c H$. 
\end{enumerate}
\end{lem}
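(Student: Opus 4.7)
I would prove the three items in order, with item (1) doing the analytical heavy lifting via Proposition \ref{prop:5}, while items (2) and (3) follow by measurable-selection arguments once continuity (respectively measurability) of $J_n$ is in hand.

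For item (1), fix $\omega$, $n$, $k$ and a sequence $x_i \in C_{n,k}(\omega)$ with $x_i \to x \in C_{n,k}(\omega)$. Lemma \ref{lem:2} and continuity of $\mathcal{T}$ give continuity of the conditional mean $m_n$ and of $\Sigma_n = k_n(\cdot,\cdot) + \mathcal{T}$; combined with continuity of the principal-square-root map on the psd cone, this yields
\[
m_n(x_i) + \Sigma_n(x_i)^{1/2} u \;\longrightarrow\; m_n(x) + \Sigma_n(x)^{1/2} u
\]
for every $u \in \mathbb{R}^d$. Since by assumption $\text{rank}(\Sigma_n(x_i)) = \text{rank}(\Sigma_n(x)) = k$, Proposition \ref{prop:5} applied to $\nu = P_n^\xi(\omega)$ delivers
\[
\text{Cond}_{x_i,\, m_n(x_i)+\Sigma_n(x_i)^{1/2}u}\bigl(P_n^\xi\bigr) \;\longrightarrow\; \text{Cond}_{x,\, m_n(x)+\Sigma_n(x)^{1/2}u}\bigl(P_n^\xi\bigr)
\]
in $\mathbb{M}$ for each $u$. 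Applying $\mathfrak{P}$-continuity of $\mathcal{H}$ to this (essentially deterministic) sequence, whose entries all lie in $\mathfrak{P}(\xi)$, gives pointwise-in-$u$ convergence of the integrand defining $J_n$, and combining Fubini with $\mathfrak{P}$-uniform integrability lets me pass the limit through the Gaussian integral to conclude $J_n(x_i) \to J_n(x)$.

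For item (2), positive-definiteness of both $k$ and $\mathcal{T}$ forces $\Sigma_n(x) = k_n(x,x) + \mathcal{T}(x) \succeq \mathcal{T}(x) \succ 0$ at every $x$, so $C_{n,d} = \mathbb{X}$ for every realization and item (1) gives continuity of $J_n$ on the compact space $\mathbb{X}$. The argmin is thus a non-empty compact random set; joint $\mathcal{F}_n \otimes \mathcal{B}(\mathbb{X})$-measurability of $J_n$ from Proposition \ref{prop:6}, together with the Kuratowski--Ryll-Nardzewski selection theorem, produces an $\mathcal{F}_n$-measurable minimizer $X_{n+1}$ and the design is built inductively. For item (3), no positivity is available, and Proposition \ref{prop:6} only guarantees joint measurability of $J_n$. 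The random infimum $\underline{J}_n(\omega) := \inf_{x \in \mathbb{X}} J_n(\omega, x)$ is $\mathcal{F}_n$-measurable by separability of $\mathbb{X}$, and the non-empty random set $\{x \in \mathbb{X} : J_n(\omega, x) \leq \underline{J}_n(\omega) + \varepsilon_n\}$ has $\mathcal{F}_n \otimes \mathcal{B}(\mathbb{X})$-measurable graph; a measurable-projection / Jankov--von Neumann-type selection then yields the required $\mathcal{F}_n$-measurable $\varepsilon$-quasi minimizer.

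The main obstacle is item (1). Pointwise-in-$u$ convergence is easy once Proposition \ref{prop:5} and $\mathfrak{P}$-continuity are granted, but transferring this to convergence of the integrals truly relies on $\mathfrak{P}$-uniform integrability: there is no obvious dominating function, and one must carefully verify that every member of the family $\{\text{Cond}_{x_i,\, m_n(x_i)+\Sigma_n(x_i)^{1/2}u}(P_n^\xi)\}_{i,u}$ genuinely belongs to $\mathfrak{P}(\xi)$ so the hypothesis on $\mathcal{H}$ applies, and then combine this with a Fubini exchange to reduce to uniform integrability on $\Omega$. The rank-constancy constraint is precisely what keeps us in the regime where Proposition \ref{prop:5} is available, which is also the reason that global continuity of $J_n$ in item (2) requires positive definiteness rather than merely continuity of $(m,k)$ and $\mathcal{T}$.
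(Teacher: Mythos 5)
The core analytic step in your item (1) has a genuine gap, precisely at the point you flag but do not verify: for a \emph{fixed deterministic} $u$, the measure $\text{Cond}_{x,\,m_n(x)+\Sigma_n(x)^{1/2}u}\left(P_n^{\xi}\right)$ is in general \emph{not} an element of $\mathfrak{P}\left(\xi\right)$. Membership in $\mathfrak{P}\left(\xi\right)$ forces $\mathbb{E}\left[\bm{\nu}\left(A\right)\right]=P\left(\xi\in A\right)$ for all $A\in\mathcal{S}$, and already for $n=0$, $d=1$, $\tau\equiv0$, $u=0$ the measure $\text{Cond}_{x,m(x)}\left(P^{\xi}\right)$ is a fixed Gaussian measure with zero variance at $x$, so its second moment at $x$ is $m(x)^{2}$ rather than $m(x)^{2}+k(x,x)$; it cannot be a conditional distribution of $\xi$ given any sub-$\sigma$-algebra. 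Hence neither $\mathfrak{P}$-continuity nor $\mathfrak{P}$-uniform integrability applies to your family indexed by $(i,u)$, and the Fubini/uniform-integrability exchange has no hypothesis to rest on. The paper's proof avoids this by never freezing $u$: it writes $J_{n}\left(x\right)=\mathbb{E}_{n}\left[\mathcal{H}\left(\text{Cond}_{x,Z(x)}\left(P_{n}^{\xi}\right)\right)\right]$ with $Z(x)=\xi\left(x\right)+\tau\left(x\right)U$ random, so that $\bm{\nu}_{i}:=\text{Cond}_{x_{i},Z(x_{i})}\left(P_{n}^{\xi}\right)$ genuinely is a conditional distribution of $\xi$ (given $\mathcal{F}_{n}\vee\sigma\left(Z(x_{i})\right)$) and thus lies in $\mathfrak{P}\left(\xi\right)$; sample-path continuity of $\xi$, continuity of $\tau$, and Proposition \ref{prop:5} (using rank constancy on $C_{n,k}$) give $\bm{\nu}_{i}\rightarrow\bm{\nu}_{\infty}$ almost surely, and then $\mathfrak{P}$-continuity plus $\mathfrak{P}$-uniform integrability of $\left(\mathcal{H}\left(\bm{\nu}_{i}\right)\right)_{i}$ yield convergence of the expectations, i.e. $J_{n}\left(x_{i}\right)\rightarrow J_{n}\left(x\right)$ (the paper reduces to $n=0$, i.e. to continuity of $x\mapsto\mathcal{J}_{x}\left(\nu\right)$ for every $\nu\in\mathbb{M}$, which is the same reduction you make implicitly).

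There is a secondary gap in item (3): without global continuity of $J_{n}$, the infimum $\inf_{x\in\mathbb{X}}J_{n}\left(x\right)$ is not $\mathcal{F}_{n}$-measurable ``by separability'', and a Jankov--von Neumann or measurable-projection argument only produces selections measurable with respect to the (universal) completion of $\mathcal{F}_{n}$, whereas the definition of a sequential design requires $\mathcal{F}_{n}$-measurability proper. The paper instead exploits the piecewise continuity from item (1): it intersects the $\epsilon_{n}$-near-argmin set $M_{n}$ with the sets $C_{n,k}^{\leq}$, takes the smallest $k_{*}$ for which the intersection is non-empty, shows that $C_{n,k_{*}}^{\leq}\cap M_{n}\subset C_{n,k_{*}}$ is closed (hence a random compact set, since $J_{n}$ is continuous on $C_{n,k_{*}}$), and applies the selection theorem for random closed sets; you would need an argument of this type to obtain an honest $\mathcal{F}_{n}$-measurable $\varepsilon$-quasi minimizer. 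Your item (2), by contrast, is correct and even a bit more economical than the paper's: you get $\Sigma_{n}\left(x\right)\succeq\mathcal{T}\left(x\right)\succ0$ directly from positive semi-definiteness of $k_{n}\left(x,x\right)$, while the paper runs a Schur-complement argument (using positive definiteness of $k$) to show $k_{n}\left(x,x\right)$ itself is positive definite; both give $C_{n,d}=\mathbb{X}$ and then a measurable selection from the compact argmin set.
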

\begin{defn}
Let $\mathcal{H}$ be an uncertainty functional on $\mathbb{M}$ that
has the supermartingale property.
\begin{enumerate}
\item The expected gain functional at $x\in\mathbb{X}$ is defined
by 
\[
\mathcal{G}_{x}:\mathbb{M}\rightarrow\left[0,\infty\right),\,\mathcal{G}_{x}\left(\nu\right):=\mathcal{H}\left(\nu\right)-\mathcal{J}_{x}\left(\nu\right).
\]
\item The maximal expected gain functional is defined by 
\[
\mathcal{G}:\mathbb{M}\rightarrow\left[0,\infty\right),\,\mathcal{G}\left(\nu\right):=\sup_{x\in\mathbb{X}}\mathcal{G}_{x}\left(\nu\right).
\]
\end{enumerate}
\end{defn}

\section{Consistency of multivariate excursion set estimation under SUR sequential
design\label{sec:5}}

In the previous section we have recalled some desirable properties
of a SUR sequential design strategy that guarantee existence and continuity
of the sample criterion on a partition of the domain $\mathbb{X}$.
The following Proposition is the key to proving consistency of SUR
sequential design in the case of multivariate excursion set estimation.
The Proposition (see proof in Appendix \ref{subsec:app:sec5}) follows
from two Theorems in \cite{key-1} that are also stated for completeness
in Appendix \ref{subsec:app:sec5}. The proofs of the Theorems have
been adjusted to the multivariate setting.
\begin{prop}
\label{prop:7}Let $\mathcal{H}$ be an uncertainty functional on
$\mathbb{M}$, $\left(X_{n}\right)_{n\geq1}$ be an $\varepsilon$-quasi
SUR sequential design for $\mathcal{H}$ and $\mathcal{G}$ the associated
maximal expected gain functional. Assume that
\begin{enumerate}
\item $\mathcal{H}$ is $\mathfrak{P}$-continuous, $\mathfrak{P}$-uniformly
integrable and has the supermartingale property,
\item $\left\{ \nu\in\mathbb{M}:\mathcal{H}\left(\nu\right)=0\right\}
= \left\{ \nu\in\mathbb{M}:\mathcal{G}\left(\nu\right)=0\right\}.$ 
\end{enumerate}
Then it holds 
\[
H_{n}:=\mathcal{H}\left(P_{n}^{\xi}\right)\xrightarrow[n\rightarrow\infty]{a.s.}0.
\]

\end{prop}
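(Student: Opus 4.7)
The plan is to adapt the two-theorem consistency argument of \cite{key-1} to the vector-valued setting, reducing the claim to a supermartingale convergence step combined with a Fatou-type inequality for $\mathcal{G}$ that lets the quasi-SUR optimality bite asymptotically.

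\textbf{Step 1: supermartingale convergence and identification of the limit.} By assumption~(1), $(H_n)_{n\geq 0}$ is a nonnegative $\mathfrak{P}$-uniformly integrable supermartingale. Doob's theorem, combined with $\mathfrak{P}$-uniform integrability, gives almost-sure and $L^1$ convergence $H_n \to H_\infty$ for some nonnegative $H_\infty \in L^1$. Proposition~\ref{prop:4} provides $P_n^\xi \to P_\infty^\xi$ almost surely in $\mathbb{M}$, and $\mathfrak{P}$-continuity of $\mathcal{H}$ then forces $H_n = \mathcal{H}(P_n^\xi) \to \mathcal{H}(P_\infty^\xi)$ almost surely, identifying $H_\infty = \mathcal{H}(P_\infty^\xi)$.

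\textbf{Step 2: extracting information from quasi-SUR optimality.} Since $X_{n+1}$ is $\mathcal{F}_n$-measurable and $\mathbb{E}[H_{n+1} \mid \mathcal{F}_n, X_{n+1}=x] = \mathcal{J}_x(P_n^\xi)$, the conditional increment satisfies $\mathbb{E}[H_n - H_{n+1} \mid \mathcal{F}_n] = \mathcal{G}_{X_{n+1}}(P_n^\xi)$. Telescoping in expectation yields $\sum_{n\geq 0} \mathbb{E}[\mathcal{G}_{X_{n+1}}(P_n^\xi)] = \mathbb{E}[H_0] - \lim_n \mathbb{E}[H_n] < \infty$, hence $\mathbb{E}[\mathcal{G}_{X_{n+1}}(P_n^\xi)] \to 0$. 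The $\varepsilon$-quasi SUR inequality $\mathcal{G}_{X_{n+1}}(P_n^\xi) \geq \mathcal{G}(P_n^\xi) - \varepsilon_n$, coupled with $\varepsilon_n \to 0$, upgrades this to $\mathbb{E}[\mathcal{G}(P_n^\xi)] \to 0$, so $\mathcal{G}(P_n^\xi) \to 0$ in probability and almost surely along a subsequence $(n_k)$.

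\textbf{Step 3: pinning down the limit at zero.} It remains to show $\mathcal{G}(P_\infty^\xi) = 0$ almost surely; by assumption~(2) this is equivalent to $\mathcal{H}(P_\infty^\xi) = 0$, i.e.\ $H_\infty = 0$, which with Step~1 gives $H_n \to 0$ a.s. The natural way to pass the $L^1$ convergence of Step~2 to an a.s.\ statement about $\mathcal{G}(P_\infty^\xi)$ is a lower semi-continuity estimate $\mathcal{G}(P_\infty^\xi) \leq \liminf_k \mathcal{G}(P_{n_k}^\xi)$ along $P_{n_k}^\xi \to P_\infty^\xi$, coming from pointwise continuity of $\mathcal{G}_x$ in $\nu$ (via $\mathfrak{P}$-continuity of $\mathcal{H}$ and dominated convergence in the integral defining $\mathcal{J}_x$) followed by taking the supremum over $x \in \mathbb{X}$.

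\textbf{Main obstacle.} The delicate point is exactly this lower semi-continuity of $\mathcal{G}$ in the vector-valued case. In the scalar case of \cite{key-1} the dominated convergence step is routine, but here Proposition~\ref{prop:5} only guarantees continuity of $\text{Cond}_{x,z}$ within rank-constant strata of $\Sigma_\nu$, so the integrand $\mathcal{H}(\text{Cond}_{x, m_{\nu_n}(x) + \Sigma_{\nu_n}(x)^{1/2} u}(\nu_n))$ need not be continuous in $n$ at points where $\text{rank}(\Sigma_{\nu_n}(x))$ jumps. One must therefore produce an integrable envelope from $\mathfrak{P}$-uniform integrability and argue that such rank jumps happen on a $\phi_d$-negligible set of $u \in \mathbb{R}^d$ (or eventually not at all) to apply dominated convergence. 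This is the content of the two theorems from \cite{key-1} reworked in Appendix~\ref{subsec:app:sec5}, which together with Steps 1--3 close the proof.
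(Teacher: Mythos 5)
Your Steps 1 and 2 follow the same supermartingale skeleton as the paper (Theorem \ref{thm:7}, inherited from \cite{key-1}) and are essentially sound; note only that the conditional increments $H_n-\mathbb{E}\left[H_{n+1}\mid\mathcal{F}_n\right]=\mathcal{G}_{X_{n+1}}\left(P_n^{\xi}\right)$ are nonnegative with summable expectations, so the series converges almost surely and one gets $\mathcal{G}\left(P_n^{\xi}\right)\rightarrow 0$ a.s.\ outright, no subsequence needed. The genuine gap is Step 3, which you acknowledge but leave unproved, and the mechanism you propose for it would not work. The integrand in $\mathcal{J}_x\left(\nu\right)$ involves $\Sigma_{\nu}\left(x\right)^{\dagger}$, whose rank does not depend on the noise variable $u$ at all; the obstruction to $\mathcal{J}_x\left(P_{n}^{\xi}\right)\rightarrow\mathcal{J}_x\left(P_{\infty}^{\xi}\right)$ is that $\text{rank}\left(\Sigma_{n}\left(x\right)\right)$ can stay strictly larger than $\text{rank}\left(\Sigma_{\infty}\left(x\right)\right)$ for all $n$ (the posterior covariance may degenerate only in the limit), in which case the pseudo-inverses fail to converge for \emph{every} $u$, so no ``$\phi_d$-negligible set of $u$'' argument can rescue dominated convergence. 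Moreover Proposition \ref{prop:5} gives continuity of $\text{Cond}_{x,z}\left(\nu\right)$ in $\left(x,z\right)$ for a \emph{fixed} $\nu$, not continuity in $\nu$, so it does not support the lower semi-continuity of $\mathcal{G}$ in $\nu$ that your Step 3 needs. Finally, even granting the fixed-$x$ convergence, your ``take the supremum over $x$'' step hides a second difficulty: the convergence holds on a null set depending on $x$, and $\mathbb{X}$ is uncountable, so one needs a separability plus continuity-in-$x$ argument to pass to the supremum.

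The paper resolves both points differently. The fixed-$x$ statement $\mathcal{G}_x\left(P_{\infty}^{\xi}\right)\overset{a.s.}{=}0$ is obtained, as in \cite{key-1}, without any continuity of $\mathcal{J}_x$ in $\nu$: $\text{Cond}_{x,Z\left(x\right)}\left(P_n^{\xi}\right)$ is itself a conditional distribution of $\xi$ given an enlarged $\sigma$-algebra, hence an element of $\mathfrak{P}\left(\xi\right)$, and it converges almost surely to $\text{Cond}_{x,Z\left(x\right)}\left(P_{\infty}^{\xi}\right)$ by the abstract martingale-convergence argument of Proposition \ref{prop:4}; then $\mathfrak{P}$-continuity and $\mathfrak{P}$-uniform integrability of $\mathcal{H}$ apply. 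This deliberately avoids the explicit pseudo-inverse formulas (this is exactly the point of the remark after Proposition \ref{prop:5}). The supremum over $x$ is then handled in Theorem \ref{thm:8} by the rank stratification of the \emph{index set}: the nested closed sets $C_{k,\infty}^{\leq}=\left\{x\in\mathbb{X}:\text{rank}\left(\Sigma_{\infty}\left(x\right)\right)\leq k\right\}$ are compact and separable, $x\mapsto\mathcal{G}_x\left(P_{\infty}^{\xi}\right)$ is continuous on each stratum $C_{k,\infty}$ (Lemma \ref{lem:3}), and an induction over $k=0,\dots,d$ with countable dense subsets yields $\sup_{x\in\mathbb{X}}\mathcal{G}_x\left(P_{\infty}^{\xi}\right)=0$ almost surely. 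Once $\mathcal{G}\left(P_{\infty}^{\xi}\right)\overset{a.s.}{=}0$ is available, your concluding use of assumption (2) together with Step 1 is the same as the paper's; but as written, your proposal is missing the key ideas that make Step 3 go through in the vector-valued setting.
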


\subsection{Integrated Bernoulli Variance (IBV)}
\label{subsec:IBV}

In this subsection we are turning to the integrated Bernoulli variance,
that is for example used for uncertainty reduction in \cite{key-3}
in the case of river plume mapping as already mentioned in the Introduction.
The proofs are inspired by \cite{key-1} and can be found in Appendix
\ref{subsec:app:sec5}.

Let $\xi$ be a Gaussian random element in $\b S$. The residual uncertainty
of the integrated Bernoulli variance (IBV) is defined as the random
variable
\[
H_{n}^{IBV}:=\int_{\mathbb{X}}p_{n}\left(u\right)\left(1-p_{n}\left(u\right)\right)\mu\left(du\right)=\int_{\mathbb{X}}\text{Var}\left({\bf 1}_{\Gamma\left(\xi\right)}\left(u\right)|\c F_{n}\right)\mu\left(du\right),
\]
where $\mathcal{F}_{n}$ is the $\sigma$-algebra generated by $n\in\b N$
observations and
\[
p_{n}\left(u\right):=\mathbb{E}\left[{\bf 1}_{\Gamma\left(\xi\right)}\left(u\right)|\mathcal{F}_{n}\right]=P\left(\xi\left(u\right)\geq T|\mathcal{F}_{n}\right).
\]
More generally, we can define the corresponding uncertainty functional
$\mathcal{H}^{IBV}$ by the mapping 
\begin{align*}
\mathcal{H}^{IBV}:\mathbb{M} & \rightarrow\left[0,\infty\right)\\
\nu & \mapsto\int_{\mathbb{X}}p_{\nu}\left(u\right)\left(1-p_{\nu}\left(u\right)\right)\mu\left(du\right),
\end{align*}
where $p_{\nu}\left(u\right):=\int_{\mathbb{S}}{\bf 1}_{\Gamma\left(f\right)}\left(u\right)\nu\left(df\right)$.
Note that $\mathcal{H}^{IBV}$ is clearly an uncertainty functional
on $\mathbb{M}$. Furthermore, let $\mathcal{G}^{IBV}$ be the associated
maximal expected gain functional. 

We want to use Proposition \ref{prop:7} to show 
\[
H_{n}^{IBV}\xrightarrow[n\rightarrow\infty]{a.s.}0
\]
for any $\epsilon$-quasi SUR sequential design $\left(X_{n}\right)_{n\geq1}$
for $\mathcal{H}^{IBV}$. In the following we will check the assumptions
of the Proposition. 
\begin{lem}
\label{lem:4}$\mathcal{H}^{IBV}$ is $\mathfrak{P}$-uniformly integrable
and has the supermartingale property.
\end{lem}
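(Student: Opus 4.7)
The plan is to exploit the boundedness of the Bernoulli variance and the tower property of conditional expectation.

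First, I would observe that for any $\nu \in \mathbb{M}$ and any $u \in \mathbb{X}$, the excursion probability $p_{\nu}(u) \in [0,1]$, so $p_{\nu}(u)(1 - p_{\nu}(u)) \in [0,1/4]$. Since $\mu$ is assumed to be a finite measure on $\mathbb{X}$, this yields the uniform bound
\[
0 \leq \mathcal{H}^{IBV}(\nu) \leq \tfrac{1}{4}\mu(\mathbb{X}) < \infty
\]
for all $\nu \in \mathbb{M}$. In particular, for any Gaussian random element $\xi$ in $\mathbb{S}$, the family $(\mathcal{H}^{IBV}(\boldsymbol{\nu}))_{\boldsymbol{\nu} \in \mathfrak{P}(\xi)}$ is uniformly bounded and therefore $\mathfrak{P}$-uniformly integrable.

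For the supermartingale property, I would first check $\mathcal{F}_n$-measurability and integrability of $H_n^{IBV}$: measurability follows from joint measurability of $(u,\omega) \mapsto p_n(u)(\omega)$ (a regular conditional probability version exists because $\mathbf{1}_{\Gamma(\xi)}(u)$ is bounded and $(u,f) \mapsto \mathbf{1}_{\Gamma(f)}(u)$ is jointly measurable by closedness of $\mathbf{T}$) combined with Fubini, while integrability is immediate from the bound above. The key step is a pointwise application of the law of total variance: for each fixed $u \in \mathbb{X}$, the random variable $\mathbf{1}_{\Gamma(\xi)}(u)$ is square-integrable, and conditioning successively on $\mathcal{F}_n \subseteq \mathcal{F}_{n+1}$ gives
\[
\mathbb{E}\bigl[p_{n+1}(u)(1-p_{n+1}(u))\,\big|\,\mathcal{F}_n\bigr] + \mathrm{Var}\bigl(p_{n+1}(u)\,\big|\,\mathcal{F}_n\bigr) = p_n(u)(1-p_n(u)),
\]
so dropping the non-negative variance term yields the pointwise supermartingale inequality.

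Finally I would integrate this inequality with respect to $\mu$ and swap the order of integration using Tonelli (all quantities are non-negative) to conclude
\[
\mathbb{E}\bigl[H_{n+1}^{IBV}\,\big|\,\mathcal{F}_n\bigr] = \int_{\mathbb{X}} \mathbb{E}\bigl[p_{n+1}(u)(1-p_{n+1}(u))\,\big|\,\mathcal{F}_n\bigr]\,\mu(du) \leq \int_{\mathbb{X}} p_n(u)(1-p_n(u))\,\mu(du) = H_n^{IBV}.
\]
The main subtle point, more bookkeeping than obstacle, is justifying the interchange of conditional expectation and the Lebesgue integral over $\mathbb{X}$; this requires picking a jointly measurable version of $p_n$ so that Tonelli applies to the non-negative integrand $p_n(u)(1-p_n(u))$, which the $\mathbb{R}^d$-valued structure of $\xi$ does not affect since the entire argument is driven by the scalar indicator $\mathbf{1}_{\Gamma(\xi)}(u)$.
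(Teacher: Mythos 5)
Your proposal is correct and follows essentially the same route as the paper: the uniform bound $\mathcal{H}^{IBV}(\nu)\leq\frac{1}{4}\mu(\mathbb{X})$ for uniform integrability, and a pointwise supermartingale inequality for $p_n(u)(1-p_n(u))$ integrated over $\mathbb{X}$. Your law-of-total-variance step is just the packaged form of the paper's tower-property-plus-Jensen computation (the dropped conditional variance term is exactly the Jensen gap), and your explicit Tonelli justification of the interchange of conditional expectation and the $\mu$-integral is a welcome touch that the paper leaves implicit.
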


Recall that for every sequence
$\left(\bm{\nu}_{n}\right)_{n\geq1}\subset\mathfrak{P}\left(\xi\right)$
such that $\bm{\nu}_{n}\xrightarrow{}\bm{\nu}_{\infty}\in\mathfrak{P}\left(\xi\right)$ almost surely, 
it holds 
\[
\mathcal{H}^{IBV}\left(\bm{\nu}_{n}\right)\xrightarrow[n\rightarrow\infty]{a.s.}\mathcal{H}^{IBV}\left(\bm{\nu}_{\infty}\right)
\]
and that $\bm{\nu}_{n}\in\mathfrak{P}\left(\xi\right)$ with $\xi\sim\mathcal{GP}_{d}\left(m,k\right)$
if there exist $\sigma$-algebras $\mathcal{G}_{n}\subset\mathcal{F}$
such that $\bm{\nu}_{n}=P\left(\xi\in\cdot|\mathcal{G}_{n}\right)$
and $\bm{\nu}_{n}\left(\omega\right)$ is a Gaussian measure on $\left(\mathcal{C}\left(\mathbb{X},\mathbb{R}^{d}\right),\left\Vert \cdot\right\Vert _{\infty}\right)$.
In this case we can write $\bm{\nu}_{n}=\mathcal{GP}_{d}\left(m_{n},k_{n}\right)$
for some random mean function $m_{n}$ and random covariance function
$k_{n}$. We have 
\[
\bm{\nu}_{n}\xrightarrow[n\rightarrow\infty]{a.s.}\bm{\nu}_{\infty}\in\mathfrak{P}\left(\xi\right),
\]
if 
\[
m_{n}\xrightarrow[n\rightarrow\infty]{a.s.}m_{\infty}
\]
uniformly on $\X$ and 
\[
k_{n}\xrightarrow[n\rightarrow\infty]{a.s.}k_{\infty}
\]
uniformly on $\XX$, where 
\[
\bm{\nu}_{\infty}=\mathcal{GP}_{d}\left(m_{\infty},k_{\infty}\right)=P\left(\xi\in\cdot|\mathcal{G}_{\infty}\right)
\]
for some $\mathcal{G}_{\infty}\subset\mathcal{F}$. Furthermore, we
can write
\[
\mathcal{H}^{IBV}\left(\bm{\nu}_{n}\right)=\int_{\mathbb{X}}g\left(P\left(\xi\left(u\right)\geq T|\mathcal{G}_{n}\right)\right)\mu\left(du\right)
\]
for the bounded continuous function $g:\left[0,1\right]\rightarrow\left[0,\frac{1}{2}\right],\,x\mapsto x\left(1-x\right)$,
so the claim follows by the Dominated Convergence Theorem if we can
show 
\[
P\left(\xi\left(u\right)\geq T|\mathcal{G}_{n}\right)=P\left(\xi_{1}\left(u\right)\geq \t_{1},...,\xi_{d}\left(u\right)\geq \t_{d}|\mathcal{G}_{n}\right)\xrightarrow[n\rightarrow\infty]{a.s.}P\left(\xi\left(u\right)\geq T|\mathcal{G}_{\infty}\right).
\]

We have for almost all $\omega\in\Omega$ and all $u\in\b X$ by definition
of the multivariate Gaussian process that $\xi\left(u\right)\sim\mathcal{N}_{d}\left(m\left(u\right),k\left(u,u\right)\right)$
and 
\[
\nu_{n}\left(u,\omega\right):=\mathcal{L}\left(\left(\xi\left(u\right)|\mathcal{G}_{n}\right)\left(\omega\right)\right)=\mathcal{N}_{d}\left(m_{n}\left(u\right)\left(\omega\right),k_{n}\left(u,u\right)\left(\omega\right)\right).
\]
for all $n\in\b N\cup\left\{ \infty\right\} $ with
\[
m_{n}\left(u\right)\left(\omega\right)\xrightarrow[n\rightarrow\infty]{}m_{\infty}\left(u\right)\left(\omega\right),
\]
\[
k_{n}\left(u,u\right)\left(\omega\right)\xrightarrow[n\rightarrow\infty]{}k_{\infty}\left(u,u\right)\left(\omega\right)
\]
by the almost sure uniform convergence of $m_{n}$ and $k_{n}$. This
already implies $\nu_{n}\left(u,\omega\right)\xrightarrow[n\rightarrow\infty]{w}\nu_{\infty}\left(u,\omega\right)$
for almost all $\omega\in\Omega$ and all $u\in\X$, so by the Portmanteau
Theorem
\begin{align*}
\nu_{n}\left(u,\omega\right)\left({\bf T}\right)= & P\left(\xi_{1}\left(u\right)\geq \t_{1},...,\xi_{d}\left(u\right)\geq \t_{d}|\mathcal{G}_{n}\right)\left(\omega\right)\\
\xrightarrow[n\rightarrow\infty]{} & P\left(\xi_{1}\left(u\right)\geq \t_{1},...,\xi_{d}\left(u\right)\geq \t_{d}|\mathcal{G}_{\infty}\right)\left(\omega\right)\\
= & \nu_{\infty}\left(u,\omega\right)\left({\bf T}\right),
\end{align*}
if
\[
\nu_{\infty}\left(u,\omega\right)\left(\partial{\bf T}\right)=P\left(\exists i\in\left\{ 1,...,d\right\} :\xi_{i}\left(u\right)=\t_{i}|\mathcal{G}_{\infty}\right)\left(\omega\right)=0,
\]
which clearly holds if $\forall j\in\left\{ 1,...,d\right\} :\text{ }k_{\infty}\left(u,u\right)\left(\omega\right)_{jj}>0$
or $m_{\infty}\left(u\right)\left(\omega\right)_{j}\neq \t_{j}$, but
turns out to be more difficult in the other cases. For the prove that $\mathcal{H}^{IBV}$ is $\mathfrak{P}$-continuous, we need to construct a suitable finite decomposition of $\b X$ to check the convergence in each case. 
\begin{lem}
\label{lem:13}
Define the functions $F_1:\mathbb{X} \times \mathcal{P}(\{1,...,d\}) \rightarrow [0,\infty)$ and $F_2:\mathbb{X} \times \mathcal{P}(\{1,...,d\}) \times \Omega \rightarrow [0,\infty)$by
\[
F_1(u,J) = \sum_{j\in J}k\left(u,u\right)_{jj}^{2}
\]
and
\[
F_2(u,J,\omega) = \sum_{j\in J}\left(m_{\infty}\left(u\right)\left(\omega\right)_{j}-\t_{j}\right)^{2} + k_{\infty}\left(u,u\right)\left(\text{\ensuremath{\omega}}\right)_{jj}^{2}.
\]
For $J_1,J_2\subseteq\left\{ 1,...,d\right\}$ and $\omega \in \Omega$ fixed let $B_{J_1,J_2}(\omega) \subseteq \mathbb{X}$ be the set of all $u \in \mathbb{X}$ such that 
\begin{enumerate}
    \item $F_1(u,J_1)=0$
    \item $F_2(u,J_2, \omega)=0$
    \item For every $J'_1 \supset J_1$ and $J'_2 \supset J_2$ it holds $F_1(u,J'_1)>0$ and $F_2(u,J'_2,\omega)>0$.
\end{enumerate}
Then $\mathbb{X}$ can be written as the disjoint union 
\[
\bigcup_{J_1,J_2\subseteq\left\{ 1,...,d\right\} }B_{J_1,J_2}\left(\omega\right)
\]
and it holds:
\begin{enumerate}
\item If $J_2\not\subseteq J_1$, then $B_{J_1,J_2}$ is 
almost surely a $\mu$- null set.
\item If $J_2\subseteq J_1$, then
\[
P\left(\xi\left(u\right)\geq T|\mathcal{G}_{n}\right)\xrightarrow[n\rightarrow\infty]{a.s.}P\left(\xi\left(u\right)\geq T|\mathcal{G}_{\infty}\right)
\]
for all $u \in B_{J_1,J_2}$.
\end{enumerate}

\end{lem}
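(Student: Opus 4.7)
The plan is to first establish the partition structure and then peel off the two parts of the conclusion separately.

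For the disjoint union: since $F_1$ and $F_2$ are sums of non-negative terms, $F_1(u,J)=0$ is equivalent to $k(u,u)_{jj}=0$ for every $j\in J$, and $F_2(u,J,\omega)=0$ is equivalent to $m_\infty(u)(\omega)_j=\t_j$ and $k_\infty(u,u)(\omega)_{jj}=0$ for every $j\in J$. The maximality condition~(3) then forces $J_1$ to equal the full index set where $k(u,u)_{jj}=0$, and similarly for $J_2$ with respect to $F_2$; these maximal choices being unique given $(u,\omega)$, each $u\in\mathbb{X}$ lies in exactly one $B_{J_1,J_2}(\omega)$, so the disjoint union is immediate.

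For part~(1) ($J_2\not\subseteq J_1$), pick $j_0\in J_2\setminus J_1$; every $u\in B_{J_1,J_2}(\omega)$ then satisfies $k(u,u)_{j_0 j_0}>0$ while $k_\infty(u,u)(\omega)_{j_0 j_0}=0$ and $m_\infty(u)(\omega)_{j_0}=\t_{j_0}$. By Tonelli it suffices to show that for each such $u$,
\[
P\bigl(k_\infty(u,u)_{j_0 j_0}=0,\ m_\infty(u)_{j_0}=\t_{j_0}\bigr)=0.
\]
On the event $\{k_\infty(u,u)_{j_0 j_0}=0\}$ the conditional variance of $\xi_{j_0}(u)$ vanishes, so $\xi_{j_0}(u)=m_\infty(u)_{j_0}$ almost surely there; the event in question is therefore contained, up to a null set, in $\{\xi_{j_0}(u)=\t_{j_0}\}$, which has prior probability zero since $\xi_{j_0}(u)\sim\mathcal{N}(m(u)_{j_0},k(u,u)_{j_0 j_0})$ is non-degenerate. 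Joint measurability of $(u,\omega)\mapsto(m_\infty(u)(\omega),k_\infty(u,u)(\omega))$ (via Proposition~\ref{prop:4} and Lemma~\ref{lem:2}) legitimizes the Tonelli step, and this zero-variance/Tonelli argument is where the bulk of the care is needed.

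For part~(2) ($J_2\subseteq J_1$), the strategy is to reduce the problem to a lower-dimensional orthant whose limit Gaussian assigns zero mass to its boundary. For any $j\in J_1$, a Cauchy--Schwarz inequality applied to the matrix covariance yields $k(u,y)_{jl}^2\leq k(u,u)_{jj}\,k(y,y)_{ll}=0$ for all $y,l$, so the $j$-th row of $K(u,\mathbf{x}_n)$ vanishes and Theorem~\ref{thm:3} forces $m_n(u)_j=m(u)_j$ for every $n$; in particular $m_\infty(u)_j=m(u)_j$. For $j\in J_2\subseteq J_1$ this combined with the $J_2$-constraint $m_\infty(u)_j=\t_j$ gives $m(u)_j=\t_j$ and therefore $\xi_j(u)=\t_j$ a.s. Setting $I=\{1,\ldots,d\}\setminus J_2$, the events $\{\xi(u)\geq T\}$ and $\{\xi_I(u)\geq T_I\}$ coincide up to a null set, so the convergence reduces to $P(\xi_I(u)\geq T_I\mid\mathcal{G}_n)(\omega)\to P(\xi_I(u)\geq T_I\mid\mathcal{G}_\infty)(\omega)$. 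The conditional laws $\mathcal{N}_{|I|}(m_n(u)_I(\omega),k_n(u,u)_{II}(\omega))$ converge weakly to $\mathcal{N}_{|I|}(m_\infty(u)_I(\omega),k_\infty(u,u)_{II}(\omega))$ from convergence of parameters, and for each $j\in I$ one has $j\notin J_2$, so either $k_\infty(u,u)_{jj}(\omega)>0$ (continuous marginal) or $m_\infty(u)(\omega)_j\neq\t_j$ (point mass off $\t_j$); either way the limit marginal gives no mass to $\{v_j=\t_j\}$. A union bound then shows that the boundary of $\{v\in\mathbb{R}^{|I|}:v\geq T_I\}$ is null under the limit, and the Portmanteau Theorem delivers the claimed pointwise a.s.\ convergence.
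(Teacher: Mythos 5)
Your overall strategy coincides with the paper's: the same maximal-set decomposition of $\mathbb{X}$, the same zero-conditional-variance plus Tonelli argument for part (1), and the same reduction to a lower-dimensional orthant plus Portmanteau argument for part (2); the partition claim and part (1) are fine as written. The one step that does not survive in the generality the lemma requires is in part (2), where you derive $m_n(u)_j=m(u)_j$ for $j\in J_1$ from the vanishing $j$-th row of $K(u,\bm{x}_n)$ and the explicit formula of Theorem \ref{thm:3}. That formula describes conditioning on finitely many pointwise observations at a design $\bm{x}_n$, but in Lemma \ref{lem:13} the $\sigma$-algebras $\mathcal{G}_n$ are arbitrary: all that is given is that $P\left(\xi\in\cdot\,|\,\mathcal{G}_n\right)$ is Gaussian and converges almost surely, and this generality is exactly what the $\mathfrak{P}$-continuity of $\mathcal{H}^{IBV}$ (Lemma \ref{lem:5}) needs, since $\mathfrak{P}\left(\xi\right)$ contains conditional laws given any sub-$\sigma$-algebra of $\mathcal{F}$. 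For such $\mathcal{G}_n$ there is no design vector and no matrix $K(u,\bm{x}_n)$, so the step as written is inapplicable.

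The repair is a one-liner and is what the paper does: $k(u,u)_{jj}=0$ forces $\xi_j(u)$ to be almost surely equal to the constant $m(u)_j$, hence for \emph{any} sub-$\sigma$-algebra the conditional mean is $m(u)_j$ and the conditional variance is $0$ almost surely; combined with the $J_2$-constraint $m_\infty(u)_j=\t_j$ this yields $\xi_j(u)=\t_j$ a.s.\ for $j\in J_2$, and the rest of your argument goes through unchanged. A minor, harmless difference from the paper in the remaining step: you keep the degenerate coordinates in $J_1\setminus J_2$ inside the Gaussian vector and absorb them into the boundary-null/Portmanteau argument (their limit marginals are point masses off the threshold), whereas the paper factors them out as indicator terms $\mathbf{1}_{m(u)_j\geq\t_j}$ and applies Portmanteau only to the coordinates in $J_1^{c}$; both routes are valid.
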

\begin{lem}
\label{lem:5}$\mathcal{H}^{IBV}$ is $\mathfrak{P}$-continuous.
\end{lem}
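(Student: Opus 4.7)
The plan is to combine the reduction worked out just before Lemma~\ref{lem:13} (Portmanteau-style pointwise convergence of the excursion probabilities plus Dominated Convergence) with the partition of $\mathbb{X}$ supplied by Lemma~\ref{lem:13}. Let $(\bm{\nu}_n)_{n\geq1}\subset\mathfrak{P}(\xi)$ with $\bm{\nu}_n \xrightarrow[n\to\infty]{a.s.} \bm{\nu}_{\infty}\in\mathfrak{P}(\xi)$, write $\bm{\nu}_n=\mathcal{GP}_d(m_n,k_n)$, set $p_n(u,\omega):=P(\xi(u)\geq T\mid\mathcal{G}_n)(\omega)$, and recall from the discussion preceding Lemma~\ref{lem:13} that $\mathcal{H}^{IBV}(\bm{\nu}_n)(\omega)=\int_{\mathbb{X}} g(p_n(u,\omega))\,\mu(du)$ for the continuous function $g(x)=x(1-x)$ bounded by $1/4$.

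First I would fix $\omega$ in a single full-measure event $\Omega'$ on which (a) $m_n\to m_\infty$ uniformly on $\mathbb{X}$, (b) $k_n\to k_\infty$ uniformly on $\mathbb{X}\times\mathbb{X}$, (c) for every pair $(J_1,J_2)\in\mathcal{P}(\{1,\dots,d\})^2$ with $J_2\not\subseteq J_1$ the set $B_{J_1,J_2}(\omega)$ is $\mu$-null, and (d) for every pair with $J_2\subseteq J_1$ the pointwise convergence $p_n(u,\omega)\to p_\infty(u,\omega)$ holds for all $u\in B_{J_1,J_2}(\omega)$. Since $\mathcal{P}(\{1,\dots,d\})^2$ is finite, (c) and (d) require intersecting only finitely many almost-sure events from Lemma~\ref{lem:13}(1)--(2), so $\Omega'$ remains of full probability.

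For $\omega\in\Omega'$, the disjoint decomposition $\mathbb{X}=\bigsqcup_{J_1,J_2} B_{J_1,J_2}(\omega)$ then shows that the $\mu$-mass of $\mathbb{X}$ is carried by the union of those $B_{J_1,J_2}(\omega)$ with $J_2\subseteq J_1$, and on each of these $p_n(\cdot,\omega)\to p_\infty(\cdot,\omega)$ pointwise. Hence $p_n(\cdot,\omega)\to p_\infty(\cdot,\omega)$ holds $\mu$-almost everywhere on $\mathbb{X}$. Continuity of $g$ then yields $g(p_n(\cdot,\omega))\to g(p_\infty(\cdot,\omega))$ $\mu$-a.e., the sequence is uniformly bounded by $1/4$, and $\mu$ is finite, so the Dominated Convergence Theorem delivers
\[
\mathcal{H}^{IBV}(\bm{\nu}_n)(\omega)=\int_{\mathbb{X}} g(p_n(u,\omega))\,\mu(du)\xrightarrow[n\to\infty]{}\int_{\mathbb{X}} g(p_\infty(u,\omega))\,\mu(du)=\mathcal{H}^{IBV}(\bm{\nu}_\infty)(\omega)
\]
for every $\omega\in\Omega'$, which is the desired $\mathfrak{P}$-continuity.

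The genuinely delicate analytic work (controlling the Portmanteau boundary $\{\exists i:\xi_i(u)=\t_i\}$ on the problematic loci where some diagonal entries of $k_\infty(u,u)$ vanish while some components of $m_\infty(u)$ equal the corresponding thresholds) has already been isolated inside Lemma~\ref{lem:13}; once that is in hand, the only remaining point requiring care here is the measurability bookkeeping that lets us produce a \emph{single} full-measure event on which $\mu$-a.e. pointwise convergence of $p_n$ can be asserted, and this is handled by finiteness of the index set $\mathcal{P}(\{1,\dots,d\})^2$.
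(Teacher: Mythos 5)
Your proof is correct and follows essentially the same route as the paper: it invokes the decomposition of $\mathbb{X}$ from Lemma~\ref{lem:13}, discards the $\mu$-null pieces with $J_2\not\subseteq J_1$, uses the pointwise convergence of $P\left(\xi\left(u\right)\geq T|\mathcal{G}_{n}\right)$ on the remaining pieces, and concludes by dominated convergence with the bounded function $g\left(x\right)=x\left(1-x\right)$. The only difference is that you spell out explicitly the intersection of the finitely many almost-sure events into a single full-measure set, which the paper handles implicitly by noting that a finite union of null sets is null.
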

\begin{proof}
Using the decomposition for $\mathbb{X}$ from the above Lemma \ref{lem:13} and recalling that the finite union of $P$-null sets is again a $P$-null set,
we get by the first property in Lemma \ref{lem:13}
\[
\mu\left(\mathbb{X}\right)\overset{a.s.}{=}\mu\left(A\right),
\]
where the random subset $A$ is defined by 
\[
A\left(\omega\right):=\bigcup_{\underset{J_2\subseteq J_1}{J_1,J_2\subseteq\left\{ 1,...,d\right\} }}B_{J_1,J_2}\left(\omega\right).
\]
Hence we can conclude with $g:\left[0,1\right]\rightarrow\left[0,\frac{1}{2}\right],\,x\mapsto x\left(1-x\right)$ that
\begin{align*}
 & \int_{\mathbb{X}}g\left(P\left(\xi\left(u\right)\geq T|\mathcal{G}_{n}\right)\right)\mu\left(du\right)\\
\overset{a.s.}{=} & \int_{A}g\left(P\left(\xi\left(u\right)\geq T|\mathcal{G}_{n}\right)\right)\mu\left(du\right)\\
\xrightarrow[n\rightarrow\infty]{a.s.} & \int_{A}g\left(P\left(\xi\left(u\right)\geq T|\mathcal{G}_{\infty}\right)\right)\mu\left(du\right)\\
\overset{a.s.}{=} & \int_{\mathbb{X}}g\left(P\left(\xi\left(u\right)\geq T|\mathcal{G}_{\infty}\right)\right)\mu\left(du\right).
\end{align*}
by the second property in Lemma \ref{lem:13} and dominated convergence.
\end{proof}
\begin{lem}
\label{lem:6}
\begin{align*}
\left\{ \nu\in\mathbb{M}:\mathcal{H}^{IBV}\left(\nu\right)=0\right\}
= \left\{ \nu\in\mathbb{M}:\mathcal{G}^{IBV}\left(\nu\right)=0\right\}.
\end{align*}
\end{lem}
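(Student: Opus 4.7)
The plan is to establish the two inclusions separately. The forward one $\{\mathcal{H}^{IBV}=0\}\subseteq\{\mathcal{G}^{IBV}=0\}$ is immediate from Lemma \ref{lem:4}: since $\mathcal{H}^{IBV}$ has the supermartingale property, $0\le \mathcal{J}_x(\nu)\le \mathcal{H}^{IBV}(\nu)$, so $\mathcal{G}^{IBV}_x(\nu) \in [0,\mathcal{H}^{IBV}(\nu)]$ for every $x\in\mathbb{X}$, and $\mathcal{H}^{IBV}(\nu)=0$ forces $\mathcal{G}^{IBV}_x(\nu)=0$ for all $x$ and hence $\mathcal{G}^{IBV}(\nu)=0$. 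The substance of the lemma is the reverse inclusion.

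For the reverse inclusion I would argue by contrapositive. The law of total variance applied to ${\bf 1}_{\xi(u)\in{\bf T}}$ conditioned on $Z(x)$, together with Fubini, gives
\[
\mathcal{G}^{IBV}_x(\nu) \;=\; \int_{\mathbb{X}} \text{Var}_{Z(x)}\bigl(p_{\nu_{x,Z(x)}}(u)\bigr)\,\mu(du).
\]
Assume $\mathcal{H}^{IBV}(\nu)>0$: then $S:=\{u\in\mathbb{X}:p_\nu(u)\in(0,1)\}$ has $\mu(S)>0$, so $S\cap\mathrm{supp}(\mu)\ne\emptyset$ and one can pick $u_0$ in this intersection. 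On $S$ one must have $k_\nu(u_0,u_0)\ne 0$, since a zero covariance there would make $\xi(u_0)$ $\nu$-deterministic and force $p_\nu(u_0)\in\{0,1\}$. Taking $x=u_0$, the plan is to show that the integrand $u\mapsto \text{Var}_{Z(u_0)}(p_{\nu_{u_0,Z(u_0)}}(u))$ is strictly positive at $u=u_0$; then continuity of $m_\nu,k_\nu$ (Lemma \ref{lem:2}) transfers to continuity of this integrand at $u_0$, so positivity holds on an open neighborhood $V$ of $u_0$, and $u_0\in\mathrm{supp}(\mu)$ yields $\mu(V)>0$, giving $\mathcal{G}^{IBV}_{u_0}(\nu)>0$ and contradicting the assumption.

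The main obstacle is the pointwise strict positivity $\text{Var}_{Z(u_0)}(p_{\nu_{u_0,Z(u_0)}}(u_0))>0$. Writing $K=k_\nu(u_0,u_0)$ and $\Sigma=K+\mathcal{T}(u_0)$, the posterior mean $\mu_{u_0,Z(u_0)}=m_\nu(u_0)+K\Sigma^{\dagger}(Z(u_0)-m_\nu(u_0))$ has distribution $\mathcal{N}(m_\nu(u_0),\,K\Sigma^{\dagger}K)$. A p.s.d.\ argument shows that $K\Sigma^{\dagger}K\ne 0$: otherwise $\mathrm{image}(K)\subseteq\mathrm{null}(\Sigma)=\mathrm{null}(K)\cap\mathrm{null}(\mathcal{T}(u_0))\subseteq\mathrm{null}(K)$, forcing $K=0$ since $\mathrm{image}(K)\cap\mathrm{null}(K)=\{0\}$ for symmetric $K$, contradicting $u_0\in S$. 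The delicate step is then to argue that the Gaussian orthant probability $q(\mu)=P(\mathcal{N}(\mu,V_{u_0})\in{\bf T})$, with $V_{u_0}=K-K\Sigma^{\dagger}K$, cannot be $\mathrm{law}(\mu_{u_0,Z(u_0)})$-almost surely constant; this will require a case analysis on the rank of $V_{u_0}$, using real-analyticity of the Gaussian CDF in the full-rank case and a comparison of the affine supports of the conditional law of $\xi(u_0)$ with the orthant ${\bf T}$ in rank-deficient cases, combined with the consistency constraint $\mathbb{E}[q(\mu_{u_0,Z(u_0)})]=p_\nu(u_0)\in(0,1)$ to rule out the constant values $0$ and $1$.
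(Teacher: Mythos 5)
Your forward inclusion and the overall contrapositive strategy are sound, and your algebraic observation that $K\Sigma^{\dagger}K\neq 0$ whenever $K=k_{\nu}(u_0,u_0)\neq 0$ (via $\mathrm{null}(\Sigma)=\mathrm{null}(K)\cap\mathrm{null}(\mathcal{T}(u_0))$) is correct and fully argued. However, the proof is not complete: the step you yourself flag as delicate --- that the orthant probability $q(\mu)=\mathcal{N}_d(\mu,V_{u_0})({\bf T})$ cannot be almost surely constant under the law $\mathcal{N}_d(m_{\nu}(u_0),K\Sigma^{\dagger}K)$ --- is precisely the crux of the lemma, and you only announce a programme (rank case analysis, real-analyticity, affine supports) without carrying it out. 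This is the analogue of the point where the paper's proof passes from a.s.\ constancy of the conditional excursion probability to a.s.\ constancy of $m_1(u)$. The claim is in fact provable, and more simply than your sketch suggests: if $q$ were a.s.\ constant, the constant would equal $p_{\nu}(u_0)\in(0,1)$ by the tower property, while along any nonzero direction $v\in\mathrm{range}(K\Sigma^{\dagger}K)$ of the affine support one has, for a coordinate $i$ with $v_i\neq0$ and a suitable sign of $t$, the bound $q(m'+tv)\le P(Y_i\ge t_i-m'_i-tv_i)\rightarrow 0$ with $Y\sim\mathcal{N}_d(0,V_{u_0})$, contradicting a.e.\ equality to a positive constant. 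Until some such argument is written down, the reverse inclusion is not established.

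A second asserted-but-unproven step is the continuity at $u_0$ of the integrand $u\mapsto\mathrm{Var}\,(\mathcal{N}_d(m_1(u),k_1(u,u))({\bf T}))$. Continuity of $m_{\nu},k_{\nu}$ gives $m_1(u)\to m_1(u_0)$ a.s.\ and $k_1(u,u)\to k_1(u_0,u_0)$, but the map $(\mu,V)\mapsto\mathcal{N}_d(\mu,V)({\bf T})$ is not continuous at singular $V$ when the limit law charges $\partial{\bf T}$; this can occur at your $u_0$, e.g.\ when some coordinate $i$ satisfies $k_{\nu}(u_0,u_0)_{ii}=0$ and $m_{\nu}(u_0)_i=t_i$, a configuration compatible with $p_{\nu}(u_0)\in(0,1)$. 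This is exactly the degeneracy the paper has to confront in Proposition \ref{prop:5} and Lemma \ref{lem:13}, so it cannot be dismissed by Lemma \ref{lem:2} alone; you would need either a decomposition of the type used in Lemma \ref{lem:13} or a more careful choice of $u_0$. Note that the paper's own proof takes a different, global route that bypasses both difficulties you face: from $\mathcal{G}^{IBV}(\nu)=0$ and the law of total variance it obtains $\mathrm{Var}(m_1(u))=k(u,x)\Sigma(x)^{\dagger}k(u,x)^{\top}=0$ for every $x$ and $\mu$-a.e.\ $u$, then uses the pseudo-inverse identities and a trace-positivity argument to conclude $k(u,u)=0$ for $\mu$-a.e.\ $u$, whence $p_{\nu}\in\{0,1\}$ $\mu$-a.e.\ and $\mathcal{H}^{IBV}(\nu)=0$, with no continuity in $u$ and no fine analysis of orthant probabilities beyond the constancy step.
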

Note that 
$\left\{ \nu\in\mathbb{M}:\mathcal{H}\left(\nu\right)=0\right\}
\subseteq
\left\{ \nu\in\mathbb{M}:\mathcal{G}\left(\nu\right)=0\right\}$
always holds as shown in \cite{key-1}. In the proof of the above Lemma (see Appendix) we will only focus
on the reverse inclusion. 

\begin{thm}
\label{thm:9}
If $\left(X_{n}\right)_{n\geq1}$ is an $\varepsilon$-quasi SUR sequential
design for $\mathcal{H}^{IBV}$, then it holds 
\[
H_{n}^{IBV}:=\mathcal{H}^{IBV}\left(P_{n}^{\xi}\right)\xrightarrow[n\rightarrow\infty]{a.s.}0.
\] 
Furthermore, it holds almost surely and in
$L^{1}$ that,
\[
\int_{\mathbb{X}}\left({\bf 1}_{\xi\left(u\right)\geq T}-p_{n}\left(u\right)\right)^{2}\mu\left(du\right)\xrightarrow[n\rightarrow\infty]{}0.
\]
\end{thm}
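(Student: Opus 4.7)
The plan is to combine Proposition~\ref{prop:7} with a pointwise martingale argument. The first assertion is an immediate consequence of the three preparatory lemmas, and the second follows by coupling the conditional bias--variance identity for Bernoulli variables with Doob's convergence theorem.

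For the almost sure convergence of $H_n^{IBV}$: Lemmas~\ref{lem:4}, \ref{lem:5} and~\ref{lem:6} supply exactly the four hypotheses of Proposition~\ref{prop:7} ($\mathfrak{P}$-uniform integrability, supermartingale property, $\mathfrak{P}$-continuity, and the zero-set equality $\{\mathcal{H}^{IBV}=0\}=\{\mathcal{G}^{IBV}=0\}$), so that Proposition directly yields $H_n^{IBV}\xrightarrow[n\rightarrow\infty]{a.s.}0$.

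To handle $I_n:=\int_{\mathbb{X}}({\bf 1}_{\xi(u)\geq T}-p_n(u))^2\,\mu(du)$, I would first observe that since $p_n(u)=\mathbb{E}[{\bf 1}_{\xi(u)\geq T}\mid\mathcal{F}_n]$, expanding the square gives the conditional bias--variance identity
\[
\mathbb{E}\bigl[({\bf 1}_{\xi(u)\geq T}-p_n(u))^2 \,\bigl|\, \mathcal{F}_n\bigr]=p_n(u)(1-p_n(u)).
\]
Fubini then yields $\mathbb{E}[I_n\mid\mathcal{F}_n]=H_n^{IBV}$, hence $\mathbb{E}[I_n]=\mathbb{E}[H_n^{IBV}]$. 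Because $H_n^{IBV}\leq \tfrac{1}{4}\mu(\mathbb{X})$, dominated convergence combined with the first assertion gives $\mathbb{E}[I_n]\to 0$, establishing the $L^1$ part.

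The main obstacle is upgrading to almost sure convergence of $I_n$, since $L^1$ convergence alone is insufficient. Here I would invoke Doob's theorem pointwise in $u$: the bounded martingale $(p_n(u))_n$ converges $P$-a.s.\ to $p_\infty(u):=\mathbb{E}[{\bf 1}_{\xi(u)\geq T}\mid\mathcal{F}_\infty]$. Fubini applied to the $\mu\otimes P$-null exceptional set, followed by dominated convergence in $u$ (integrand bounded by $1$), implies that $I_n\to I_\infty:=\int_{\mathbb{X}}({\bf 1}_{\xi(u)\geq T}-p_\infty(u))^2\,\mu(du)$ a.s., and by the same reasoning $H_n^{IBV}\to\int_{\mathbb{X}} p_\infty(u)(1-p_\infty(u))\,\mu(du)$ a.s. The first assertion forces the latter integral to vanish a.s., so $p_\infty(u)\in\{0,1\}$ for $\mu\otimes P$-a.e.\ $(u,\omega)$. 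The final subtlety is a standard conditional-expectation argument: a $[0,1]$-valued random variable whose $\mathcal{F}_\infty$-conditional expectation is $\{0,1\}$-valued a.s.\ must itself coincide with that conditional expectation a.s.; hence ${\bf 1}_{\xi(u)\geq T}=p_\infty(u)$ a.s.\ for $\mu$-a.e.\ $u$, giving $I_\infty=0$ a.s.\ and completing the proof.
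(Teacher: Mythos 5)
Your proof is correct, and for the second assertion it follows a genuinely different route than the paper. The paper identifies the limit through the Gaussian structure of the posterior: from $\mathcal{H}^{IBV}\left(P_{\infty}^{\xi}\right)\overset{a.s.}{=}0$ it deduces (via the arguments behind Lemmas \ref{lem:13} and \ref{lem:6}) that $k_{\infty}\left(u,u\right)=0$ for $\mu$-almost every $u$, so that the limiting conditional law degenerates and $p_{\infty}\left(u\right)\overset{a.s.}{=}{\bf 1}_{\Gamma\left(\xi\right)}\left(u\right)$; the pointwise convergence $p_{n}\left(u\right)\rightarrow p_{\infty}\left(u\right)$ is then taken from the Portmanteau-based decomposition of Lemma \ref{lem:13}, and dominated convergence finishes both the a.s.\ and $L^{1}$ claims. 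You instead bypass Lemma \ref{lem:13} at this stage: since $p_{n}\left(u\right)=\mathbb{E}\left[{\bf 1}_{\xi\left(u\right)\geq T}|\mathcal{F}_{n}\right]$ along the \emph{filtration} $\left(\mathcal{F}_{n}\right)$, L\'evy's upward martingale convergence gives $p_{n}\left(u\right)\rightarrow p_{\infty}\left(u\right)$ a.s.\ for each $u$ (the general Portmanteau machinery of Lemma \ref{lem:13} is needed only for $\mathfrak{P}$-continuity, where the conditioning $\sigma$-algebras are not nested), and you identify $p_{\infty}$ via $\int p_{\infty}\left(1-p_{\infty}\right)d\mu\overset{a.s.}{=}0$ together with the elementary fact that a $[0,1]$-valued variable whose $\mathcal{F}_{\infty}$-conditional expectation is $\left\{ 0,1\right\}$-valued must equal it a.s.; your bias--variance identity $\mathbb{E}\left[I_{n}|\mathcal{F}_{n}\right]=H_{n}^{IBV}$ also gives the $L^{1}$ statement directly. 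Your route is shorter and avoids relying on the claim $k_{\infty}\left(u,u\right)=0$ $\mu$-a.e.\ (whose justification in the paper is only by reference to earlier proofs), while the paper's argument yields the stronger structural conclusion that the posterior covariance degenerates. The only point to make explicit in your write-up is the joint $\left(u,\omega\right)$-measurability of $p_{n}$ and $p_{\infty}$ needed for your two Fubini steps; this holds because $p_{n}\left(u\right)=\mathcal{N}_{d}\left(m_{n}\left(u\right),k_{n}\left(u,u\right)\right)\left({\bf T}\right)$ with $m_{n},k_{n}$ continuous in $u$ and measurable in $\omega$ (and likewise for $n=\infty$ by Proposition \ref{prop:4}), so it is a technicality rather than a gap.
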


\subsection{Excursion Measure Variance (EMV)}

Let $\xi$ be a Gaussian random element in $\b S$. Another popular
measure of the residual uncertainty with respect to the excursion
set that is used in \cite{key-1}, is the variance of the excursion
volume or excursion measure variance (EMV) defined by
\[
H_{n}^{EMV}:=\b E\left[\left(\alpha\left(\xi\right)-\b E\left[\alpha\left(\xi\right)|\c F_{n}\right]\right)^{2}|\c F_{n}\right]=\text{Var}\left(\alpha\left(\xi\right)|\c F_{n}\right),
\]
where $\mathcal{F}_{n}$ is the $\sigma$-algebra generated by $n\in\b N$
observations and $\alpha\left(\xi\right):=\mu\left(\Gamma\left(\xi\right)\right)$.
More generally, we can in this case define the corresponding uncertainty
functional $\mathcal{H}^{EMV}$ by the mapping 
\begin{align*}
\mathcal{H}^{EMV}:\mathbb{M} & \rightarrow\left[0,\infty\right)\\
\nu & \mapsto\int_{\mathbb{S}}\left(\alpha\left(f\right)-\bar{\alpha}_{\nu}\right)^{2}\nu\left(df\right),
\end{align*}
where $\bar{\alpha}_{\nu}:=\int_{\mathbb{S}}\alpha\left(f\right)\nu\left(df\right)$.
Note that $\mathcal{H}^{EMV}$ is clearly an uncertainty functional
on $\mathbb{M}$. Furthermore, let $\mathcal{G}^{EMV}$ be the associated
maximal expected gain functional. 

We want again to use Proposition \ref{prop:7} to show 
\[
H_{n}^{EMV}\xrightarrow[n\rightarrow\infty]{a.s.}0
\]
for any $\epsilon$-quasi SUR sequential design $\left(X_{n}\right)_{n\geq1}$
for $\mathcal{H}^{EMV}$. The ideas for the proofs are again based
on results shown in \cite{key-1} and can be found in Appendix \ref{subsec:app:sec5}.
\begin{lem}
\label{lem:7}$\mathcal{H}^{EMV}$ is $\mathfrak{P}$-uniformly integrable
and has the supermartingale property.
\end{lem}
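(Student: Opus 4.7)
The plan is to reduce both claims to the simple observation that $\alpha$ is a bounded measurable functional on $\mathbb{S}$, so that the conditional variance interpretation of $\mathcal{H}^{EMV}(P_n^\xi)$ gives both properties essentially for free, exactly as in the scalar case in \cite{key-1}.

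First I would verify the setup. Since $\mu$ is a finite measure on $\mathbb{X}$, for every $f \in \mathbb{S} = \mathcal{C}(\mathbb{X};\mathbb{R}^d)$ we have $0 \le \alpha(f) = \mu(\Gamma(f)) \le \mu(\mathbb{X}) < \infty$. Measurability of $\alpha : (\mathbb{S},\mathcal{S}) \to (\mathbb{R},\mathcal{B}(\mathbb{R}))$ follows from Fubini: the map $(f,u) \mapsto \mathbf{1}_{\{f(u) \in {\bf T}\}}$ is jointly measurable because $\delta_u : f \mapsto f(u)$ is continuous (hence $\mathcal{S}/\mathcal{B}(\mathbb{R}^d)$-measurable) and ${\bf T}$ is closed, so $\alpha(f) = \int_{\mathbb{X}} \mathbf{1}_{\{f(u) \in {\bf T}\}}\,\mu(du)$ is $\mathcal{S}$-measurable. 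In particular $\alpha(\xi)$ is a well-defined, bounded random variable for any Gaussian random element $\xi$ in $\mathbb{S}$, and $\bar\alpha_\nu$ and $\mathcal{H}^{EMV}(\nu)$ are well-defined for every $\nu \in \mathbb{M}$.

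For $\mathfrak{P}$-uniform integrability, I would note that for any $\bm{\nu} \in \mathfrak{P}(\xi)$, written as $\bm{\nu} = P(\xi \in \cdot \mid \mathcal{G})$ for some sub-$\sigma$-algebra $\mathcal{G}$, one has
\[
\mathcal{H}^{EMV}(\bm{\nu}) = \mathrm{Var}\bigl(\alpha(\xi) \bigm| \mathcal{G}\bigr) \le \mathbb{E}\bigl[\alpha(\xi)^2 \bigm|\mathcal{G}\bigr] \le \mu(\mathbb{X})^2
\]
almost surely. Since the family $(\mathcal{H}^{EMV}(\bm{\nu}))_{\bm{\nu} \in \mathfrak{P}(\xi)}$ is uniformly bounded by a constant, it is a fortiori uniformly integrable.

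For the supermartingale property, I would decompose
\[
H_n^{EMV} = \mathbb{E}\bigl[\alpha(\xi)^2 \bigm| \mathcal{F}_n\bigr] - \bigl(\mathbb{E}\bigl[\alpha(\xi) \bigm| \mathcal{F}_n\bigr]\bigr)^2.
\]
The first term is a true martingale by the tower property, while the second is a submartingale by conditional Jensen applied to the convex map $x \mapsto x^2$ (and finite because $\alpha(\xi)$ is bounded). Taking conditional expectation given $\mathcal{F}_n$ then yields
\[
\mathbb{E}\bigl[H_{n+1}^{EMV} \bigm| \mathcal{F}_n\bigr] = \mathbb{E}\bigl[\alpha(\xi)^2 \bigm|\mathcal{F}_n\bigr] - \mathbb{E}\Bigl[\bigl(\mathbb{E}[\alpha(\xi)\mid \mathcal{F}_{n+1}]\bigr)^2 \Bigm| \mathcal{F}_n\Bigr] \le H_n^{EMV}.
\]
Adaptedness and integrability of $H_n^{EMV}$ follow from $\mathcal{F}_n$-measurability of the conditional moments and the uniform bound above. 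Since the argument does not involve conditioning on the specific pointwise observations but only on the ambient filtration and the bounded functional $\alpha$, the vector-valued nature of $\xi$ plays no role and there is no real obstacle: the only point to be careful about is the measurability of $\alpha$, which is handled by the Fubini argument in the first paragraph.
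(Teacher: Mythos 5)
Your proposal is correct and follows essentially the same route as the paper: the uniform bound $\mathcal{H}^{EMV}(\bm{\nu})\leq\mu(\mathbb{X})^{2}$ gives $\mathfrak{P}$-uniform integrability, and the supermartingale property follows from the tower property combined with conditional Jensen applied to $x\mapsto x^{2}$, exactly as in the paper's proof. Your additional Fubini argument for the measurability of $\alpha$ is a harmless supplement that the paper leaves implicit.
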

Using Fubini's Theorem one can see that for the $\mathfrak{P}$-continuity of $\mathcal{H}^{EMV}$ it is necessary to deal with the covariance of ${\bf 1}_{\Gamma\left(\xi\right)}\left(u_1\right)$ and ${\bf 1}_{\Gamma\left(\xi\right)}\left(u_2\right)$ at two points $u_1,u_2 \in \mathbb{X}$. We need a similar result as 2. in \ref{lem:13} that is given by the next Lemma.
\begin{lem}
\label{lem:14}
For $J_{2}^{i}\subseteq J_{1}^{i}\subseteq\{1,...,d\}$ with $i=1,2$ it holds
\begin{align*}
 \text{Cov}\left({\bf 1}_{\Gamma\left(\xi\right)}\left(u_{1}\right),{\bf 1}_{\Gamma\left(\xi\right)}\left(u_{2}\right)|\c G_{n}\right)
\xrightarrow[n\rightarrow\infty]{a.s.} 
\text{Cov}\left({\bf 1}_{\Gamma\left(\xi\right)}\left(u_{1}\right),{\bf 1}_{\Gamma\left(\xi\right)}\left(u_{2}\right)|\c G_{\infty}\right)
\end{align*}
for all $u_1 \in B_{J_{1}^{1},J_{2}^{1}}$ and $u_2 \in B_{J_{1}^{2},J_{2}^{2}}$
\end{lem}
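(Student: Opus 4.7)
\textbf{Proof plan for Lemma \ref{lem:14}.}
The strategy is to decompose the conditional covariance as
\[
\text{Cov}\left({\bf 1}_{\Gamma\left(\xi\right)}\left(u_{1}\right),{\bf 1}_{\Gamma\left(\xi\right)}\left(u_{2}\right)|\c G_{n}\right)=P\left(\xi\left(u_{1}\right)\in{\bf T},\xi\left(u_{2}\right)\in{\bf T}|\c G_{n}\right)-P\left(\xi\left(u_{1}\right)\in{\bf T}|\c G_{n}\right)P\left(\xi\left(u_{2}\right)\in{\bf T}|\c G_{n}\right),
\]
handle the product of marginals directly via Lemma \ref{lem:13}(2), and reduce the joint term to a lower-dimensional Portmanteau argument. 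The conditional distribution of $\left(\xi\left(u_{1}\right),\xi\left(u_{2}\right)\right)$ given $\c G_{n}$ is a $2d$-variate Gaussian whose mean vector and covariance matrix are built from $m_{n}\left(u_{a}\right)$ and $k_{n}\left(u_{a},u_{b}\right)$ for $a,b\in\left\{ 1,2\right\} $; by the almost sure uniform convergence of $m_{n}\rightarrow m_{\infty}$ and $k_{n}\rightarrow k_{\infty}$, these parameters converge almost surely, so the corresponding Gaussians on $\b R^{2d}$ converge weakly.

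A direct application of Portmanteau to the orthant ${\bf T}\times{\bf T}\subset\b R^{2d}$ would fail, because for $j\in J_{2}^{i}$ the limiting marginal of $\xi_{j}\left(u_{i}\right)$ is a Dirac mass at $\t_{j}$ and so charges $\partial{\bf T}$. The key observation is that for $j\in J_{2}^{i}\subseteq J_{1}^{i}$ one has $k\left(u_{i},u_{i}\right)_{jj}=0$, so $\xi_{j}\left(u_{i}\right)=m\left(u_{i}\right)_{j}$ is already deterministic under the prior; consequently $m_{n}\left(u_{i}\right)_{j}=m\left(u_{i}\right)_{j}$ and $k_{n}\left(u_{i},u_{i}\right)_{jj}=0$ almost surely for every $n$, including $n=\infty$. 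Combined with $m_{\infty}\left(u_{i}\right)_{j}=\t_{j}$, this forces $m\left(u_{i}\right)_{j}=\t_{j}$, hence $\xi_{j}\left(u_{i}\right)=\t_{j}$ deterministically, so $\left\{ \xi_{j}\left(u_{i}\right)\geq\t_{j}\right\} $ has probability one under every $\c G_{n}$. Setting $I_{i}:=\left\{ 1,\dots,d\right\} \setminus J_{2}^{i}$, we may therefore rewrite
\[
P\left(\xi\left(u_{1}\right)\in{\bf T},\xi\left(u_{2}\right)\in{\bf T}|\c G_{n}\right)=P\left(\xi_{j}\left(u_{1}\right)\geq\t_{j}\,\forall j\in I_{1};\;\xi_{j}\left(u_{2}\right)\geq\t_{j}\,\forall j\in I_{2}\mid\c G_{n}\right)\quad\text{a.s.}
\]

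On this reduced $\left(\left|I_{1}\right|+\left|I_{2}\right|\right)$-dimensional orthant the boundary condition is satisfied: for each $j\in I_{i}$ the maximality of $J_{2}^{i}$ in the definition of $B_{J_{1}^{i},J_{2}^{i}}$ guarantees either $k_{\infty}\left(u_{i},u_{i}\right)_{jj}>0$ (non-degenerate Gaussian marginal) or $m_{\infty}\left(u_{i}\right)_{j}\neq\t_{j}$ (Dirac mass strictly away from $\t_{j}$), and in both cases $P\left(\xi_{j}\left(u_{i}\right)=\t_{j}|\c G_{\infty}\right)=0$. A union bound over the $\left|I_{1}\right|+\left|I_{2}\right|$ boundary hyperplanes then shows that the boundary of the reduced orthant is a null set under the limiting joint Gaussian, and the restriction of the weakly convergent $2d$-dimensional Gaussians to the coordinates in $I_{1}\cup I_{2}$ still converges weakly, so Portmanteau yields almost sure convergence of the joint probability. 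Combined with Lemma \ref{lem:13}(2) applied to each marginal factor, this gives the asserted convergence of the covariance. The delicate step is the reduction: stripping the coordinates in $J_{2}^{i}$ requires that these are already deterministic for every $n$, which is precisely what the inclusion $J_{2}^{i}\subseteq J_{1}^{i}$ -- itself the content of Lemma \ref{lem:13}(1) up to a $\mu$-null set -- provides.
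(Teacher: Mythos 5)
Your proposal is correct and follows essentially the same route as the paper's proof: decompose the conditional covariance into the joint excursion probability minus the product of marginals, treat the marginals via Lemma \ref{lem:13}(2), and handle the joint term by a Portmanteau argument for the conditional Gaussian law of $\left(\xi\left(u_{1}\right)^{\top},\xi\left(u_{2}\right)^{\top}\right)^{\top}$ on a reduced orthant whose boundary is shown to be a null set under the limit by a union bound over the coordinates where $k_{\infty}\left(u_i,u_i\right)_{jj}>0$ or $m_{\infty}\left(u_i\right)_{j}\neq \t_{j}$. The only (cosmetic) difference is that you strip only the coordinates in $J_{2}^{i}$ and keep those in $J_{1}^{i}\setminus J_{2}^{i}$ inside the Portmanteau set (their limiting Dirac masses sit strictly away from the threshold), whereas the paper factors out all of $J_{1}^{i}$ as indicator terms $\prod_{j\in J_{1}^{i}\setminus J_{2}^{i}}\I_{m\left(u_{i}\right)_{j}\geq \t_{j}}$ and applies Portmanteau only on $\left(J_{1}^{i}\right)^{c}$; both variants are valid.
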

\begin{lem}
\label{lem:8}$\mathcal{H}^{EMV}$ is $\mathfrak{P}$-continuous. 
\end{lem}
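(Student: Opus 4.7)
The plan is to mirror the structure of the proof of Lemma \ref{lem:5}, but with the product $\mathbb{X}\times\mathbb{X}$ replacing $\mathbb{X}$, exploiting that the EMV functional can be written as a double integral of conditional covariances. Concretely, for $\bm{\nu}_{n}=\mathcal{GP}_{d}(m_{n},k_{n})=P(\xi\in\cdot\,|\,\mathcal{G}_{n})$ with $\bm{\nu}_{n}\to\bm{\nu}_{\infty}$ almost surely in the sense of Section \ref{sec:3}, I would first use Fubini's Theorem (justified by boundedness and joint measurability of the integrand) to write
\[
\mathcal{H}^{EMV}(\bm{\nu}_{n})=\mathrm{Var}(\alpha(\xi)\,|\,\mathcal{G}_{n})=\int_{\mathbb{X}\times\mathbb{X}}\mathrm{Cov}\bigl({\bf 1}_{\Gamma(\xi)}(u_{1}),{\bf 1}_{\Gamma(\xi)}(u_{2})\,|\,\mathcal{G}_{n}\bigr)\,(\mu\otimes\mu)(du_{1},du_{2}).
\]
This recasts the problem as the convergence of a double integral of conditional covariances of indicator functions, which is exactly the quantity controlled by Lemma \ref{lem:14}.

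Next, I would fix an $\omega$ in the almost-sure convergence event and decompose $\mathbb{X}\times\mathbb{X}$ into the finite disjoint union of rectangles $B_{J_{1}^{1},J_{2}^{1}}(\omega)\times B_{J_{1}^{2},J_{2}^{2}}(\omega)$ induced by the decomposition of $\mathbb{X}$ from Lemma \ref{lem:13}, where $J_{1}^{i},J_{2}^{i}\subseteq\{1,\dots,d\}$ for $i=1,2$. By part 1 of Lemma \ref{lem:13}, whenever $J_{2}^{i}\not\subseteq J_{1}^{i}$ for some $i\in\{1,2\}$, one of the two factors is a $\mu$-null set almost surely, hence the rectangle is $(\mu\otimes\mu)$-null almost surely. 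Consequently, almost surely the double integral defining $\mathcal{H}^{EMV}(\bm{\nu}_{n})$ reduces to the sum (over finitely many indices) of integrals on the ``good'' rectangles for which $J_{2}^{i}\subseteq J_{1}^{i}$ holds for both $i=1,2$.

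On each good rectangle, Lemma \ref{lem:14} provides the almost sure pointwise convergence of the integrand
\[
\mathrm{Cov}\bigl({\bf 1}_{\Gamma(\xi)}(u_{1}),{\bf 1}_{\Gamma(\xi)}(u_{2})\,|\,\mathcal{G}_{n}\bigr)\xrightarrow[n\to\infty]{a.s.}\mathrm{Cov}\bigl({\bf 1}_{\Gamma(\xi)}(u_{1}),{\bf 1}_{\Gamma(\xi)}(u_{2})\,|\,\mathcal{G}_{\infty}\bigr).
\]
Since these conditional covariances involve indicator functions and are therefore uniformly bounded in absolute value by $1/4$ (by Cauchy--Schwarz together with the Bernoulli variance bound), and $\mu\otimes\mu$ is finite, the Dominated Convergence Theorem yields the convergence of the integrals on each of the finitely many good rectangles. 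Summing over them and adding the null contributions from the bad rectangles delivers $\mathcal{H}^{EMV}(\bm{\nu}_{n})\xrightarrow[n\to\infty]{a.s.}\mathcal{H}^{EMV}(\bm{\nu}_{\infty})$.

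The main obstacle I anticipate is bookkeeping: one must ensure that the Fubini identification, the disjoint decomposition of $\mathbb{X}\times\mathbb{X}$, and the pointwise convergence from Lemma \ref{lem:14} all hold on a single full-measure event in $\Omega$ that does not depend on $(u_{1},u_{2})$; since the sets $B_{J_{1},J_{2}}(\omega)$ themselves depend on $\omega$ via $m_{\infty},k_{\infty}$, care is required to extract a common almost-sure set across the finitely many combinations of $(J_{1}^{1},J_{2}^{1},J_{1}^{2},J_{2}^{2})$. Beyond this, the argument is a straightforward two-dimensional analogue of the one used for the IBV case, and the uniform boundedness of the indicator covariances keeps the dominated convergence step essentially automatic.
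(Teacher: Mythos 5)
Your proposal is correct and follows essentially the same route as the paper's proof: rewriting $\mathcal{H}^{EMV}(\bm{\nu}_{n})$ via Fubini as a double integral of conditional covariances, discarding the rectangles $B_{J_{1}^{1},J_{2}^{1}}\times B_{J_{1}^{2},J_{2}^{2}}$ with $J_{2}^{i}\not\subseteq J_{1}^{i}$ as $(\mu\otimes\mu)$-null by Lemma \ref{lem:13}, and applying Lemma \ref{lem:14} together with dominated convergence on the remaining rectangles. Your extra remark on fixing a single full-measure event across the finitely many index combinations is a sensible refinement of the same argument, not a departure from it.
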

\begin{proof}
Using Fubini's Theorem we have
\begin{align*}
\mathcal{H}^{EMV}\left(\bm{\nu}_{n}\right)
= & \b E\left[\left(\alpha\left(\xi\right)-\b E\left[\alpha\left(\xi\right)|\c G_{n}\right]\right)^{2}|\c G_{n}\right]\\
= & \b E\left[\left(\int_{\b X}{\bf 1}_{\Gamma\left(\xi\right)}\left(u\right)\mu\left(du\right)\right)^{2}|\c G_{n}\right]-\b E\left[\int_{\b X}{\bf 1}_{\Gamma\left(\xi\right)}\left(u\right)\mu\left(du\right)|\c G_{n}\right]^{2}\\
= & \int_{\b X}\int_{\b X}\b E\left[{\bf 1}_{\Gamma\left(\xi\right)}\left(u_{1}\right){\bf 1}_{\Gamma\left(\xi\right)}\left(u_{2}\right)|\c G_{n}\right]\mu\left(du_{1}\right)\mu\left(du_{2}\right)\\
 & -\int_{\b X}\b E\left[{\bf 1}_{\Gamma\left(\xi\right)}\left(u_{1}\right)|\c G_{n}\right]\mu\left(du_{1}\right)\int_{\b X}\b E\left[{\bf 1}_{\Gamma\left(\xi\right)}\left(u_{2}\right)|\c G_{n}\right]\mu\left(du_{2}\right)\\
= & \int_{\b X}\int_{\b X}\text{Cov}\left({\bf 1}_{\Gamma\left(\xi\right)}\left(u_{1}\right),{\bf 1}_{\Gamma\left(\xi\right)}\left(u_{2}\right)|\c G_{n}\right)\mu\left(du_{1}\right)\mu\left(du_{2}\right).
\end{align*}
We can now use the same decomposition $\b X=\bigcup_{J_{1},J_{2}\subseteq\left\{ 1,...,d\right\} }B_{J_{1},J_{2}}$
as in Lemma \ref{lem:13} and already know that $B_{J_{1},J_{2}}$
is almost surely a $\mu$- null set if $J_{2}\not\subseteq J_{1}$.
Hence the claim follows by Lemma \ref{lem:14} and the Dominated Convergence as
\begin{align*}
 & \int_{B_{J_{1}^{2},J_{2}^{2}}\left(\omega\right)}\int_{B_{J_{1}^{1},J_{2}^{1}}\left(\omega\right)}\text{Cov}\left({\bf 1}_{\Gamma\left(\xi\right)}\left(u_{1}\right),{\bf 1}_{\Gamma\left(\xi\right)}\left(u_{2}\right)|\c G_{n}\right)\left(\omega\right)\mu\left(du_{1}\right)\mu\left(du_{2}\right)\\
\xrightarrow[n\rightarrow\infty]{} & \int_{B_{J_{1}^{2},J_{2}^{2}}\left(\omega\right)}\int_{B_{J_{1}^{1},J_{2}^{1}}\left(\omega\right)}\text{Cov}\left({\bf 1}_{\Gamma\left(\xi\right)}\left(u_{1}\right),{\bf 1}_{\Gamma\left(\xi\right)}\left(u_{2}\right)|\c G_{\infty}\right)\left(\omega\right)\mu\left(du_{1}\right)\mu\left(du_{2}\right)
\end{align*}
for almost all $\omega\in\Omega$.
\end{proof}

To apply Proposition \ref{prop:7} it remains to show that $\mathcal{H}^{EMV}$ and $\mathcal{G}^{EMV}$ vanish on the same subset of $\mathbb{M}$. It can be deduced by the same steps as in part (f) in the proof of Theorem 4.3 in \cite{key-1} that $\alpha\left(\xi\right)-\b E\text{\ensuremath{\left[\alpha\left(\xi\right)\right]}}$ is orthogonal to $L^{2}\left(\Omega,\sigma\left(Z\left(x\right)\right),P\right)$ for all $x\in \mathbb{X}$, where $Z\left(x\right)=\xi\left(x\right)+\tau\left(x\right)U$, $U\sim\c N_{d}\left(0,I_{d}\right)$
independent of $\xi$, since also for a multivariate Gaussian process $\xi$ we have that $\alpha(\xi)$ is only a random variable. The bottleneck is to conclude that $\alpha\left(\xi\right)-\b E\text{\ensuremath{\left[\alpha\left(\xi\right)\right]}}$
is also orthogonal to $L^{2}\left(\Omega,\sigma\left(\xi\left(x\right)\right),P\right)$, which can be handled by the following Lemma. 
\begin{lem}
\label{lem:aux}
Let $V=\left(V_{1},...,V_{d}\right)$ and $W=\left(W_{1},...,W_{d}\right)$
be independent Gaussian random vectors in $\left(\b R^{d},\c B\left(\b R^{d}\right)\right)$
and $U$ be a random variable in $\left(\b R,\c B\left(\b R\right)\right)$,
all defined on the probability space $\left(\Omega,\mathcal{F},P\right)$.
Assume that $W$ is independent of $\left(U,V\right)$ and that $U$
is orthogonal to $L^{2}\left(\Omega,\sigma\left(V+W\right),P\right)$,
that means for every $\sigma\left(V+W\right)$-measurable and square-integrable
random variable $X:\Omega\rightarrow\b R$ we have $\mathbb{E}\left[UX\right]=0$.
Then is $U$ also orthogonal to $L^{2}\left(\Omega,\sigma\left(V\right),P\right)$.
\end{lem}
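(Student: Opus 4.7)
The plan is to reduce the problem to showing that $\mathbb{E}[U \mid V] = 0$ almost surely. Since $U \in L^{2}(\Omega,\mathcal{F},P)$ (implicit in the orthogonality hypothesis, as the inner product $\mathbb{E}[UX]$ must be defined for every $X \in L^{2}(\Omega,\sigma(V+W),P)$), the conditional expectation $h(V) := \mathbb{E}[U\mid V]$ is well defined and lies in $L^{2}$ by Jensen's inequality. Once $h(V) = 0$ a.s., for any $Y = g(V) \in L^{2}(\Omega,\sigma(V),P)$ the tower property gives $\mathbb{E}[UY] = \mathbb{E}[g(V)\, h(V)] = 0$, which is the desired orthogonality.

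To prove $h(V) = 0$, I first apply the hypothesis to complex-exponential test functions: for any $t \in \mathbb{R}^{d}$, the real and imaginary parts of $e^{i\langle t,V+W\rangle}$ are bounded and $\sigma(V+W)$-measurable, hence in $L^{2}(\Omega,\sigma(V+W),P)$, so
\[
\mathbb{E}\bigl[U\, e^{i\langle t,V+W\rangle}\bigr] \;=\; 0 \qquad \text{for every } t \in \mathbb{R}^{d}.
\]
Conditioning on $(V,W)$ and using $W \perp (U,V)$ gives $\mathbb{E}[U\mid V,W] = \mathbb{E}[U\mid V] = h(V)$, and the (inherited) independence of $V$ and $W$ lets me factor the resulting expectation as
\[
\mathbb{E}\bigl[h(V)\, e^{i\langle t,V\rangle}\bigr]\cdot \varphi_{W}(t) \;=\; 0,
\]
where $\varphi_{W}$ denotes the characteristic function of $W$.

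Since $W$ is Gaussian, $\varphi_{W}(t)$ is a non-vanishing exponential in $t$, so dividing by it yields $\mathbb{E}[h(V)\, e^{i\langle t,V\rangle}] = 0$ for every $t\in\mathbb{R}^{d}$. Equivalently, the finite signed measure $h\cdot P_{V}$ on $\mathbb{R}^{d}$ (finite because $h \in L^{2}(P_{V})\subset L^{1}(P_{V})$) has identically vanishing Fourier transform. By uniqueness of Fourier transforms for finite signed measures, $h\cdot P_{V} = 0$, hence $h = 0$ $P_{V}$-almost surely, completing the argument. The only structural ingredient is the non-vanishing of $\varphi_{W}$, which is precisely why the Gaussianity of $W$ is used; apart from this the argument is routine conditioning and Fourier inversion, so no step is expected to present a real obstacle.
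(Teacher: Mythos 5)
Your proof is correct, and it takes a genuinely different route from the paper. You reduce the claim to showing $\mathbb{E}[U\mid V]=0$ a.s., test the orthogonality hypothesis against the bounded functions $e^{i\langle t,V+W\rangle}$, use $\mathbb{E}[U\mid V,W]=\mathbb{E}[U\mid V]$ (valid since $W\perp (U,V)$), factor out $\varphi_W(t)\neq 0$, and invoke uniqueness of the Fourier transform of the finite signed measure $h\,dP_V$ — a clean deconvolution argument. The paper instead argues by contradiction through moments: Fernique's theorem gives exponential integrability of $\|V\|$, hence density of polynomials in $L^2(P^V)$, so non-orthogonality to $\sigma(V)$ would produce a monomial $V^\alpha$ of minimal total degree with $\mathbb{E}[UV^\alpha]\neq 0$; a binomial expansion of $(V+W)^\alpha$ together with the independence of $W$ and the minimality of the degree then shows $\mathbb{E}[U(V+W)^\alpha]=\mathbb{E}[UV^\alpha]\neq 0$, contradicting orthogonality to $L^2(\sigma(V+W))$. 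Your approach is shorter and needs strictly less structure: Gaussianity of $V$ plays no role, and of $W$ you only use that $\varphi_W$ never vanishes, so the lemma generalizes to any noise with non-vanishing characteristic function; you also obtain the stronger conclusion $\mathbb{E}[U\mid V]=0$ explicitly. The paper's moment argument, by contrast, stays with real polynomial test functions but genuinely relies on both Gaussian hypotheses (exponential moments for polynomial density, all moments of $W$). One bookkeeping remark: your step $\mathbb{E}[Ug(V)]=\mathbb{E}[h(V)g(V)]$ for arbitrary $g(V)\in L^2$ needs $U\in L^2$ (or at least integrability of the products), which you flag as implicit; the paper's proof makes the same implicit assumption through its use of $\mathbb{E}[U^2]^{1/2}$ in a Cauchy--Schwarz bound, and in the application $U$ is bounded, so this is not a gap relative to the paper.
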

\begin{lem}
\label{lem:9}
\begin{align*}
\left\{ \nu\in\mathbb{M}:\mathcal{H}^{EMV}\left(\nu\right)=0\right\}
=  \left\{\nu\in\mathbb{M}:\mathcal{G}^{EMV}\left(\nu\right)=0\right\}. 
\end{align*}
\end{lem}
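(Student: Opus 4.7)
The plan is to prove the nontrivial inclusion $\left\{\nu\in\b M:\c G^{EMV}(\nu)=0\right\}\subseteq\left\{\nu\in\b M:\c H^{EMV}(\nu)=0\right\}$; the reverse inclusion holds since the supermartingale property yields $0\leq\c J_{x}(\nu)\leq\c H^{EMV}(\nu)$ for every $x\in\b X$, so $\c H^{EMV}(\nu)=0$ immediately forces $\c G_{x}(\nu)=\c H^{EMV}(\nu)-\c J_{x}(\nu)=0$ for all $x$.

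Fixing $\nu=P^{\xi}\in\b M$ with $\c G^{EMV}(\nu)=0$, I would first translate the vanishing of $\c G_{x}(\nu)$ into an orthogonality statement. Applying the law of total variance to $\alpha(\xi)$ conditionally on $Z(x)=\xi(x)+\tau(x)U'$ yields
\[
\c G_{x}(\nu)=\text{Var}(\alpha(\xi))-\b E[\text{Var}(\alpha(\xi)|Z(x))]=\text{Var}(\b E[\alpha(\xi)|Z(x)]),
\]
so the hypothesis means $\b E[\alpha(\xi)|Z(x)]$ is almost surely equal to $\b E[\alpha(\xi)]$, i.e.\ $\alpha(\xi)-\b E[\alpha(\xi)]$ is orthogonal to $L^{2}(\Omega,\sigma(Z(x)),P)$ for every $x\in\b X$.

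Next I would invoke Lemma~\ref{lem:aux} with $V:=\xi(x)$, $W:=\tau(x)U'$, and the scalar ``$U$'' of that lemma playing the role of $\alpha(\xi)-\b E[\alpha(\xi)]$; both $V$ and $W$ are Gaussian $\b R^{d}$-valued, and independence of $W$ from $(\alpha(\xi)-\b E[\alpha(\xi)],V)$ follows from independence of the observation noise from $\xi$. The lemma upgrades the previous orthogonality to $\alpha(\xi)-\b E[\alpha(\xi)]\perp L^{2}(\Omega,\sigma(\xi(x)),P)$ for every $x\in\b X$. Testing this against the bounded $\sigma(\xi(x))$-measurable indicator $\I_{\xi(x)\in{\bf T}}$ gives $\text{Cov}(\alpha(\xi),\I_{\xi(x)\in{\bf T}})=0$ for all $x$.

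Finally, expanding $\alpha(\xi)=\int_{\b X}\I_{\xi(y)\in{\bf T}}\mu(dy)$ and applying Fubini reduces the previous covariance to $\int_{\b X}\text{Cov}(\I_{\xi(y)\in{\bf T}},\I_{\xi(x)\in{\bf T}})\mu(dy)=0$ for each $x$; integrating over $x$ against $\mu$ with one more Fubini exchange collapses the resulting double integral to $\text{Var}(\alpha(\xi))=\c H^{EMV}(\nu)$, yielding $\c H^{EMV}(\nu)=0$ as required. The main obstacle is the middle step: I need to instantiate Lemma~\ref{lem:aux} so that the independence structure lines up correctly, which is precisely where the Gaussianity of the observation noise and its independence from $\xi$ do the heavy lifting; the Fubini exchanges at the end are routine given boundedness of the indicators and finiteness of $\mu$.
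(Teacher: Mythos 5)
Your proposal is correct and follows essentially the same route the paper intends: the law of total variance gives orthogonality of $\alpha\left(\xi\right)-\mathbb{E}\left[\alpha\left(\xi\right)\right]$ to $L^{2}\left(\Omega,\sigma\left(Z\left(x\right)\right),P\right)$, Lemma~\ref{lem:aux} (the paper's stated ``bottleneck'' device) upgrades this to $L^{2}\left(\Omega,\sigma\left(\xi\left(x\right)\right),P\right)$, and testing against $\mathbf{1}_{\xi\left(x\right)\in{\bf T}}$ with two Fubini exchanges collapses the double covariance integral to $\mathrm{Var}\left(\alpha\left(\xi\right)\right)=\mathcal{H}^{EMV}\left(\nu\right)=0$. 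Your write-up simply makes explicit the steps the paper delegates to part (f) of the proof of Theorem 4.3 in \cite{key-1}.
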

Combining Lemmas \ref{lem:7}, \ref{lem:8} and \ref{lem:9} we get by means of Proposition \ref{prop:7} and the same martingale convergence arguments for $\left(\mathbb{E}\left[\alpha\left(\xi\right)|\c F_{n}\right]\right)_{n\in \mathbb{N}}$ as in the proof of Proposition 4.5 in \cite{key-1} 
the following Theorem.
\begin{thm}
If $\left(X_{n}\right)_{n\geq1}$ is an $\varepsilon$-quasi SUR sequential
design for $\mathcal{H}^{EMV}$, then it holds
\[
H_{n}^{EMV}:=\mathcal{H}^{EMV}\left(P_{n}^{\xi}\right)\xrightarrow[n\rightarrow\infty]{a.s.}0.
\]
Furthermore, we have almost surely and in
$L^{1}$ that,
\[
\mathbb{E}\left[\alpha\left(\xi\right)|\c F_{n}\right]\xrightarrow[n\rightarrow\infty]{}\alpha\left(\xi\right).
\]
\end{thm}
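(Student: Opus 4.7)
The plan is to obtain the first convergence directly from Proposition \ref{prop:7}, and then to deduce the second one by combining it with standard bounded-martingale convergence applied to $\alpha(\xi)$ itself, exploiting the identity $H_n^{EMV}=\operatorname{Var}(\alpha(\xi)\mid\mathcal{F}_n)$.

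For the first assertion I would just verify the two hypotheses of Proposition \ref{prop:7} by invoking the lemmas already established in this subsection. Lemma \ref{lem:7} gives the supermartingale property and $\mathfrak{P}$-uniform integrability of $\mathcal{H}^{EMV}$; Lemma \ref{lem:8} gives $\mathfrak{P}$-continuity; and Lemma \ref{lem:9} gives the equality $\{\nu\in\mathbb{M}:\mathcal{H}^{EMV}(\nu)=0\}=\{\nu\in\mathbb{M}:\mathcal{G}^{EMV}(\nu)=0\}$. Proposition \ref{prop:7} then immediately yields $H_n^{EMV}\xrightarrow[n\to\infty]{a.s.}0$.

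For the second assertion I would argue as follows. Since $\mu$ is finite, $\alpha(\xi)=\mu(\Gamma(\xi))$ takes values in $[0,\mu(\mathbb{X})]$ and is in particular bounded and square-integrable. Setting $M_n:=\mathbb{E}[\alpha(\xi)\mid\mathcal{F}_n]$, Doob's theorem for bounded martingales provides $M_n\to M_\infty:=\mathbb{E}[\alpha(\xi)\mid\mathcal{F}_\infty]$ almost surely and in $L^p$ for every $p<\infty$. Applying the same theorem to the bounded martingale $\mathbb{E}[\alpha(\xi)^2\mid\mathcal{F}_n]$ gives $\mathbb{E}[\alpha(\xi)^2\mid\mathcal{F}_n]\to\mathbb{E}[\alpha(\xi)^2\mid\mathcal{F}_\infty]$ almost surely. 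Writing $H_n^{EMV}=\mathbb{E}[\alpha(\xi)^2\mid\mathcal{F}_n]-M_n^2$ and letting $n\to\infty$ using the first assertion, one concludes that $\mathbb{E}[\alpha(\xi)^2\mid\mathcal{F}_\infty]=M_\infty^2$ almost surely, i.e.\ $\operatorname{Var}(\alpha(\xi)\mid\mathcal{F}_\infty)=0$. This forces $\alpha(\xi)=M_\infty$ almost surely, hence $M_n\to\alpha(\xi)$ almost surely, and the $L^1$ convergence follows from dominated convergence since $|M_n-\alpha(\xi)|\leq 2\mu(\mathbb{X})$.

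I do not expect a serious obstacle here, as the heavy technical content has been absorbed into Lemmas \ref{lem:7}--\ref{lem:9} and Proposition \ref{prop:7}; the final theorem is essentially a consequence of these together with Doob's convergence theorem. The only point that deserves some care is the passage from $\operatorname{Var}(\alpha(\xi)\mid\mathcal{F}_n)\to 0$ to the identification $\mathbb{E}[\alpha(\xi)^2\mid\mathcal{F}_\infty]=M_\infty^2$, which relies crucially on the boundedness of $\alpha(\xi)$ and hence on the standing assumption that $\mu$ is a finite measure on $\mathbb{X}$.
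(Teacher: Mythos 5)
Your proposal is correct and follows essentially the same route as the paper: the first assertion is obtained by feeding Lemmas \ref{lem:7}, \ref{lem:8} and \ref{lem:9} into Proposition \ref{prop:7}, and the second by the closed-martingale convergence of $\mathbb{E}\left[\alpha\left(\xi\right)|\mathcal{F}_{n}\right]$ together with $\text{Var}\left(\alpha\left(\xi\right)|\mathcal{F}_{\infty}\right)=0$, which is exactly the argument the paper invokes by reference to Proposition 4.5 of \cite{key-1}. Your explicit handling of $\mathbb{E}\left[\alpha\left(\xi\right)^{2}|\mathcal{F}_{n}\right]$ and the identification $\alpha\left(\xi\right)\overset{a.s.}{=}\mathbb{E}\left[\alpha\left(\xi\right)|\mathcal{F}_{\infty}\right]$ via $\mathbb{E}\left[\left(\alpha\left(\xi\right)-\mathbb{E}\left[\alpha\left(\xi\right)|\mathcal{F}_{\infty}\right]\right)^{2}\right]=0$ is sound.
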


\section{Conclusion}

We have successfully extended the consistency results for the SUR
sequential design strategies based on the integrated Bernoulli variance
functional (IBV) and the variance of the excursion volume functional
(EMV), that address the estimation of the excursion set problem, as
introduced and proven for the univariate setting in \cite{key-1}
to the multivariate setting based on multivariate Gaussian processes
$\xi=\left(\xi_{1},...,\xi_{d}\right)$ with sample paths in the function
space $\c C\left(\b X;\mathbb{R}^{d}\right)$. 

Bect et al. have furthermore proven consistency for the knowledge
gradient functional and the expected improvement functional. However,
multi-objective optimization with multivariate Gaussian processes
is beyond the scope of this paper and not considered. Nevertheless
can our results, i.e. from Section \ref{sec:2}, be of interest for
further research in this area. 

The multivariate setting for excursion set estimation also arises
in \cite{key-3} and our work provides a (slightly relaxed) theoretical
foundation for the techniques that are used in the paper for river
plume mapping. Note that the excursion sets that we are addressing
have the special form 
\[
\Gamma\left(\xi\right)=\left\{ u\in\mathbb{X}:\xi\left(u\right)\geq T\right\} ,
\]
due to the orthants ${\bf T}:=\left[\t_{1},\infty\right)\times...\times\left[\t_{d},\infty\right)$
that we are considering. It remains to be checked if the general case
of arbitrary closed sets ${\bf T}\subseteq\mathbb{R}^{d}$ also holds.

Further studies should also include the convergence rate of the SUR
sequential design to provide a full theoretical support for their
effectiveness. An important question in this context is also if the
correlation (similarity) of the Gaussian processes $\left(\xi_{1},...,\xi_{d}\right)$
has an enhancing effect on the convergence rate.

\appendix
\section{Proofs and Auxiliary Results}

\subsection{Proofs of Section 2 and Auxiliary Results}

\begin{lem}
\label{lem:10}Let $\left(\b X,\mathfrak{d}\right)$ be a compact metric space.
The product space $\mathcal{C}\left(\mathbb{X}\right)\times...\times\mathcal{C}\left(\mathbb{X}\right)$
and $\mathcal{C}\left(\mathbb{X};\mathbb{R}^{d}\right)$ with the
supremum norm 
\[
\left\Vert f\right\Vert _{\infty}=\sup_{x\in\mathbb{X}}\left\Vert f\left(x\right)\right\Vert _{\max}
\]
 are isomorphic and have the same topological structure. They even
form an isometric isomorphism if we consider the product space with
the norm 
\[
\left\Vert \left(f_{1},...,f_{d}\right)\right\Vert _{d,\infty}=\max_{i\in\left\{ 1,...,d\right\} }\left\Vert f_{i}\right\Vert _{\infty}.
\]
\end{lem}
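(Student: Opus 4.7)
The plan is to define the natural candidate map
\[
\Phi: \mathcal{C}(\mathbb{X})\times\cdots\times\mathcal{C}(\mathbb{X}) \to \mathcal{C}(\mathbb{X};\mathbb{R}^{d}), \qquad \Phi(f_1,\ldots,f_d)(x) := (f_1(x),\ldots,f_d(x)),
\]
and to verify that it is a linear bijection that preserves the norms stated in the lemma. The whole argument is essentially an unpacking of definitions, so I expect no genuine obstacle; the only point to be careful about is distinguishing the statement about topologies (which is insensitive to the choice of equivalent norms) from the statement about isometries (which requires the specific norms given).

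First I would check well-definedness. A map $f:\mathbb{X}\to\mathbb{R}^{d}$ is continuous with respect to any norm on $\mathbb{R}^{d}$ (they are all equivalent, so this is independent of the choice of $|\cdot|_{\max}$) if and only if each of its components $\pi_i\circ f$ is continuous, where $\pi_i:\mathbb{R}^{d}\to\mathbb{R}$ is the $i$-th coordinate projection. This yields both that $\Phi$ maps into the target space and that the two-sided inverse $\Phi^{-1}(f)=(\pi_1\circ f,\ldots,\pi_d\circ f)$ is well-defined on all of $\mathcal{C}(\mathbb{X};\mathbb{R}^{d})$. Linearity of $\Phi$ and $\Phi^{-1}$ is immediate from the pointwise definition, so $\Phi$ is a linear bijection.

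For the isometry, I would compute directly: for any $(f_1,\ldots,f_d)$,
\[
\|\Phi(f_1,\ldots,f_d)\|_{\infty} \;=\; \sup_{x\in\mathbb{X}} \max_{1\le i\le d} |f_i(x)| \;=\; \max_{1\le i\le d} \sup_{x\in\mathbb{X}} |f_i(x)| \;=\; \max_{1\le i\le d} \|f_i\|_{\infty} \;=\; \|(f_1,\ldots,f_d)\|_{d,\infty},
\]
where the exchange of $\sup$ and $\max$ is legitimate because the maximum runs over the finite index set $\{1,\ldots,d\}$. Hence $\Phi$ is an isometric isomorphism for the stated pair of norms. Since any other reasonable choice of norm on $\mathbb{R}^{d}$ and of product norm on $\mathcal{C}(\mathbb{X})^{d}$ is equivalent to $|\cdot|_{\max}$ and to $\|\cdot\|_{d,\infty}$ respectively, $\Phi$ remains a homeomorphism in full generality, which gives the topological part of the statement.
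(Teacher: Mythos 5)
Your proposal is correct and follows essentially the same route as the paper: define the natural componentwise map, verify it is a well-defined linear bijection (the paper phrases the well-definedness via the universal property of the product topology, which is the same fact you invoke about componentwise continuity), and obtain the isometry by exchanging the finite maximum with the supremum, with the topological claim following from norm equivalence.
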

\begin{proof}
Define the mapping $L:\mathcal{C}\left(\mathbb{X}\right)\times...\times\mathcal{C}\left(\mathbb{X}\right)\rightarrow\mathcal{C}\left(\mathbb{X};\mathbb{R}^{d}\right)$
for $x\in\b X$ by
\[
L\left(\left(f_{1},...,f_{d}\right)\right)\left(x\right)=\left(f_{1}\left(x\right),...,f_{d}\left(x\right)\right)\in\mathbb{R}^{d}.
\]
That $L$ is well-defined follows by the universal property of the
product space $\b R^{d}\cong\b R\times...\times\b R$. Indeed, the
universal property states that the product topology on $\b R^{d}$
provides for continuous function $f_{i}:\b X\rightarrow\b R$, $i\in\left\{ 1,...,d\right\} $,
the existence of a unique continuous function $f:\b X\rightarrow\b R^{d}$
such that $f_{i}=\pi_{i}\circ f$.

It is easy to see that this map is injective since $L\left(\left(f_{1},...,f_{d}\right)\right)=L\left(\left(g_{1},...,g_{d}\right)\right)$
implies $f_{i}\left(x\right)=g_{i}\left(x\right)$ for all $x\in\b X$
and $i\in\left\{ 1,...,d\right\} $. By continuity we conclude $f_{i}=g_{i}$
for all $i\in\left\{ 1,...,d\right\} $. For surjectivity let $f\in\mathcal{C}\left(\mathbb{X};\mathbb{R}^{d}\right)$,
then we have $f=L\left(\left(\pi_{1}\left(f\right),...,\pi_{d}\left(f\right)\right)\right)$,
where $\pi_{i}\left(f\right)\left(x\right)=f\left(x\right)_{i}$ is
the continuous projection map on the i-th component of $f$. Hence
the spaces $\mathcal{C}\left(\mathbb{X}\right)\times...\times\mathcal{C}\left(\mathbb{X}\right)$
and $\mathcal{C}\left(\mathbb{X};\mathbb{R}^{d}\right)$ are isomorphic. 

We furthermore have 
\begin{align*}
\left\Vert \left(f_{1},...,f_{d}\right)\right\Vert _{d,\infty} & =\max_{i\in\left\{ 1,...,d\right\} }\left\Vert f_{i}\right\Vert _{\infty}\\
 & =\max_{i\in\left\{ 1,...,d\right\} }\left(\sup_{x\in\b X}\left|f_{i}\left(x\right)\right|\right)\\
 & =\sup_{x\in\mathbb{X}}\left(\max_{i\in\left\{ 1,...,d\right\} }\left|f_{i}\left(x\right)\right|\right)\\
 & =\sup_{x\in\mathbb{X}}\left\Vert L\left(f_{1},...,f_{d}\right)\left(x\right)\right\Vert _{\max}\\
 & =\left\Vert L\left(f_{1},...,f_{d}\right)\right\Vert _{\infty},
\end{align*}
which gives us the isometry. Since equivalent norms induce the same
topology the claim follows.
\end{proof}
The continuous dual space of a real normed vector space $\left(X,\left\Vert \cdot\right\Vert \right)$
is defined by 
\[
X^{*}:=\left\{ L:X\rightarrow\mathbb{R}:L\text{ is linear and continuous}\right\} .
\]
Recall that the operator norm $\left\Vert \cdot\right\Vert _{op}$
on the dual space $X^{*}$ is given by 
\[
\left\Vert L\right\Vert _{op}:=\inf\left\{ c\geq0:\left|Lx\right|\leq c\left\Vert x\right\Vert \text{ for all }x\in X\right\} =\sup_{\left\Vert x\right\Vert \leq1}\left|Lx\right|
\]
and that $L$ is bounded if and only if it is continuous. 
\begin{thm}
\label{thm:6}$\left(X_{1},\left\Vert \cdot\right\Vert _{X_{1}}\right),...,\left(X_{d},\left\Vert \cdot\right\Vert _{X_{d}}\right)$
be real Banach spaces with dual spaces $X_{1}^{*},...,X_{d}^{*}$.
Define the space 
\[
X_{1}\times...\times X_{d}:=\left\{ \left(x_{1},...,x_{d}\right):x_{i}\in X_{i},i\in\left\{ 1,...,d\right\} \right\} 
\]
 with norm $\left\Vert \left(x_{1},...,x_{d}\right)\right\Vert =\max_{i\in\left\{ 1,...,d\right\} }\left\Vert x_{i}\right\Vert _{X_{i}}$
and 
\[
X_{1}^{*}\times...\times X_{d}^{*}:=\left\{ \left(L_{1},...,L_{d}\right):L_{i}\in X_{i}^{*},i\in\left\{ 1,...,d\right\} \right\} 
\]
 with norm $\left\Vert \left(L_{1},...,L_{d}\right)\right\Vert _{*}=\sum_{i=1}^{d}\left\Vert L_{i}\right\Vert _{op,X_{i}}$.
Then the following statements hold.
\begin{enumerate}
\item $\left(X_{1}\times...\times X_{d},\left\Vert \cdot\right\Vert \right)$
and $\left(X_{1}^{*}\times...\times X_{d}^{*},\left\Vert \cdot\right\Vert _{*}\right)$
are Banach spaces.
\item $J:X_{1}^{*}\times...\times X_{d}^{*}\rightarrow\left(X_{1}\times...\times X_{d}\right)^{*}$
defined by 
\[
J\left(L_{1},...,L_{d}\right)\left(x_{1},...,x_{d}\right)=\sum_{i=1}^{d}L_{i}x_{i}
\]
is an isometric isomorphism.
\end{enumerate}
\end{thm}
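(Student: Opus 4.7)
The plan is to handle the two parts separately: part 1 is a routine completeness verification, while part 2 is a short duality argument whose only subtle step is checking that $J$ is isometric.

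For part 1, my approach is to show that both product spaces inherit completeness coordinatewise. For $X_{1}\times\cdots\times X_{d}$ with the max norm, the inequality $\|x_i^{(n)} - x_i^{(m)}\|_{X_i} \leq \|x^{(n)} - x^{(m)}\|$ shows that any Cauchy sequence in the product gives rise to Cauchy sequences in each $X_i$; completeness of each $X_i$ then yields limits $x_i$, and the tuple $(x_1,\ldots,x_d)$ is the limit in the max norm. The same strategy applies to $X_{1}^{*}\times\cdots\times X_{d}^{*}$ with the sum-of-operator-norms $\|\cdot\|_*$, once I recall that each $X_i^*$ is a Banach space and that the sum and max norms on $\mathbb{R}^d$ are equivalent, so being Cauchy in $\|\cdot\|_*$ is equivalent to being Cauchy componentwise.

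For part 2, my plan has four short steps. First, $J$ and each $J(L_1,\ldots,L_d)$ are clearly linear, and the chain $|J(L_1,\ldots,L_d)(x_1,\ldots,x_d)| \leq \sum_i \|L_i\|_{op,X_i}\|x_i\|_{X_i} \leq \|(L_1,\ldots,L_d)\|_*\cdot\max_i \|x_i\|_{X_i}$ yields both boundedness and the inequality $\|J(L)\|_{op}\leq \|L\|_*$. Second, for the reverse inequality I would pick, for each $i$ and each $\varepsilon>0$, an element $x_i \in X_i$ with $\|x_i\|_{X_i}\leq 1$ and $L_i(x_i)\geq \|L_i\|_{op,X_i}-\varepsilon/d$, replacing $x_i$ by $-x_i$ if needed to make $L_i(x_i)$ non-negative; the tuple $x=(x_1,\ldots,x_d)$ has product norm at most $1$ and satisfies $J(L)(x)\geq \|L\|_* - \varepsilon$, giving the isometry upon sending $\varepsilon\downarrow 0$. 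An isometry is automatically injective. Third, for surjectivity, given $L\in (X_1\times\cdots\times X_d)^*$, I would define $L_i(x_i) := L(0,\ldots,0,x_i,0,\ldots,0)$ with $x_i$ in the $i$-th slot; linearity is obvious, boundedness follows from $|L_i(x_i)|\leq \|L\|_{op}\|x_i\|_{X_i}$, and decomposing any tuple as the sum of its coordinate embeddings gives $L = J(L_1,\ldots,L_d)$.

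The only real obstacle is the lower bound $\|J(L)\|_{op}\geq \|L\|_*$. The crucial observation is that putting the \emph{max} norm on the product makes it cheap to use near-maximizers of all the $L_i$ simultaneously: the tuple built from unit-norm test vectors for each factor is itself of unit norm, so the contributions $L_i(x_i)$ aggregate without any penalty into the single evaluation $J(L)(x)$. This is exactly the $\ell^\infty$/$\ell^1$ duality underlying the choice of norms on the primal and dual product, and the only care required is to fix signs so that the $L_i(x_i)$ do not cancel, which is immediate because the $L_i$ are real-valued.
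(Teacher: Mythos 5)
Your proposal is correct and follows essentially the same route as the paper: coordinatewise completeness for part 1, the upper bound $\|J(L)\|_{op}\leq\|L\|_*$ via the triangle inequality, the lower bound by testing against a unit-norm tuple of (sign-adjusted) near-maximizers for each $L_i$, and surjectivity via $L_i = L\circ i_i$ with the coordinate inclusions. Your $\varepsilon/d$ argument merely makes explicit the step the paper compresses into its displayed inequality $\sup_{\|(x_1,\ldots,x_d)\|=1}|\sum_i L_i x_i|\geq\sum_i\sup_{\|x_i\|=1}L_i x_i$.
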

\begin{proof}
Since we are considering finite products, $X_{1}\times...\times X_{d}$
is again a Banach space with the norm $\left\Vert \left(x_{1},...,x_{d}\right)\right\Vert =\max_{i\in\left\{ 1,...,d\right\} }\left\Vert x_{i}\right\Vert _{X_{i}}$
that induces the product topology. That $X_{1}^{*}\times...\times X_{d}^{*}$
is a Banach space follows by the same statement, since the (continuous)
dual space $X_{i}^{*}$ of the Banach space $X_{i}$ is again a Banach
space with the operator norm $\left\Vert \cdot\right\Vert _{op,X_{i}}$.
$X_{1}^{*}\times...\times X_{d}^{*}$ is indeed a product space, since
the product topology is also induced by the norm $\left\Vert \left(L_{1},...,L_{d}\right)\right\Vert _{*}=\sum_{i=1}^{d}\left\Vert L_{i}\right\Vert _{op,X_{i}}$.

The function $J$ is clearly linear. We continue by showing that it
is isometric with $\left\Vert J\left(L_{1},...,L_{d}\right)\right\Vert _{op}=\left\Vert \left(L_{1},...,L_{d}\right)\right\Vert _{*}$.
Indeed, we have 
\begin{align*}
\left\Vert J\left(L_{1},...,L_{d}\right)\right\Vert _{op} & =\sup_{\left\Vert \left(x_{1,...,}x_{d}\right)\right\Vert =1}\left|J\left(L_{1},...,L_{d}\right)\left(x_{1},...,x_{d}\right)\right|\\
 & =\sup_{\left\Vert \left(x_{1,...,}x_{d}\right)\right\Vert =1}\left|\sum_{i=1}^{d}L_{i}x_{i}\right|\\
 & \leq\sup_{\left\Vert \left(x_{1,...,}x_{d}\right)\right\Vert =1}\sum_{i=1}^{d}\left\Vert L_{i}\right\Vert _{op,X_{i}}\left\Vert x_{i}\right\Vert _{X_{i}}\\
 & =\sum_{i=1}^{d}\left\Vert L_{i}\right\Vert _{op,X_{i}}\\
 & =\left\Vert \left(L_{1},...,L_{d}\right)\right\Vert _{*}.
\end{align*}
On the other hand it is easy to see that
\begin{align*}
\left\Vert J\left(L_{1},...,L_{d}\right)\right\Vert _{op} & =\sup_{\left\Vert \left(x_{1,...,}x_{d}\right)\right\Vert =1}\left|\sum_{i=1}^{d}L_{i}x_{i}\right|\\
 & \geq\sum_{i=1}^{d}\sup_{\left\Vert x_{i}\right\Vert =1}L_{i}x_{i}\\
 & =\sum_{i=1}^{d}\left\Vert L_{i}\right\Vert _{op,X_{i}},
\end{align*}
where the last equality follows by 
\[
\left\Vert L_{i}\right\Vert _{op,X_{i}}=\sup_{\left\Vert x_{i}\right\Vert =1}\left|L_{i}x_{i}\right|=\sup_{\left\Vert x_{i}\right\Vert =1}L_{i}x_{i},
\]
since $X_{i}$ is a real vector space and $L_{i}:X_{i}\rightarrow\b R$
is linear. Hence we conclude $\left\Vert J\left(L_{1},...,L_{d}\right)\right\Vert _{op}=\left\Vert \left(L_{1},...,L_{d}\right)\right\Vert _{*}$.

Since $J$ is linear and isometric we already know that it is injective.
It remains to show that it is also surjective. Note that we can write
for every element $\left(x_{1},...,x_{d}\right)\in X_{1}\times...\times X_{d}$
\begin{align*}
\left(x_{1},...,x_{d}\right) & =\left(x_{1},0,...,0\right)+...+\left(0,...,x_{d}\right)\\
 & =\sum_{k=1}^{d}i_{k}\left(x_{k}\right),
\end{align*}
where $i_{k}:X_{k}\rightarrow X_{1}\times...\times X_{d}$, $i_{k}\left(x_{k}\right)=\left(0,...,0,x_{k},0,...,0\right)$
are the continuous inclusion maps for $k\in\left\{ 1,...,d\right\} $.
This means for every $L\in\left(X_{1}\times...\times X_{d}\right)^{*}$
we have $L\circ i_{k}\in X_{k}^{*}$ and we can define $\left(L\circ i_{1},...,L\circ i_{d}\right)\in X_{1}^{*}\times...\times X_{d}^{*}$.
By linearity of $L$ it follows 
\begin{align*}
J\left(L\circ i_{1},...,L\circ i_{d}\right)\left(x_{1},...,x_{d}\right) & =\sum_{k=1}^{d}\left(L\circ i_{k}\right)\left(x_{k}\right)\\
 & =L\left(\sum_{k=1}^{d}i_{k}\left(x_{k}\right)\right)\\
 & =L\left(x_{1},...,x_{d}\right)
\end{align*}
for every $\left(x_{1},...,x_{d}\right)\in X_{1}\times...\times X_{d}$
and hence the surjectivity of $J$.
\end{proof}
\begin{proof}
\textbf{(Lemma \ref{thm:1})}\\
We first show that $\xi$ is a stochastic process with continuous
sample paths if and only if it is a random element in $\left(\mathcal{C}\left(\mathbb{X};\mathbb{R}^{d}\right),\left\Vert \cdot\right\Vert _{\infty}\right)$
with respect to its Borel $\sigma$-algebra. Assume that $\xi$ is
a stochastic process with continuous sample paths. $\xi$ is measurable
with respect to the $\sigma$-algebra $\mathcal{C}\left(\mathbb{X};\mathbb{R}^{d}\right)\cap\mathcal{B}\left(\mathbb{R}^{d}\right)^{\b X}$
by Lemma 4.1 in \cite{key-9}. Since $\mathcal{B}\left(\mathbb{R}^{d}\right)^{\b X}$
is generated by the projection (or evaluation) maps $\pi_{x}:\left(\mathbb{R}^{d}\right)^{\b X}\rightarrow\b R^{d}$
for $x\in\mathbb{X}$, we have that $\xi$ is also measurable with
respect to the Borel $\sigma$-algebra on $\mathcal{C}\left(\mathbb{X};\mathbb{R}^{d}\right)$.
This makes $\xi$ a random element in $\left(\mathcal{C}\left(\mathbb{X};\mathbb{R}^{d}\right),\left\Vert \cdot\right\Vert _{\infty}\right)$.
The other direction is clear since the evaluation maps are linear
and continuous.

Assume that $\xi$ is a multivariate Gaussian processes. By the above
part $\xi$ is a random element in $\mathcal{C}\left(\mathbb{X};\mathbb{R}^{d}\right)$
and it is Gaussian if $\left\langle \xi,L\right\rangle $ is a Gaussian
variable for all $L\in\mathcal{C}\left(\mathbb{X};\mathbb{R}^{d}\right)^{*}$.
By Proposition \ref{prop:2} there exist finite signed Borel measures
$\mu_{i}$ on $\left(\b X,d\right)$ such that
\[
\left\langle \xi,L\right\rangle =\sum_{i=1}^{d}\int_{\b X}\xi_{i}\left(x\right)\mu_{i}\left(dx\right).
\]
Let $D$ be a countable dense subset of $\mathbb{X}$. Since Borel
measures and Baire measures are equivalent on compact metric spaces
(see Chapter 7 and 8 in \cite{key-19}) and every finite measure $\mu_{i}$
is also a Radon measure since it is regular, there exists a sequence
of linear combinations of Dirac measures $\delta_{x_{k}^{\left(i\right)}}$
with $x_{k}^{\left(i\right)}\in D$ such that 
\[
\sum_{k=1}^{n}a_{k}^{\left(i\right)}\delta_{x_{k}^{\left(i\right)}}\xrightarrow[n\rightarrow\infty]{w}\mu_{i},
\]
where $a_{k}^{\left(i\right)}\in\mathbb{R}$, by Example 8.1.6 in
\cite{key-19} (see also Example 8.16 in \cite{key-37} or Chapter
15 in \cite{key-32}) for every $i\in\left\{ 1,...,d\right\} $. By
the definition of multivariate Gaussian processes we know that 
\[
\sum_{i=1}^{d}\sum_{k=1}^{n}a_{k}^{\left(i\right)}\xi_{i}\left(x_{k}^{\left(i\right)}\right)
\]
is a Gaussian variable for every $n\in\b N$ and 
\begin{align*}
    \sum_{i=1}^{d}\sum_{k=1}^{n}a_{k}^{\left(i\right)}\xi_{i}\left(x_{k}^{\left(i\right)}\right) =&\sum_{i=1}^{d}\int_{\b X}\xi_{i}\left(x\right)\left(\sum_{k=1}^{n}a_{k}^{\left(i\right)}\delta_{x_{k}^{\left(i\right)}}\right)\left(dx\right) \\ \xrightarrow[n\rightarrow\infty]{a.s.}& \sum_{i=1}^{d}\int_{\b X}\xi_{i}\left(x\right)\mu_{i}\left(dx\right).
\end{align*}
Since the almost sure limit of a sequence of Gaussian random variables
is again Gaussian, we conclude that $\left\langle \xi,L\right\rangle $
is Gaussian. (The moments are bounded since the sequence converges
also in distribution and hence the sequence is tight.)

Assume now that $\xi$ is a Gaussian random element. We know that
$\xi$ induces a Gaussian vector 
\[
\left(L_{1}\left(\xi\right),...,L_{d}\left(\xi\right)\right)
\]
for $L_{i}\in U_{i}$, where $U_{i}$ are subsets of the dual space
of $\mathcal{C}\left(\mathbb{X};\mathbb{R}^{d}\right)$ for $i\in\left\{ 1,...,d\right\} $.
Taking the evaluation maps $\delta_{x}:\mathcal{C}\left(\mathbb{X};\mathbb{R}^{d}\right)\rightarrow\mathbb{R}^{d},f\mapsto f\left(x\right)$
and the projection maps $\pi_{i}:\mathbb{R}^{d}\rightarrow\mathbb{R},x\mapsto x_{i}$
we have that $\pi_{i}\circ\delta_{x}:\mathcal{C}\left(\mathbb{X};\mathbb{R}^{d}\right)\rightarrow\b R$
is linear and continuous for every $i\in\left\{ 1,...,d\right\} $,
$x\in\mathbb{X}$. Hence we can define a $\R^{d}$-valued process
by $\left(\pi_{1}\circ\delta_{x}\left(\xi\right),...,\pi_{d}\circ\delta_{x}\left(\xi\right)\right)_{x\in\mathbb{X}}$,
whose finite-dimensional distributions are Gaussian.

The finite-dimensional distributions of $\xi$ are uniquely determined
by $m$ and $k$ and hence by Proposition 4.2 in \cite{key-9} the
last claim follows, since the $\sigma$-algebras coincide as already
mentioned in the first part of the proof.
\end{proof}
\begin{proof}
\textbf{(Lemma \ref{lem:2})}\\
Let $\xi=\left(\xi_{1},...,\xi_{d}\right)$ be a multivariate Gaussian
process with continuous sample paths. Then we have for $x\in\b X$
and any sequence $\left(x_{n}\right)_{n\in\b N}$ in $\b X$ with
$x_{n}\rightarrow x$ (with respect to the metric $d$ on $\b X$)
that $\sum_{i=1}^{d}\left(\xi_{i}\left(x_{n}\right)-\xi_{i}\left(x\right)\right)^{2}\xrightarrow{a.s.}0.$
Hence $\xi_{i}\left(x_{n}\right)$ is a Gaussian random variable for
every $i\in\left\{ 1,...,d\right\} $ and $n\in\b N$ with $\xi_{i}\left(x_{n}\right)\xrightarrow{a.s.}\xi_{i}\left(x\right)$,
so Lemma 1 in \cite{key-35} implies $\sum_{i=1}^{d}\b E\left[\left(\xi_{i}\left(x_{n}\right)-\xi_{i}\left(x\right)\right)^{2}\right]\xrightarrow{}0.$
This already implies continuity of the mean function $m$ since 
\[
\left\Vert m\left(x_{n}\right)-m\left(x\right)\right\Vert _{2}^{2}\leq\sum_{i=1}^{d}\b E\left[\left(\xi_{i}\left(x_{n}\right)-\xi_{i}\left(x\right)\right)^{2}\right]\xrightarrow{}0.
\]
For continuity of the covariance function let also $y\in\b X$ and
$\left(y_{n}\right)_{n\in\b N}$ be any sequence in $\b X$ with $y_{n}\rightarrow y$.
Since 
\[
k\left(x_{n},y_{n}\right)=\mathbb{E}\left[\xi(x_{n})\xi(y_{n})^{\top}\right]-m(x_{n})m(y_{n})^{\top},
\]
it only remains to show convergence of the first term on the right
hand side. This can be checked component-wise using Cauchy-Schwarz
and 
\[
\mathbb{E}\left[\left(\xi_{i}(x_{n})-\xi_{i}(x)\right)^{2}\right]\xrightarrow{}0
\]
for all $i\in\left\{ 1,...,d\right\} $.
\end{proof}

\subsection{Proofs of Section 3 \label{subsec:app:sec3}}
\begin{lem}
Let $\mathcal{C}\left(\mathbb{X};\mathbb{R}^{d}\right)$ and $\mathcal{C}\left(\mathbb{X}\times\mathbb{X};\mathbb{R}^{d\times d}\right)$
be endowed with their Borel $\sigma$-algebra. Define the mappings
\begin{align*}
m_{\bullet}:\mathbb{M}\rightarrow\mathcal{C}\left(\mathbb{X};\mathbb{R}^{d}\right), & \,\nu\mapsto m_{\nu}\\
k_{\bullet}:\mathbb{M}\rightarrow\mathcal{C}\left(\mathbb{X}\times\mathbb{X};\mathbb{R}^{d\times d}\right) & ,\,\nu\mapsto k_{\nu}\\
\Psi:=\left(m_{\bullet},k_{\bullet}\right)
\end{align*}
and let $\Theta\subset\mathcal{C}\left(\mathbb{X};\mathbb{R}^{d}\right)\times\mathcal{C}\left(\mathbb{X}\times\mathbb{X};\mathbb{R}^{d\times d}\right)$
be the range of $\Psi$ with trace $\sigma$-algebra $\Sigma_{\Theta}$
. Then $\Psi:\mathbb{M}\rightarrow\Theta$ is $\mathcal{M}/\Sigma_{\Theta}$-measurable
and its inverse $\Psi^{-1}:\Theta\rightarrow\mathbb{M}$ is $\Sigma_{\Theta}/\mathcal{M}$-measurable.
\end{lem}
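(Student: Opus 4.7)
The plan is to exploit that $\Psi$ is a bijection onto $\Theta$ (by Theorem~\ref{thm:1}, the law $\mathcal{GP}_d(m,k)$ is uniquely determined by the pair $(m,k)$, and conversely each $\nu\in\mathbb{M}$ yields a unique $(m_\nu,k_\nu)\in\Theta$), so the forward and backward measurability claims are distinct statements to be verified separately. The forward direction will be a routine reduction through evaluation maps, while the inverse direction will be handled by a Dynkin class argument on cylinder sets.

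For the forward direction, recall that $\mathcal{C}(\mathbb{X};\mathbb{R}^d)$ is a separable Banach space whose Borel $\sigma$-algebra coincides with $\sigma(\{\delta_x:x\in\mathbb{X}\})$, as noted in Section~\ref{sec:2}; the same holds for $\mathcal{C}(\mathbb{X}\times\mathbb{X};\mathbb{R}^{d\times d})$ with the obvious bivariate evaluation maps. It therefore suffices to show that for every $x,y\in\mathbb{X}$ and every index pair $(i,j)$ the scalar maps $\nu\mapsto m_\nu(x)_i=\int f(x)_i\,\nu(df)$ and $\nu\mapsto k_\nu(x,y)_{ij}=\int (f(x)_i - m_\nu(x)_i)(f(y)_j - m_\nu(y)_j)\,\nu(df)$ are $\mathcal{M}$-measurable. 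These integrands are continuous (hence $\mathcal{S}$-measurable) functions of $f$, and Fernique's theorem ensures integrability uniformly over Gaussian laws. Standard monotone-class arguments then yield that $\nu\mapsto\int g\,d\nu$ is $\mathcal{M}$-measurable for every bounded $\mathcal{S}$-measurable $g$, starting from indicators $g=\mathbf{1}_A$ (where it is the very definition of $\mathcal{M}$) and extending by linearity and monotone convergence. I would close the forward direction by extending to integrands with polynomially controlled growth using truncations together with Fernique.

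For the inverse direction, the generating class of $\mathcal{M}$ is $\{\pi_A:A\in\mathcal{S}\}$, so it suffices to prove that $(m,k)\mapsto\mathcal{GP}_d(m,k)(A)$ is $\Sigma_\Theta$-measurable for each $A\in\mathcal{S}$. Let $\mathcal{D}$ denote the collection of $A\in\mathcal{S}$ for which this holds; $\mathcal{D}$ is easily seen to be a Dynkin system. The finite-dimensional cylinders $C_{x_1,\ldots,x_n,B}:=\{f\in\mathbb{S}:(f(x_1),\ldots,f(x_n))\in B\}$ with $B\in\mathcal{B}(\mathbb{R}^{dn})$ form a $\pi$-system that generates $\mathcal{S}$ (by separability of $\mathbb{S}$), so by Dynkin's lemma it is enough to check that these cylinders lie in $\mathcal{D}$. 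On such a cylinder,
\[
\mathcal{GP}_d(m,k)(C_{x_1,\ldots,x_n,B})=\mathcal{N}_{dn}\!\left(\mu(m),\Sigma(k)\right)(B),
\]
where $\mu(m):=\mathrm{vec}(m(x_1),\ldots,m(x_n))\in\mathbb{R}^{dn}$ and $\Sigma(k):=(k(x_i,x_j))_{1\le i,j\le n}\in\mathbb{R}^{dn\times dn}$ are continuous functions of $(m,k)\in\Theta$ in the supremum norms. It remains to show that $(\mu,\Sigma)\mapsto\mathcal{N}_{dn}(\mu,\Sigma)(B)$ is Borel measurable for every Borel $B$.

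The main obstacle lies precisely in this last step, because $\Sigma$ may be singular and there is no uniform density to differentiate. I would overcome it via characteristic functions: $(\mu,\Sigma)\mapsto \exp(it^\top\mu-\tfrac12 t^\top\Sigma t)$ is continuous for every $t$, so $(\mu_n,\Sigma_n)\to(\mu,\Sigma)$ implies $\mathcal{N}_{dn}(\mu_n,\Sigma_n)\Rightarrow\mathcal{N}_{dn}(\mu,\Sigma)$ weakly by Lévy's continuity theorem. The Portmanteau theorem then gives lower semicontinuity of $(\mu,\Sigma)\mapsto\mathcal{N}_{dn}(\mu,\Sigma)(U)$ for open $U$ and upper semicontinuity for closed sets, hence Borel measurability on both. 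A further Dynkin argument on $\mathcal{B}(\mathbb{R}^{dn})$ extends measurability from open sets to all Borel $B$, closing the chain and giving $\Psi^{-1}\in\Sigma_\Theta/\mathcal{M}$ as required.
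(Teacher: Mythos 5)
Your proposal is correct, and its skeleton matches the paper's: reduce the forward direction to scalar measurability of maps $\nu\mapsto\int g\,d\nu$ via a monotone class argument anchored in the definition of $\mathcal{M}$ through the evaluations $\pi_A$, and handle the inverse direction by checking $(m,k)\mapsto\mathcal{GP}_d(m,k)(A)$ on finite-dimensional cylinder sets and extending by Dynkin's $\pi$--$\lambda$ theorem. The technical devices differ in two places, both to your credit. For the forward direction the paper passes through the dual space: it identifies $\mathcal{C}(\mathbb{X};\mathbb{R}^d)$ with $\mathcal{C}(\mathbb{X})^d$, invokes the coincidence of weak and strong measurability on separable Banach spaces, and uses the Riesz--Markov representation plus Fubini to rewrite $L(m_\nu^{(i)})$ as $\int L(f)\,\nu(df)$; you instead use that the Borel $\sigma$-algebra on the target is generated by the pointwise evaluations $\delta_x$, which avoids the dual space and Fubini entirely and lets you work directly with $\nu\mapsto m_\nu(x)_i$ and $\nu\mapsto k_\nu(x,y)_{ij}$ (after expanding the latter as $\int f(x)_i f(y)_j\,\nu(df)-m_\nu(x)_i m_\nu(y)_j$, which is the cleanest way to phrase your covariance step); you are also more explicit than the paper about integrability of the unbounded integrands, via Fernique and truncation. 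For the inverse direction the paper simply asserts that measurability holds on cylinder sets, whereas you actually prove it: you reduce to Borel measurability of $(\mu,\Sigma)\mapsto\mathcal{N}_{dn}(\mu,\Sigma)(B)$, establish weak continuity of $(\mu,\Sigma)\mapsto\mathcal{N}_{dn}(\mu,\Sigma)$ by L\'evy's continuity theorem, get semicontinuity on open sets via Portmanteau, and run a second Dynkin argument over $\mathcal{B}(\mathbb{R}^{dn})$; this correctly handles the possibly singular covariance matrices where no density argument is available, and fills a step the paper leaves implicit. Both routes are valid; yours is somewhat more elementary and more self-contained on the delicate points.
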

\begin{proof}
We first concentrate on $m_{\bullet}:\mathbb{M}\rightarrow\mathcal{C}\left(\mathbb{X};\mathbb{R}^{d}\right)$: 

We can interpret $\mathcal{C}\left(\mathbb{X};\mathbb{R}^{d}\right)$
as the product space $\mathcal{C}\left(\mathbb{X}\right)\times...\times\mathcal{C}\left(\mathbb{X}\right)$
by Proposition \ref{lem:10}. Furthermore, it holds $\mathcal{B}\left(\mathcal{C}\left(\mathbb{X};\mathbb{R}^{d}\right)\right)=\mathcal{B}\left(\mathcal{C}\left(\mathbb{X}\right)\right)^{\otimes d}$,
so the mapping $m_{\bullet}=\left(m_{\bullet}^{(1)},...,m_{\bullet}^{(d)}\right)$
is $\mathcal{M}/\mathcal{B}\left(\mathcal{C}\left(\mathbb{X};\mathbb{R}^{d}\right)\right)$-measurable
if and only if $m_{\bullet}^{(i)}$ is $\mathcal{M}/\mathcal{B}\left(\mathcal{C}\left(\mathbb{X}\right)\right)$-measurable
for all $i=1,...,d$. This means it suffices to show that for all
functionals $L$ in the dual space of $\mathcal{C}\left(\mathbb{X}\right)$
the mapping 
\[
\nu\mapsto\left(L\circ m_{\bullet}^{(i)}\right)\left(\nu\right)=L\left(m_{\nu}^{(i)}\right)
\]
is $\mathcal{M}/\mathcal{B}\left(\mathbb{R}\right)$-measurable for
$i=1,...,d$, since weak and strong measurability coincides. For any
functional $L\in\mathcal{C}\left(\mathbb{X}\right)^{*}$ there exists
by the Riesz-Markov representation Theorem (compare Proposition \ref{prop:2})
a unique finite signed measure $\mu_{L}$ on $\mathbb{X}$ such that
$L\left(f\right)=\int_{\mathbb{X}}f\left(x\right)\mu_{L}\left(dx\right)$
for all $f\in\mathcal{C}\left(\mathbb{X}\right)$ and with Fubini's
Theorem we conclude 
\begin{align*}
L\left(m_{\nu}^{(i)}\right) & =\int_{\mathbb{X}}m_{\nu}\left(x\right)\mu_{L}\left(dx\right)\\
 & =\int_{\mathbb{X}}\left(\int_{\mathcal{C}\left(\mathbb{X}\right)}f\left(x\right)\nu\left(df\right)\right)\mu_{L}\left(dx\right)\\
 & =\int_{\mathcal{C}\left(\mathbb{X}\right)}\left(\int_{\mathbb{X}}f\left(x\right)\mu_{L}\left(dx\right)\right)\nu\left(df\right)\\
 & =\int_{\mathcal{C}\left(\mathbb{X}\right)}L\left(f\right)\nu\left(df\right).
\end{align*}
Hence the mapping $\nu\mapsto L\left(m_{\nu}^{(i)}\right)$ is measurable with respect to $\mathcal{M}/\mathcal{B}\left(\mathbb{R}\right)$
since $L$ is $\mathcal{B}\left(\mathcal{C}\left(\mathbb{X}\right)\right)/\mathbf{\mathcal{B}\left(\mathbb{R}\right)}$-measurable
as continuous linear functional. Indeed, for every $\mathcal{B}\left(\mathcal{C}\left(\mathbb{X}\right)\right)/\mathbf{\mathcal{B}\left(\mathbb{R}\right)}$-measurable
function $\varphi$ it is possible to show that 
\[
\nu\mapsto\int_{\mathcal{C}\left(\mathbb{X}\right)}\varphi\left(f\right)\nu\left(df\right)
\]
is $\mathcal{M}/\mathcal{B}\left(\mathbb{R}\right)$-measurable. This
follows first for every $\varphi$ of the form $\varphi=\boldsymbol{1}_{A}$
with $A\in\mathcal{B}\left(\mathcal{C}\left(\mathbb{X}\right)\right)$
since $\nu\mapsto\int_{\mathcal{C}\left(\mathbb{X}\right)}\varphi\left(f\right)\nu\left(df\right)=\pi_{A}\left(\nu\right)$
is $\mathcal{M}/\mathcal{B}\left(\mathbb{R}\right)$-measurable by
definition of $\c M$ and then extends by linearity of the integral
first to simple functions $\varphi$ and then to all $\mathcal{B}\left(\mathcal{C}\left(\mathbb{X}\right)\right)/\mathbf{\mathcal{B}\left(\mathbb{R}\right)}$-measurable
functions $\varphi$ by increasing approximation with simple functions.
This establishes that $m_{\bullet}^{(i)}$ is $\mathcal{M}/\mathcal{B}\left(\mathcal{C}\left(\mathbb{X}\right)\right)$-measurable
for all $i=1,...,d$ and hence also the measurability of $m_{\bullet}$.

A similar argument holds for $k_{\bullet}:\mathbb{M}\rightarrow\mathcal{C}\left(\mathbb{X}\times\mathbb{X};\mathbb{R}^{d\times d}\right)$.
There is an isometric isomorphism between $\mathbb{R}^{d\times d}$
and $\mathbb{R}^{d^{2}}$ given by 
\[
A=\left(a_{ij}\right)_{1\leq i,j\leq d}\mapsto\left(a_{11},...,a_{1d},a_{21},...,a_{dd}\right),
\]
so we can use the same steps as before to show that each component
is measurable with respect to $\mathcal{M}/\mathcal{B}\left(\mathcal{C}\left(\mathbb{X}\times\mathbb{X}\right)\right)$
and conclude with the same arguments.

That $\Psi:\mathbb{M}\rightarrow\Theta$ is $\mathcal{M}/\Sigma_{\Theta}$-measurable
follows easily by the measurability of $m_{\bullet}$ and $k_{\bullet}$.
Now $\Psi^{-1}$ is $\Sigma_{\Theta}/\mathcal{M}$-measurable if and
only if $\left(m,k\right)\mapsto\mathcal{GP}_{d}\left(m,k\right)\left(A\right)$
is measurable for all $A\text{\ensuremath{\in}}\mathcal{B}\left(\mathcal{C}\left(\mathbb{X};\mathbb{R}^{d}\right)\right)$.
The latter holds for all cylinder sets of the form $A=\bigcap_{k=1}^{n}\left\{ f\in\mathcal{C}\left(\mathbb{X};\mathbb{R}^{d}\right):f\left(x_{k}\right)\in\Gamma_{k}\right\} $
with $x_{k}\in\mathbb{X}$ and $\Gamma_{k}\in\mathcal{B}\left(\mathbb{R}^{d}\right)$
for $k=1,...,n$ and hence for all $A\text{\ensuremath{\in}}\mathcal{B}\left(\mathcal{C}\left(\mathbb{X};\mathbb{R}^{d}\right)\right)$
by Lemma \ref{lem:10} and Dynkin's $\pi-\lambda$ Theorem. 
\end{proof}
\begin{lem}
\label{lem:12}For all $n\geq1$, the mapping
\begin{align*}
\tilde{\kappa}_{n}:\mathbb{X}^{n}\times\mathbb{R}^{d\times n}\times\Theta & \rightarrow\Theta,\:\\
\left({\bf x}_{n},{\bf z}_{n},\left(m,k\right)\right) & \mapsto\left(m_{n}\left(\cdot;{\bf x}_{n},{\bf z}_{n}\right),k_{n}\left(\cdot;{\bf x}_{n}\right)\right)
\end{align*}
 is $\left(\mathcal{B}\left(\mathbb{X}^{n}\right)\otimes\mathcal{B}\left(\mathbb{R}^{d\times n}\right)\otimes\Sigma_{\Theta}\right)/\Sigma_{\Theta}$-measurable,
where ${\bf x}_{n}=\left(x_{1},...,x_{n}\right)\in\mathbb{X}^{n}$
and ${\bf z}_{n}=\left(z_{1},...,z_{n}\right)\in\mathbb{R}^{d\times n}$. 
\end{lem}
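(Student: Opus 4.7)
The plan is to exploit the identity
\[
\c B\left(\c C\left(\b X;\b R^{d}\right)\right)=\sigma\left(\left\{ \delta_{x}:x\in\b X\right\} \right)
\]
from Section \ref{sec:2}, together with its analogue for $\c C\left(\b X\times\b X;\Rdd\right)$, so that $\tilde{\kappa}_{n}$ is $\left(\c B\left(\b X^{n}\right)\otimes\c B\left(\b R^{d\times n}\right)\otimes\Sigma_{\Theta}\right)/\Sigma_{\Theta}$-measurable if and only if, for every fixed $x,y\in\b X$, the pointwise evaluations
\[
\left({\bf x}_{n},{\bf z}_{n},\left(m,k\right)\right)\mapsto m_{n}\left(x;{\bf x}_{n},{\bf z}_{n}\right)\in\b R^{d}
\]
and
\[
\left({\bf x}_{n},{\bf z}_{n},\left(m,k\right)\right)\mapsto k_{n}\left(x,y;{\bf x}_{n}\right)\in\Rdd
\]
are measurable. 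This reduction is legitimate because the Borel $\sigma$-algebra on a finite product of separable Banach spaces coincides with the product of their Borel $\sigma$-algebras, and the target $\Theta$ is equipped with the trace $\sigma$-algebra.

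Next, I would plug in the explicit formulas from Theorem \ref{thm:3} and decompose each of those real-valued evaluations as a composition of elementary measurable building blocks. First, the joint evaluation maps $\left(x',f\right)\mapsto f\left(x'\right)$ on $\b X\times\c C\left(\b X;\b R^{d}\right)$ and $\left(x',y',k\right)\mapsto k\left(x',y'\right)$ on $\b X^{2}\times\c C\left(\b X^{2};\Rdd\right)$ are jointly continuous when the function spaces carry the supremum norm; this uses compactness of $\b X$ together with uniform continuity of continuous functions on compact sets. Consequently the block assemblies $\left(m,{\bf x}_{n}\right)\mapsto m\left({\bf x}_{n}\right)$, $\left(k,x,{\bf x}_{n}\right)\mapsto K\left(x,{\bf x}_{n}\right)$ and $\left(k,{\bf x}_{n}\right)\mapsto\Sigma\left({\bf x}_{n}\right)$ (using additionally the continuity of $\c T$) are continuous into the corresponding Euclidean matrix spaces. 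Second, vectorization, transposition, matrix-matrix and matrix-vector multiplication, and addition are continuous. Third, the Moore--Penrose pseudo-inverse $A\mapsto A^{\dagger}$ is Borel measurable by the remark following its definition.

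Finally, a finite composition of Borel measurable maps is Borel measurable, so for each fixed $x$ and $(x,y)$ the evaluations of $m_{n}$ and $k_{n}$ are measurable in $\left({\bf x}_{n},{\bf z}_{n},\left(m,k\right)\right)$. Combined with the generation of $\c B\left(\c C\left(\b X;\b R^{d}\right)\right)$ and $\c B\left(\c C\left(\b X\times\b X;\Rdd\right)\right)$ by their respective pointwise evaluation maps, this yields the claimed measurability of $\tilde{\kappa}_{n}$ into $\Theta$. The only non-routine step is the pseudo-inverse, which is discontinuous across rank changes (as emphasized in the remark and in Proposition \ref{prop:5}); since only measurability, and not continuity, is required at this stage, this potential obstacle is fully resolved by the Borel measurability of $A\mapsto A^{\dagger}$.
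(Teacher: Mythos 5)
Your measurability argument is essentially the one the paper uses: reduce to pointwise evaluations via $\mathcal{B}\left(\mathcal{C}\left(\mathbb{X};\mathbb{R}^{d}\right)\right)=\sigma\left(\left\{ \delta_{x}:x\in\mathbb{X}\right\} \right)$ and its analogue on $\mathcal{C}\left(\mathbb{X}\times\mathbb{X};\mathbb{R}^{d\times d}\right)$, then write each evaluation of $m_{n}$ and $k_{n}$, through the explicit formulas of Theorem \ref{thm:3}, as a composition of the jointly continuous maps $\left(m,x\right)\mapsto m\left(x\right)$, $\left(k,x,y\right)\mapsto k\left(x,y\right)$ (and the resulting block assemblies $K\left(x,\mathbf{x}_{n}\right)$, $\Sigma\left(\mathbf{x}_{n}\right)$, using continuity of $\mathcal{T}$), continuous matrix operations, and the Borel measurable pseudo-inverse $A\mapsto A^{\dagger}$. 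That part is correct and matches the paper's proof.

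There is, however, a gap concerning the codomain. The lemma asserts that $\tilde{\kappa}_{n}$ maps into $\Theta$, and $\Theta$ is by definition the \emph{range} of $\Psi=\left(m_{\bullet},k_{\bullet}\right)$, i.e. the set of pairs that actually arise as mean and covariance function of a continuous $d$-variate Gaussian process; $\Sigma_{\Theta}$-measurability presupposes that the map takes values in this set. Your argument only establishes measurability into the ambient product $\mathcal{C}\left(\mathbb{X};\mathbb{R}^{d}\right)\times\mathcal{C}\left(\mathbb{X}\times\mathbb{X};\mathbb{R}^{d\times d}\right)$, and continuity of $m_{n}\left(\cdot;\mathbf{x}_{n},\mathbf{z}_{n}\right)$ and $k_{n}\left(\cdot;\mathbf{x}_{n}\right)$ does not by itself place the pair in $\Theta$: as the remark following Theorem \ref{thm:2} stresses, not every continuous mean/covariance pair admits a Gaussian process with continuous sample paths. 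The paper closes this with the observation you should add: for a deterministic design $\mathbf{x}_{n}$ and data $\mathbf{z}_{n}$, the process $\xi-m_{n}\left(\cdot;\mathbf{x}_{n},\mathbf{z}_{n}\right)$ is again a multivariate Gaussian process with continuous sample paths whose covariance function is $k_{n}\left(\cdot;\mathbf{x}_{n}\right)$ (equivalently, the conditional law of $\xi$ given such observations is a Gaussian measure on $\mathbb{S}$ with these moments), so that $\left(m_{n}\left(\cdot;\mathbf{x}_{n},\mathbf{z}_{n}\right),k_{n}\left(\cdot;\mathbf{x}_{n}\right)\right)\in\Theta$. With that one sentence added, your proof is complete and follows the same route as the paper.
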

\begin{proof}
$\xi-m_{n}\left(\cdot;{\bf x}_{n},{\bf z}_{n}\right)$ is again a
multivariate Gaussian process with continuous sample paths and has
the covariance function $k_{n}\left(\cdot;{\bf x}_{n}\right)$ for
any deterministic design ${\bf x}_{n}=\left(x_{1},...,x_{n}\right)\in\mathbb{X}^{n}$
which implies indeed $\left(m_{n}\left(\cdot;{\bf x}_{n},{\bf z}_{n}\right),k_{n}\left(\cdot;{\bf x}_{n}\right)\right)\in\Theta$.
The claim for the measurability of the function $\tilde{\kappa}_{n}$
follows from the continuity of the mappings $\left(m,x\right)\mapsto m\left(x\right)$,
$\left(k,x\right)\mapsto k\left(x,\cdot\right)$,$\left(k,x,y\right)\mapsto k\left(x,y\right)$
and the measurability of the mapping $X\mapsto X^{\dagger}$ in combination
with the explicit expressions for $m_{n}\left(\cdot;{\bf x}_{n},{\bf z}_{n}\right)$
and $k_{n}\left(\cdot;{\bf x}_{n}\right)$ from Theorem \ref{thm:3}.
\end{proof}
We are now ready to prove the statement from Proposition \ref{prop:3}.
\begin{proof}
\textbf{(Proposition \ref{prop:3})}\\
Any $\nu\in\mathbb{M}$ is the distribution of a multivariate Gaussian
process $\xi$ and uniquely determined by its mean $m$ and covariance
$k$, so we can write $\nu=P^{\xi}=\mathcal{GP}_{d}\left(m,k\right)$
and define $\kappa_{n}:\mathbb{X}^{n}\times\mathbb{R}^{d\times n}\times\mathbb{M}\rightarrow\mathbb{M}$
by 
\[
\kappa_{n}\left({\bf x}_{n},{\bf z}_{n},\nu\right)=\mathcal{GP}_{d}\left(m_{n}\left(\cdot;{\bf x}_{n},{\bf z}_{n}\right),k_{n}\left(\cdot;{\bf x}_{n}\right)\right)=:P_{n}^{\xi}
\]
with ${\bf x}_{n}=\left(x_{1},...,x_{n}\right)\in\mathbb{X}^{n}$
and $\bm{z}_{n}=\left(z_{1,}...,z_{n}\right)\in\mathbb{R}^{d\times n}$.
The measurability of $\kappa_{n}$ follows by the previous two Lemmas
and the equality 
\[
\kappa_{n}\left({\bf x}_{n},{\bf z}_{n},\nu\right)=\Psi^{-1}\left(\tilde{\kappa}_{n}\left({\bf x}_{n},{\bf z}_{n},\Psi\left(\nu\right)\right)\right).
\]
We need to show that $P_{n}^{\xi}$ is a conditional distribution
of $\xi$ given the $\sigma$-algebra $\mathcal{F}_{n}$ generated
by the sequential design $\left(X_{n}\right)_{n\geq1}$ and pointwise
observations $\left(Z_{n}\right)_{n\geq1}$. By the defining property
of the conditional expectation this holds if we can prove 
\[
\mathbb{E}\left[UP_{n}^{\xi}\left(\Gamma\right)\right]=\mathbb{E}\left[U\boldsymbol{1}_{\xi\in\Gamma}\right]
\]
for any $U=\prod_{i=1}^{n}\varphi_{i}\left(Z_{i}\right)$
with measurable $\varphi_{i}:\b R^{d}\rightarrow\b R$ and $\Gamma\in\mathcal{B}\left(\mathcal{C}\left(\mathbb{X};\mathbb{R}^{d}\right)\right)$
of the form $\Gamma=\bigcap_{j=1}^{J}\left\{ \xi\left(\bar{x_{j}}\right)\in\Gamma_{j}\right\} $
with $\bar{x_{j}}\in\mathbb{X}$ and $\Gamma_{j}\in\mathcal{B}\left(\mathbb{R}^{d}\right)$
for $j=1,...,J$, since it extends to any $\c F_{n}$-measurable $U$
(recall that $X_{n}$ is $\c F_{n-1}$-measurable and by iteration
it can be written as a measurable function of $Z_{n-1},...,Z_{1}$)
and any set in $\mathcal{B}\left(\mathcal{C}\left(\mathbb{X};\mathbb{R}^{d}\right)\right)$
by Lemma \ref{lem:10} and Dynkin's $\pi-\lambda$ Theorem. Indeed,
the above statement follows by applying the equality 
\[
\kappa_{n+m}\left({\bf x}_{n+m},{\bf z}_{n+m},\nu\right)=\kappa_{m}\left({\bf x}_{n+1:n+m},{\bf z}_{n+1:n+m},\kappa_{n}\left({\bf x}_{n},{\bf z}_{n},\nu\right)\right)
\]
recursively to $P_{n}^{\xi}=\kappa_{n}\left({\bf x}_{n},{\bf z}_{n},P^{\xi}\right)$.
\end{proof}
\begin{proof}
\textbf{(Proposition \ref{prop:4})}\\
By Proposition \ref{prop:3} we have that the conditional distribution
of $\xi$ given $\mathcal{F}_{n}$ is of the form $P_{n}^{\xi}=\mathcal{GP}_{d}\left(m_{n},k_{n}\right)$
and that $\xi$ is a Bochner-integrable random element with values
in $\mathbb{S}$. We can define a Lévy-martingale by $\left(\mathbb{E}\left[\xi|\mathcal{F}_{n}\right]\right)_{n\in\mathbb{N}}$
with respect to the filtration $\left(\mathcal{F}_{n}\right)_{n\in\mathbb{N}}$,
which is again a random element with values in $\mathbb{S}$ for every
$n\in\b N$, and by the Convergence Theorem for Lévy-martingales (see
Theorem 6.1.12 in \cite{key-2}) we have 
\[
\mathbb{E}\left[\xi|\mathcal{F}_{n}\right]\xrightarrow[n\rightarrow\infty]{}\mathbb{E}\left[\xi|\mathcal{F}_{\infty}\right]
\]
in $\mathbb{S}$ with respect to the supremum norm, $P$-almost surely
and in $L^{1}\left(\Omega,\mathcal{F},P\right)$. The limit $m_{\infty}:=\mathbb{E}\left[\xi|\mathcal{F}_{\infty}\right]$
is again a random element with values in $\mathbb{S}$ and $\mathcal{F}_{\infty}$-measurable
by definition. We clearly have $m_{n}=\mathbb{E}\left[\xi|\mathcal{F}_{n}\right]$
since both are elements in $\mathbb{S}$ with $m_{n}\left(x\right)=\mathbb{E}\left[\xi\left(x\right)|\mathcal{F}_{n}\right]=\delta_{x}\left(\mathbb{E}\left[\xi|\mathcal{F}_{n}\right]\right)$
for all $x\in\mathbb{X}$, where $\delta_{x}$ denotes the (linear
and continuous) evaluation function $\delta_{x}:\b S\rightarrow\b R^{d}$
with $\delta_{x}\left(f\right):=f\left(x\right)$. We conclude 
\[
m_{n}\xrightarrow[n\rightarrow\infty]{a.s.}m_{\infty}
\]
 uniformly on $\mathbb{X}$.

Assume now for simplicity that $\xi$ is a centered Gaussian random
element so the covariance function reduces to $k^{c}\left(x,y\right)=\mathbb{E}\left[\xi\left(x\right)\xi\left(y\right)^{\top}\right]$.
We can define a random element $\xi^{2}$ in the separable Banach
space $\mathcal{C}\left(\mathbb{X}\times\mathbb{X};\mathbb{R}^{d\times d}\right)$
with supremum norm 
\[
\left\Vert f\right\Vert _{\infty}:=\sup_{x\in\b X}\left(\max_{i,j\text{\ensuremath{\in\left\{ 1,...,d\right\} }}}\left|f\left(x\right)_{ij}\right|\right)
\]
by $\xi^{2}\left(x,y\right):=\xi\left(x\right)\xi\left(y\right)^{\top}$
for $x,y\in\b X$. Note that it holds
\begin{align*}
\sup_{x,y\in\mathbb{X}}\left\Vert \xi\left(x\right)\xi\left(y\right)^{\top}\right\Vert _{F} & =\sup_{x,y\in\mathbb{X}}\text{tr}\left(\left(\xi\left(x\right)\xi\left(y\right)^{\top}\right)^{\top}\left(\xi\left(x\right)\xi\left(y\right)^{\top}\right)\right)^{\frac{1}{2}}\\
 & =\sup_{x,y\in\mathbb{X}}\left(\xi\left(x\right)^{\top}\xi\left(x\right)\right)^{\frac{1}{2}}\text{tr}\left(\xi\left(y\right)\xi\left(y\right)^{\top}\right)^{\frac{1}{2}}\\
 & =\sup_{x,y\in\mathbb{X}}\left\Vert \xi\left(x\right)\right\Vert _{2}\left\Vert \xi\left(y\right)\right\Vert _{2}\\
 & =\left(\sup_{x\in\mathbb{X}}\left\Vert \xi\left(x\right)\right\Vert _{2}\right)^{2}.
\end{align*}
Using the equivalence of norms on finite-dimensional vector spaces
and Fernique's Theorem, this yields $\b E\left[\left\Vert \xi^{2}\right\Vert _{\infty}\right]<\infty$
and hence $\xi^{2}$ is Bochner-integrable. By the same reasoning
as above we have $\mathbb{E}\left[\xi^{2}|\mathcal{F}_{n}\right]\xrightarrow[n\rightarrow\infty]{}\mathbb{E}\left[\xi^{2}|\mathcal{F}_{\infty}\right]=:k_{\infty}^{c}$
in $\mathcal{C}\left(\mathbb{X}\times\mathbb{X};\mathbb{R}^{d\times d}\right)$
with respect to the supremum norm, $P$-almost surely and in $L^{1}\left(\Omega,\mathcal{F},P\right)$.
Hence $k_{n}^{c}=\mathbb{E}\left[\xi^{2}|\mathcal{F}_{n}\right]$,
since both are elements in $\mathcal{C}\left(\mathbb{X}\times\mathbb{X};\mathbb{R}^{d\times d}\right)$
with $k_{n}^{c}\left(x,y\right)=\mathbb{E}\left[\xi\left(x\right)\xi\left(y\right)^{\top}|\mathcal{F}_{n}\right]=\delta_{x,y}\left(\mathbb{E}\left[\xi^{2}|\mathcal{F}_{n}\right]\right)$
for all $\left(x,y\right)\in\mathbb{X}\times\mathbb{X}$ and hence
\[
k_{n}^{c}\xrightarrow[n\rightarrow\infty]{a.s.}k_{\infty}^{c}
\]
 uniformly on $\mathbb{X}\times\mathbb{X}$. The general case (non-centered)
follows in combination with the first part since 
\begin{align*}
k_{n}\left(x,y\right) & =\mathbb{E}\left[\left(\xi\left(x\right)-m\left(x\right)\right)\left(\xi\left(y\right)-m\left(y\right)\right)^{\top}|\mathcal{F}_{n}\right]\\
 & =k_{n}^{c}\left(x,y\right)-m_{n}\left(x\right)m_{n}\left(y\right)^{\top}.
\end{align*}
Hence we conclude 
\[
k_{n}\xrightarrow[n\rightarrow\infty]{a.s.}k_{\infty}
\]
 uniformly on $\mathbb{X}\times\mathbb{X}$.

Let now $Q$ denote any conditional distribution of $\xi$ given $\mathcal{F}_{\infty}$.
The $\mathcal{F}_{\infty}$-measurable random measure $Q$ is then
almost surely Gaussian (follows as in the proof of Proposition 2.9
in \cite{key-1} using the characteristic function for random vectors)
and thus taking 
\[
P_{\infty}^{\xi}\left(\omega,\cdot\right)=\begin{cases}
Q\left(\omega,\cdot\right) & ,\omega\in\Omega_{0}\\
\mathcal{GP}_{d}\left(0,0\right) & ,\omega\in\Omega\setminus\Omega_{0}
\end{cases}
\]
we have constructed a $\mathcal{F}_{\infty}$-measurable random element
in $\mathbb{M}$ such that 
\[
P_{n}^{\xi}\xrightarrow[n\rightarrow\infty]{a.s.}P_{\infty}^{\xi}.
\]
\end{proof}
\begin{proof}
\textbf{(Proposition \ref{prop:5})}\\
Let $\nu=\mathcal{GP}_{d}\left(m_{\nu},k_{\nu}\right)\in\mathbb{M}$
and let $\left(x_{i},z_{i}\right)\rightarrow\left(x,z\right)$ in
$\mathbb{X}\times\mathbb{R}^{d}$. For any $i\in\mathbb{N}\cup\left\{ \infty\right\} $
we have $\text{Cond}{}_{x_{i},z_{i}}\left(\nu\right)=\mathcal{GP}_{d}\left(m_{1}\left(\cdot;x_{i},z_{i}\right),k_{1}\left(\cdot,\cdot;x_{i}\right)\right)$
where $m_{1}$ and $k_{1}$ are given as in Theorem \ref{thm:3}.
It follows easily by uniform continuity of $m_{\nu}$ (see Lemma \ref{lem:2})
that $m_{1}\left(\cdot;x_{i},z_{i}\right)\rightarrow m_{1}\left(\cdot;x,z\right)$
uniformly on $\mathbb{X}$ and by uniform continuity of $k_{\nu}$
(Lemma \ref{lem:2}, $\mathbb{X}\times\mathbb{X}$ compact) together
with the continuity of $M\mapsto M^{\dagger}$ for matrices with the
same rank that $k_{1}\left(\cdot,\cdot;x_{i}\right)\rightarrow k_{1}\left(\cdot,\cdot;x\right)$
uniformly on $\mathbb{X}\times\mathbb{X}$.
\end{proof}

\subsection{Proofs of Section 4\label{subsec:app:sec4}}

\begin{proof}
\textbf{(Lemma \ref{lem:3})} \\
1.) Without loss of generality assume $\mathcal{H}_{0}=0$, since
it only adds a constant term. Furthermore $J_{n}\left(x\right)=\mathcal{J}_{x}\left(P_{n}^{\xi}\right)$
and hence it is equivalent to prove that the result holds for all
$P^{\xi}\in\mathbb{M}$ at $n=0$.

Assume now that $n=0$ , $x\in\mathbb{X}$ such that 
\[
\Sigma\left(x\right)=\Sigma_{0}\left(x\right)=k\left(x,x\right)+\mathcal{T}\left(x\right)\in\mathbb{R}^{d\times d}
\]
has rank $k$ for some $k\in\left\{ 0,...,d\right\} $ and let $\left(x_{i}\right)_{i\in\mathbb{N}}$
be a sequence in $C_{0,k}$ with $x_{i}\xrightarrow{i\rightarrow\infty}x$
($C_{0,k}$ is separable as subset of a separable metric space for
all $k\in\left\{ 0,...,d\right\} $). Recall that it holds
\[
J_{0}\left(x\right)=\mathcal{J}_{x}\left(P^{\xi}\right)=\mathbb{E}\left[\mathcal{H}\left(\text{Cond}{}_{x,Z(x)}\left(P^{\xi}\right)\right)\right],
\]
so if we take $\bm{\nu}_{i}:=\text{Cond}{}_{x_{i},Z(x_{i})}\left(P^{\xi}\right)$
for $i\in\mathbb{N}$ and $\bm{\nu}_{\infty}:=\text{Cond}{}_{x,Z(x)}\left(P^{\xi}\right)$
we have $\bm{\nu}_{i}\in\mathfrak{P}\left(\xi\right)$ and by Proposition
\ref{prop:5} $\bm{\nu}_{i}\xrightarrow[i\rightarrow\infty]{a.s.}\bm{\nu}_{\infty}$.
It follows 
\[
\mathcal{H}\left(\bm{\nu}_{i}\right)\xrightarrow[i\rightarrow\infty]{a.s.}\mathcal{H}\left(\bm{\nu}_{\infty}\right)
\]
by $\mathfrak{P}-$continuity of $\mathcal{H}$ and by the above equality
finally 
\[
\mathcal{J}_{x_{i}}\left(P^{\xi}\right)\xrightarrow[i\rightarrow\infty]{}\mathcal{J}_{x}\left(P^{\xi}\right)
\]
since $\left(\mathcal{H}\left(\bm{\nu}_{i}\right)\right)_{i\in\mathbb{N}}$
is uniformly integrable.

2.) Let $n\in\b N$. Assume that we have a deterministic design such
that $X_{i}=x_{i}$ and $Z_{i}=Z_{i}\left(x_{i}\right)$ for all $i\leq n$.
We will first prove that $k_{n}\left(x,x\right)$ is positive definite
for all $n\in\b N$ and $x\in\b X\setminus\left\{ x_{1},...,x_{n}\right\} $.
As already mentioned in the proof of Theorem \ref{thm:3} we know
that $\left(\xi\left(x\right)^{\top},\boldsymbol{Z}_{n}^{\top}\right)^{\top}$,
where $\boldsymbol{Z}_{n}:=\left(Z_{1}^{\top},...,Z_{n}^{\top}\right)^{\top}$,
is a Gaussian vector by definition of multivariate Gaussian processes
with covariance matrix 
\[
\left(\begin{array}{cc}
\text{Var}\left(\xi\left(x\right)\right) & \text{Cov}\left(\xi\left(x\right),\boldsymbol{Z}_{n}\right)\\
\text{Cov}\left(\boldsymbol{Z}_{n},\xi\left(x\right)\right) & \text{Var}\left(\boldsymbol{Z}_{n}\right)
\end{array}\right)=\left(\begin{array}{cc}
k\left(x,x\right) & K(x,\bm{x}_{n})\\
K(\bm{x}_{n},x) & \Sigma\left(\boldsymbol{x}_{n}\right)
\end{array}\right),
\]
using the same notation as in Theorem \ref{thm:3}. The covariance
matrix above is positive definite whenever $x\in\b X\setminus\left\{ x_{1},...,x_{n}\right\} $,
since for every $v_{0},v_{1},...,v_{n}\in\b R^{d}$, at least one
non-zero, it holds 
\[
\sum_{i,j=0}^{n}v_{i}^{\top}k\left(x_{i},x_{j}\right)v_{j}+\sum_{i=1}^{n}v_{i}^{\top}\c T\left(x_{i}\right)v_{i}>0
\]
 by positive definiteness of $k$, where we define $x_{0}:=x$. As
shown in Theorem \ref{thm:3} the covariance of $\text{\ensuremath{\xi\left(x\right)}}$
given $\boldsymbol{Z}_{n}$ is 
\[
k_{n}\left(x,x\right)=k\left(x,x\right)-K(x,\bm{x}_{n})\Sigma(\bm{x}_{n})^{-1}K(x,\bm{x}_{n})^{\top},
\]
which is exactly the Schur complement of the covariance matrix stated
above. By \cite{key-29} the Schur complement of a positive definite
matrix is again positive definite and hence we conclude that $k_{n}\left(x,x\right)$
is positive definite for all $x\in\b X\setminus\left\{ x_{1},...,x_{n}\right\} $.

For $x\in\left\{ x_{1},...,x_{n}\right\} $ we obtain again a Gaussian
vector $\left(\xi\left(x_{i}\right),\boldsymbol{Z}_{n}\right)$ with
covariance matrix 
\begin{align*}
 & \left(\begin{array}{cc}
\text{Var}\left(\xi\left(x_{i}\right)\right) & \text{Cov}\left(\xi\left(x_{i}\right),\boldsymbol{Z}_{n}\right)\\
\text{Cov}\left(\boldsymbol{Z}_{n},\xi\left(x_{i}\right)\right) & \text{Var}\left(\boldsymbol{Z}_{n}\right)
\end{array}\right)\\
= & \left(\begin{array}{cc}
k\left(x_{i},x_{i}\right) & K(x_{i},\bm{x}_{n})\\
K(\bm{x}_{n},x_{i}) & K\left(\boldsymbol{x}_{n}\right)
\end{array}\right)+\left(\begin{array}{cc}
0 & 0\\
0 & \c T\left(\boldsymbol{x}_{n}\right)
\end{array}\right).
\end{align*}
The first matrix is positive semi-definite as covariance matrix of
a Gaussian vector (one entry is double) and the second matrix contains
the positive definite matrix $\c T\left(\boldsymbol{x}_{n}\right)$.
Since $k\left(x_{i},x_{i}\right)$ is also positive definite, we conclude
\[
\left(v_{1},v_{2}\right)^{\top}\left(\begin{array}{cc}
\text{Var}\left(\xi\left(x_{i}\right)\right) & \text{Cov}\left(\xi\left(x_{i}\right),\boldsymbol{Z}_{n}\right)\\
\text{Cov}\left(\boldsymbol{Z}_{n},\xi\left(x_{i}\right)\right) & \text{Var}\left(\boldsymbol{Z}_{n}\right)
\end{array}\right)\left(v_{1},v_{2}\right)>0
\]
for all $\left(v_{1},v_{2}\right)\in\mathbb{R}^{d}\times\b R^{nd}$,
$\left(v_{1},v_{2}\right)\neq0$ and hence the positive definiteness
of the covariance matrix of $\left(\xi\left(x_{i}\right),\boldsymbol{Z}_{n}\right)$.
This means also the Schur complement $k_{n}\left(x_{i},x_{i}\right)$
is positive definite. We finally conclude that $k_{n}\left(x,x\right)$
is positive definite for all $x\in\b X$. 

If $k_{n}\left(x,x\right)$ is positive definite, then $\Sigma_{n}\left(x\right)$
has full rank since $\text{rank}\left(\Sigma_{n}\left(x\right)\right)\geq\max\left\{ \text{rank}\left(k_{n}\left(x,x\right)\right),\text{rank}\left(\c T\left(x\right)\right)\right\} =d$
and hence the sample paths of $J_{n}$ are continuous on $\b X$ by
1.). 

Since $J_{n}$ has continuous sample paths and $\b X$ is compact,
we know that $A_{n}\left(\omega\right):=\left\{ x\in\b X:J_{n}\left(x\right)\left(\omega\right)=\inf_{x\in\b X}J_{n}\left(x\right)\left(\omega\right)\right\} $
is a non-empty closed set for every $\omega\in\Omega$. Hence the
mapping $\omega\mapsto A_{n}\left(\omega\right)$ is a $\mathcal{F}_{n}$-measurable
random closed set that admits a $\mathcal{F}_{n}$-measurable selection
$X_{n+1}$, i.e. a $\b X$-valued random element such that $X_{n+1}\left(\omega\right)\in A_{n}\left(\omega\right)$
for all $\omega\in\Omega$ (see Theorem 2.13 in \cite{key-28}). 

3.) Define for $\omega\in\Omega$ and $0\leq k\leq d$ the sets 
\[
C_{n,k}^{\leq}\left(\text{\ensuremath{\omega}}\right):=\left\{ x\in\mathbb{X}:\text{rank}\left(\Sigma_{n}\left(x\right)\right)\left(\omega\right)\leq k\right\} ,
\]

\[
M_{n}\left(\omega\right):=\left\{ x\in\mathbb{X}:J_{n}\left(x\right)\left(\omega\right)\leq\inf_{y\in\mathbb{X}}J_{n}\left(y\right)\left(\omega\right)+\epsilon_{n}\right\} .
\]
For each $0\leq k\leq d$ the set $C_{n,k}^{\leq}\left(\text{\ensuremath{\omega}}\right)$
is compact as closed subsets of the compact metric space $\mathbb{X}$
by continuity of the mapping $x\mapsto\left(\Sigma_{n}\left(x\right)\right)\left(\omega\right)$
and closeness of the set $\left\{ M\in\mathbb{R}^{d\times d}:\text{rank}(M)\leq k\right\} $.
Since $J_{n}\left(\cdot\right)\left(\omega\right)$ is by definition
constant if $\Sigma_{n}\left(\cdot\right)\left(\omega\right)\equiv0$
and hence $M_{n}\left(\omega\right)=\mathbb{X}$, we assume that $\omega\in\Omega$
is such that $\Sigma_{n}\left(\cdot\right)\left(\omega\right)\not\equiv0$.
Without loss of generality we assume $\inf_{y\in\mathbb{X}}J_{n}\left(y\right)\left(\omega\right)<H_{n}\left(\omega\right)$
and check the following two cases.

Assume $C_{n,0}^{\leq}\left(\text{\ensuremath{\omega}}\right)\cap M_{n}\left(\omega\right)\neq\emptyset$
. Then it holds for $x_{*}\in C_{n,0}^{\leq}\left(\text{\ensuremath{\omega}}\right)\cap M_{n}\left(\omega\right)$
that $\Sigma_{n}\left(x_{*}\right)\left(\omega\right)=0$ and hence
$J_{n}\left(x_{*}\right)\left(\omega\right)=H_{n}\left(\omega\right)$,
which yields
\[
\inf_{y\in\mathbb{X}}J_{n}\left(y\right)\left(\omega\right)\leq J_{n}\left(x\right)\left(\omega\right)\leq\inf_{y\in\mathbb{X}}J_{n}\left(y\right)\left(\omega\right)+\epsilon_{n}
\]
for all $x\in\mathbb{X}$ and hence $M_{n}\left(\omega\right)=\mathbb{X}$.

Assume $C_{n,0}^{\leq}\left(\text{\ensuremath{\omega}}\right)\cap M_{n}\left(\omega\right)=\emptyset$.
Then there must exist an integer $1\leq k_{*}\leq d$ such that $C_{n,k}^{\leq}\left(\text{\ensuremath{\omega}}\right)\cap M_{n}\left(\omega\right)=\emptyset$
for all $0\leq k<k_{*}$ and $C_{n,k_{*}}^{\leq}\left(\text{\ensuremath{\omega}}\right)\cap M_{n}\left(\omega\right)\neq\emptyset$.
Indeed, if $\left(x_{j}\right)_{j\in\mathbb{N}}\subset\mathbb{X}$
is a sequence such that $J_{n}\left(x_{j}\right)\left(\omega\right)\rightarrow\inf_{y\in\mathbb{X}}J_{n}\left(y\right)\left(\omega\right)$
for $j\rightarrow\infty$, then we have for some $j$ large enough
that 
\[
J_{n}\left(x_{j}\right)\left(\omega\right)\leq\inf_{y\in\mathbb{X}}J_{n}\left(y\right)\left(\omega\right)+\epsilon_{n}
\]
and $J_{n}\left(x_{j}\right)\left(\omega\right)<H_{n}\left(\omega\right)$,
which implies $\text{tr}\left(\Sigma_{n}\left(x_{j}\right)\right)\left(\omega\right)>0$
and hence $x_{j}\in C_{n,k}^{\leq}\left(\text{\ensuremath{\omega}}\right)\cap M_{n}\left(\omega\right)$
for some $1\leq k\leq d$. Hence the sets $C_{n,k}^{\leq}\left(\text{\ensuremath{\omega}}\right)\cap M_{n}\left(\omega\right)$
are not empty for some $k$ large enough. Taking the smallest integer
$k_{*}\geq1$ such that $C_{n,k_{*}}^{\leq}\left(\text{\ensuremath{\omega}}\right)\cap M_{n}\left(\omega\right)\neq\emptyset$
gives the desired result. 

Choosing $k_{*}$ this way means
\[
C_{n,k_{*}}^{\leq}\left(\text{\ensuremath{\omega}}\right)\cap M_{n}\left(\omega\right)\subset C_{n,k_{*}}
\]
and since $J_{n}$ is continuous on $C_{n,k_{*}}$ by 1.), we conclude
that $C_{n,k_{*}}^{\leq}\left(\text{\ensuremath{\omega}}\right)\cap M_{n}\left(\omega\right)$
is closed and hence compact. This means the mapping $\omega\mapsto C_{n,k_{*}}^{\leq}\left(\text{\ensuremath{\omega}}\right)\cap M_{n}\left(\omega\right)$
is an $\mathcal{F}_{n}$-measurable random closed set and we can chose
an $\mathcal{F}_{n}$-measurable selection $X_{n+1}^{(k_{*})}$ that
takes values in this random closed set (see Theorem 2.13 in \cite{key-28}).
The desired $\epsilon$-quasi SUR sequential design $\left(X_{n}\right)_{n\geq1}$
can hence be defined by 
\[
X_{n+1}=\begin{cases}
x & ,M_{n}=\mathbb{X}\\
X_{n+1}^{(k_{*})} & ,\text{else}
\end{cases}
\]
for some arbitrary $x\in\mathbb{X}$.
\end{proof}
\begin{proof}
\textbf{(Proposition \ref{prop:6})} \\
Using the notation from the proof of Proposition \ref{prop:3}, we
see that 
\[
\mathcal{J}_{x}\left(\nu\right)=\int_{\mathbb{R}^{d}}\mathcal{H}\left(\kappa_{1}\left(x,m_{\nu}\left(x\right)+\Sigma_{\nu}\left(x\right)^{\frac{1}{2}}u,\nu\right)\right)\phi_{d}\left(u\right)du.
\]
Using Lemma \ref{lem:12} in the Appendix and the measurability of
$\kappa_{1}$ (see proof of Proposition \ref{prop:3}), we see that
the integrand is a $\mathcal{B}\left(\mathbb{X}\right)\otimes\mathcal{B}\left(\mathbb{R}^{d}\right)\otimes\mathcal{M}$
measurable function of $\left(x,u,\nu\right)$. The claim follows
by Fubini's Theorem.
\end{proof}

\subsection{Proofs of Section 5\label{subsec:app:sec5}}

\begin{thm}
\label{thm:7}Let $\mathcal{H}$ be an uncertainty functional on $\mathbb{M}$
that has the supermartingale property, $\mathcal{G}$ the associated
maximal expected gain functional and $\left(X_{n}\right)_{n\geq1}$
be an $\epsilon$-quasi SUR sequential design for $\mathcal{H}$.
\begin{enumerate}
\item Then it holds
\[
\mathcal{G}\left(P_{n}^{\xi}\right)\xrightarrow[n\rightarrow\infty]{a.s.}0.
\]
\item If moreover 
\begin{enumerate}
\item $H_{n}:=\mathcal{H}\left(P_{n}^{\xi}\right)\xrightarrow[n\rightarrow\infty]{a.s.}\mathcal{H}\left(P_{\infty}^{\xi}\right),$
\item $\mathcal{G}\left(P_{n}^{\xi}\right)\xrightarrow[n\rightarrow\infty]{a.s.}\mathcal{G}\left(P_{\infty}^{\xi}\right),$
\item $\mathbb{Z}_{\mathcal{H}}:=\left\{ \nu\in\mathbb{M}:\mathcal{H}\left(\nu\right)=0\right\} =\left\{ \nu\in\mathbb{M}:\mathcal{G}\left(\nu\right)=0\right\} =:\mathbb{Z}_{\mathcal{G}},$ 
\end{enumerate}
then we have 
\[
H_{n}:=\mathcal{H}\left(P_{n}^{\xi}\right)\xrightarrow[n\rightarrow\infty]{a.s.}0.
\]

\end{enumerate}
\end{thm}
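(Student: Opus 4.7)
My plan is to handle the two parts of Theorem~\ref{thm:7} in succession, both via direct supermartingale arguments. The core of the proof is a single sandwich inequality that pins $\mathcal{G}(P_n^\xi)$ between $0$ and a telescoping difference plus $\varepsilon_n$; everything else is glue.

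For Part 1, I would first rewrite the quantity of interest: by definition of the maximal expected gain,
\[
\mathcal{G}(P_n^\xi) \;=\; \sup_{x\in\mathbb{X}} \bigl(\mathcal{H}(P_n^\xi)-\mathcal{J}_x(P_n^\xi)\bigr) \;=\; H_n - \inf_{x\in\mathbb{X}} J_n(x).
\]
The $\varepsilon$-quasi SUR defining inequality $J_n(X_{n+1}) \leq \inf_{x\in\mathbb{X}} J_n(x) + \varepsilon_n$, combined with the identity $\mathbb{E}[H_{n+1}\mid\mathcal{F}_n] = J_n(X_{n+1})$ (which follows from the $\mathcal{F}_n$-measurability of $X_{n+1}$ and the very definition of the sampling criterion $J_n$), then yields
\[
0 \;\leq\; \mathcal{G}(P_n^\xi) \;\leq\; H_n - \mathbb{E}[H_{n+1}\mid\mathcal{F}_n] + \varepsilon_n.
\]
Writing $\Delta_n := H_n - \mathbb{E}[H_{n+1}\mid\mathcal{F}_n] \geq 0$, the telescoping bound $\sum_{n=0}^{N}\mathbb{E}[\Delta_n] = \mathbb{E}[H_0]-\mathbb{E}[H_{N+1}] \leq \mathbb{E}[H_0] < \infty$ together with Fubini--Tonelli gives $\sum_n \Delta_n < \infty$ almost surely, and in particular $\Delta_n \to 0$ almost surely. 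Since $\varepsilon_n \to 0$ by hypothesis, the sandwich forces $\mathcal{G}(P_n^\xi) \to 0$ almost surely, proving Part 1.

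Part 2 then follows in a few lines: by assumption (b) one has $\mathcal{G}(P_n^\xi) \to \mathcal{G}(P_\infty^\xi)$ almost surely, so uniqueness of limits together with Part 1 forces $\mathcal{G}(P_\infty^\xi) = 0$ a.s., i.e. $P_\infty^\xi \in \mathbb{Z}_{\mathcal{G}}$; hypothesis (c) then upgrades this to $P_\infty^\xi \in \mathbb{Z}_{\mathcal{H}}$, so that $\mathcal{H}(P_\infty^\xi)=0$ almost surely, and finally assumption (a) gives $H_n \to \mathcal{H}(P_\infty^\xi)=0$ almost surely.

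The only step that is not completely mechanical is justifying the lower bound $\mathcal{G}(P_n^\xi) \geq 0$: the supermartingale property only yields $\mathbb{E}[H_{n+1}\mid\mathcal{F}_n] \leq H_n$, whereas what is needed is the pointwise inequality $J_n(x) \leq H_n$ for every $x \in \mathbb{X}$. The standard workaround is to invoke the supermartingale hypothesis for an auxiliary sequential design that extends $(X_1,\dots,X_n)$ by the deterministic choice $X_{n+1}=x$; since this $X_{n+1}$ is $\mathcal{F}_n$-measurable, one obtains $J_n(x) = \mathbb{E}[H_{n+1}\mid\mathcal{F}_n] \leq H_n$ for each fixed $x \in \mathbb{X}$, whence $\mathcal{G}_x(P_n^\xi) \geq 0$ pointwise and $\mathcal{G}(P_n^\xi) \geq 0$ upon taking the supremum. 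Once this observation is in place, the entire proof is a clean combination of Doob's non-negative supermartingale argument with the continuity and zero-set hypotheses of Part 2.
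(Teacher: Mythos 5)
Your argument is correct and follows essentially the same route as the proof the paper relies on (Theorem 3.12 of \cite{key-1}, which the paper invokes verbatim for the multivariate case): the quasi-SUR inequality sandwiches $\mathcal{G}(P_n^{\xi})$ between $0$ and $H_n-\mathbb{E}[H_{n+1}\mid\mathcal{F}_n]+\varepsilon_n$, the telescoping/Doob-decomposition bound forces the increments to vanish almost surely, and Part 2 is the same chain $\mathcal{G}(P_\infty^{\xi})=0 \Rightarrow P_\infty^{\xi}\in\mathbb{Z}_{\mathcal{G}}=\mathbb{Z}_{\mathcal{H}} \Rightarrow H_n\to\mathcal{H}(P_\infty^{\xi})=0$. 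Your handling of the lower bound, deducing $\mathcal{J}_x(\nu)\leq\mathcal{H}(\nu)$ from the supermartingale property applied to the auxiliary design with deterministic next point $x$, is exactly the standard justification used there as well, so nothing essential is missing.
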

\begin{proof}
See Theorem 3.12 in \cite{key-1}. The proof works the same way for
multivariate Gaussian processes.
\end{proof}
\begin{thm}
\label{thm:8}Let $\mathcal{H}$ be an uncertainty functional on $\mathbb{M}$
and $\mathcal{G}$ the associated maximal expected gain functional.
Assume that we can decompose $\mathcal{H}=\mathcal{H}_{0}+\mathcal{H}_{1}$,
where
\begin{enumerate}
\item $\mathcal{H}_{0}\left(\nu\right)=\int_{\mathcal{\mathbb{S}}}L_{0}d\nu$
for some $L_{0}\in\bigcap_{\nu\in\mathbb{M}}\mathcal{L}^{1}\left(\mathbb{S},\mathcal{S},\nu\right)$
and 
\item $\mathcal{H}_{1}$ is $\mathfrak{P}$-continuous, $\mathfrak{P}$-uniformly
integrable and has the supermartingale property.
\end{enumerate}
Then, for any quasi-SUR sequential design associated with $\mathcal{H}$,
it holds 
\[
\mathcal{G}\left(P_{\infty}^{\xi}\right)\overset{a.s.}{=}0.
\]
\end{thm}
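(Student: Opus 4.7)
The plan is to reduce the statement to Theorem~\ref{thm:7}.1 applied to the functional $\mathcal{H}_1$ alone, and then to transfer the resulting almost-sure convergence of $\mathcal{G}_1(P_n^\xi)$ to the terminal $\sigma$-algebra via Proposition~\ref{prop:4}. The crucial observation is that the linear summand $\mathcal{H}_0$ contributes nothing to the expected gain functional, so that the design and the gain both effectively depend only on $\mathcal{H}_1$.

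First I would establish that $\mathcal{J}_{0,x}(\nu)=\mathcal{H}_0(\nu)$ for every $(x,\nu)\in\mathbb{X}\times\mathbb{M}$. Since $L_0\in\mathcal{L}^1(\mathbb{S},\mathcal{S},\nu)$, Fubini together with the tower property of conditional expectation for the pair $(\xi,Z(x))$ yields
\[
\mathcal{J}_{0,x}(\nu)=\int_{\mathbb{R}^d}\left(\int_{\mathbb{S}}L_0(f)\,\text{Cond}_{x,m_\nu(x)+\Sigma_\nu(x)^{1/2}u}(\nu)(df)\right)\phi_d(u)\,du=\int_{\mathbb{S}}L_0\,d\nu=\mathcal{H}_0(\nu),
\]
so $\mathcal{G}_{0,x}\equiv 0$, which gives $\mathcal{G}_x=\mathcal{G}_{1,x}$ for every $x$ and hence $\mathcal{G}=\mathcal{G}_1$. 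Moreover, the sampling criterion splits as $J_n(x)=\mathcal{H}_0(P_n^\xi)+\mathcal{J}_{1,x}(P_n^\xi)$ where the first term is $\mathcal{F}_n$-measurable and independent of $x$, so any $\varepsilon$-quasi SUR sequential design for $\mathcal{H}$ is automatically an $\varepsilon$-quasi SUR sequential design for $\mathcal{H}_1$.

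Since $\mathcal{H}_1$ has the supermartingale property by hypothesis, Theorem~\ref{thm:7}.1 applied to $\mathcal{H}_1$ gives $\mathcal{G}_1(P_n^\xi)\xrightarrow[n\to\infty]{a.s.}0$. Combining this with Proposition~\ref{prop:4}, which provides $P_n^\xi\to P_\infty^\xi$ a.s.\ in $\mathbb{M}$, I would conclude through a lower semi-continuity estimate
\[
\mathcal{G}(P_\infty^\xi)=\sup_{x\in\mathbb{X}}\mathcal{G}_{1,x}(P_\infty^\xi)\leq\sup_{x\in\mathbb{X}}\liminf_{n}\mathcal{G}_{1,x}(P_n^\xi)\leq\liminf_{n}\mathcal{G}_1(P_n^\xi)=0\quad\text{a.s.,}
\]
where the first inequality requires lower semi-continuity of $\mathcal{G}_{1,x}=\mathcal{H}_1-\mathcal{J}_{1,x}$ along $(P_n^\xi)$: the $\mathcal{H}_1$-piece is handled by $\mathfrak{P}$-continuity, and upper semi-continuity of $\mathcal{J}_{1,x}$ along $(P_n^\xi)$ reduces to dominated convergence inside its defining integral, via Proposition~\ref{prop:5} to get $\text{Cond}_{x,z_n}(P_n^\xi)\to\text{Cond}_{x,z}(P_\infty^\xi)$, together with $\mathfrak{P}$-continuity and $\mathfrak{P}$-uniform integrability of $\mathcal{H}_1$. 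Combined with $\mathcal{G}(P_\infty^\xi)\geq 0$ from the supermartingale property, this yields the conclusion.

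The main obstacle is precisely this last semi-continuity claim: Proposition~\ref{prop:5} guarantees convergence of $\text{Cond}_{x,z}(\nu_n)$ only when the ranks of $\Sigma_{\nu_n}(x)$ eventually stabilize, whereas in the vector-valued case these ranks can jump in the limit due to the discontinuity of the Moore--Penrose pseudo-inverse across rank changes. Handling this will require either partitioning $\mathbb{X}$ according to the eventual rank of $\Sigma_{P_n^\xi}(x)$ in the spirit of Lemma~\ref{lem:13}, or treating the rank-jump set separately by exploiting monotonicity of the supermartingale $(H_{1,n})$ rather than continuity. A separability argument, combined with continuity of $x\mapsto\mathcal{G}_{1,x}$ on rank-constant strata from Lemma~\ref{lem:3}, is also needed to replace the uncountable supremum over $\mathbb{X}$ by a countable one so that the a.s.\ inequality holds on a single full-measure event.
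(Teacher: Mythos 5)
Your reduction of the problem is sound as far as it goes: the computation $\mathcal{J}_{0,x}(\nu)=\mathcal{H}_0(\nu)$, hence $\mathcal{G}=\mathcal{G}_1$ and the fact that a quasi-SUR design for $\mathcal{H}$ is one for $\mathcal{H}_1$, and the application of Theorem~\ref{thm:7}(1) to get $\mathcal{G}(P_n^{\xi})\to 0$ a.s.\ are all correct and are also implicit in the paper's appeal to the argument of Theorem 3.16 of \cite{key-1}. The gap is in the transfer to $P_\infty^{\xi}$, which is exactly the substance of the theorem and which you leave as an acknowledged plan rather than a proof. Your chain of inequalities hinges on the semicontinuity claim $\limsup_n \mathcal{J}_{1,x}(P_n^{\xi})\le \mathcal{J}_{1,x}(P_\infty^{\xi})$, and the tool you invoke for it does not apply: Proposition~\ref{prop:5} concerns $\mathrm{Cond}_{x_i,z_i}(\nu)$ for a \emph{fixed} measure $\nu$ as the conditioning point varies within a rank stratum, not $\mathrm{Cond}_{x,z}(\nu_n)$ as the measure varies, which is what you need. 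Since $k_n$ decreases, the rank of $\Sigma_n(x)$ can genuinely drop in the limit $\Sigma_\infty(x)$, and there the pseudo-inverse (hence $\nu\mapsto\mathrm{Cond}_{x,z}(\nu)$) is discontinuous; you flag this yourself ("will require either\ldots or\ldots") without resolving it. The paper sidesteps this entirely for fixed $x$: it conditions on the \emph{random} observation $Z(x)$, so that $\mathcal{J}_{1,x}(P_n^{\xi})=\mathbb{E}_n[\mathcal{H}_1(\mathrm{Cond}_{x,Z(x)}(P_n^{\xi}))]$ with $\mathrm{Cond}_{x,Z(x)}(P_n^{\xi})\in\mathfrak{P}(\xi)$ a conditional distribution given the increasing $\sigma$-algebras $\mathcal{F}_n\vee\sigma(Z(x))$; a L\'evy-martingale argument as in Proposition~\ref{prop:4} then gives a.s.\ convergence of these random measures, and $\mathfrak{P}$-continuity plus $\mathfrak{P}$-uniform integrability of $\mathcal{H}_1$ finish the fixed-$x$ statement $\mathcal{G}_x(P_\infty^{\xi})\overset{a.s.}{=}0$ with no continuity of conditioning in $\nu$ required.

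The second unexecuted step is the passage from "for each $x$, a.s." to the a.s.\ vanishing of $\sup_{x\in\mathbb{X}}\mathcal{G}_x(P_\infty^{\xi})$ over the uncountable index set; since the per-$x$ null sets depend on $x$, your displayed inequality does not hold on a single event without further argument. This is precisely the new, vector-valued content of the paper's proof: it introduces the random sets $C_{k,\infty}=\{x:\mathrm{rank}(\Sigma_\infty(x))=k\}$ and $C_{k,\infty}^{\le}$, uses that the $C_{k,\infty}^{\le}$ are closed (hence compact and separable), that $x\mapsto\mathcal{G}_x(P_\infty^{\xi})$ is continuous on each constant-rank stratum (Lemma~\ref{lem:3}(1)), and runs an induction on $k=0,\dots,d$ with countable dense subsets of $C_{k,\infty}^{\le}$ to conclude $\sup_{x\in\mathbb{X}}\mathcal{G}_x(P_\infty^{\xi})=0$ a.s. Your proposal names both ingredients (stratification in the spirit of Lemma~\ref{lem:13}, separability plus Lemma~\ref{lem:3}) but does not carry them out, so the proof is incomplete precisely where the difficulty lies.
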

\begin{proof}
Using Theorem \ref{thm:7} and Proposition \ref{prop:4}, it is straightforward
to obtain 
\[
\mathcal{G}_{x}\left(P_{\infty}^{\xi}\right)=\mathcal{H}\left(P_{\infty}^{\xi}\right)-\mathbb{E}_{\infty}\left[\mathcal{H}\left(P_{\infty,x}^{\xi}\right)\right]\stackrel{a.s.}{=}0
\]
for each $x\in\mathbb{X}$ by following the same steps as in the proof
of Theorem 3.16 in \cite{key-1}. To conclude we need to show that
\[
\mathcal{G}\left(P_{\infty}^{\xi}\right)=\sup_{x\in\mathbb{X}}\mathcal{G}_{x}\left(P_{\infty}^{\xi}\right)\stackrel{a.s.}{=}0.
\]
Define with $\Sigma_{\infty}\left(x\right)=k_{\infty}\left(x,x\right)+\mathcal{T}\left(x\right)$
the random sets 
\[
C_{k,\infty}:=\left\{ x\in\mathbb{X}:\text{rank}\left(\Sigma_{\infty}\left(x\right)\right)=k\right\} 
\]
and 
\[
C_{k,\infty}^{\leq}:=\left\{ x\in\mathbb{X}:\text{rank}\left(\Sigma_{\infty}\left(x\right)\right)\leq k\right\} 
\]
for $k=0,...,d$. For $k\in\left\{ 0,...,d\right\} $ the sets $C_{k,\infty}^{\leq}$
are closed subsets of $\b X$ ($\Sigma_{\infty}\left(\omega\right)$
is continuous for all $\omega\in\Omega$ and $\left\{ M\in\Rdd:\text{rank}\left(M\right)\leq k\right\} $
is closed) and hence compact and separable with $C_{k,\infty}^{\leq}\subseteq C_{k+1,\infty}^{\leq}$,
$k\in\left\{ 0,...,d-1\right\} $, and $\b X=C_{d,\infty}^{\leq}$.
By the previous Lemma we know that the sample paths of $J_{\infty}$
are continuous on each set $C_{k,\infty}$ and hence $x\mapsto\mathcal{G}_{x}\left(P_{\infty}^{\xi}\right)$
has continuous sample paths on $C_{k,\infty}$. The statement follows by iteration.

Let $\left\{ x_{i}\right\} _{i\in\b N}$ be a countable dense subset
of $C_{1,\infty}^{\leq}$. In the previous part we have seen that
$\mathcal{G}_{x_{i}}\left(P_{\infty}^{\xi}\right)=0$ for all $i\in\b N$,
almost surely. Using the continuity of $x\mapsto\mathcal{G}_{x}\left(P_{\infty}^{\xi}\right)$
on $C_{1,\infty}$ and the fact that $\mathcal{G}_{x}\left(P_{\infty}^{\xi}\right)=0$
on $C_{0,\infty}$, we conclude 
\[
\sup_{x\in C_{1,\infty}^{\leq}}\mathcal{G}_{x}\left(P_{\infty}^{\xi}\right)=0
\]
almost surely. Assume now that we have shown $\mathcal{G}_{x}\left(P_{\infty}^{\xi}\right)=0$
on $C_{k,\infty}^{\leq}$ almost surely. Let $\left\{ x_{i}\right\} _{i\in\b N}$
be a countable dense subset of $C_{k+1,\infty}^{\leq}$. Then $\mathcal{G}_{x_{i}}\left(P_{\infty}^{\xi}\right)=0$
for all $i\in\b N$ almost surely and using the continuity of $x\mapsto\mathcal{G}_{x}\left(P_{\infty}^{\xi}\right)$
on $C_{k+1,\infty}$ and $\mathcal{G}_{x}\left(P_{\infty}^{\xi}\right)=0$
on $C_{k,\infty}^{\leq}$ we conclude
\[
\sup_{x\in C_{k+1,\infty}^{\leq}}\mathcal{G}_{x}\left(P_{\infty}^{\xi}\right)=0
\]
almost surely. This leads in the end for $k=d$ to 
\[
\mathcal{G}\left(P_{\infty}^{\xi}\right)=\sup_{x\in\mathbb{X}}\mathcal{G}_{x}\left(P_{\infty}^{\xi}\right)\stackrel{}{=}0
\]
almost surely, since $\b X=C_{d,\infty}^{\leq}$.
\end{proof}
\begin{proof}
\textbf{(Proposition \ref{prop:7})}\\
Take $\mathcal{H}_{0}=0$ and $\mathcal{H}_{1}=\mathcal{H}$. Then
we have for any quasi-SUR sequential design associated with $\mathcal{H}$
that $\mathcal{G}\left(P_{\infty}^{\xi}\right)\overset{a.s.}{=}0$
by Theorem \ref{thm:8}. By the first part of Theorem \ref{thm:7}
it holds $\mathcal{G}\left(P_{n}^{\xi}\right)\xrightarrow[n\rightarrow\infty]{a.s.}0$
and hence $\mathcal{G}\left(P_{n}^{\xi}\right)\xrightarrow[n\rightarrow\infty]{a.s.}\mathcal{G}\left(P_{\infty}^{\xi}\right)$.
By $\mathfrak{P}$-continuity we also have $\mathcal{H}\left(P_{n}^{\xi}\right)\xrightarrow[n\rightarrow\infty]{a.s.}\mathcal{H}\left(P_{\infty}^{\xi}\right)$
and hence all conditions for the second part of Theorem \ref{thm:7}
are satisfied. We conclude $H_{n}\xrightarrow[n\rightarrow\infty]{a.s.}0$.
\end{proof}

\subsubsection{Proofs of Section 5.1 for IBV}

\begin{proof}
\textbf{(Lemma \ref{lem:4})} \\
$\mathcal{H}^{IBV}$ is $\mathfrak{P}$-uniformly integrable, since
the uncertainty functional is upper-bounded. Indeed, we have for every
Gaussian random element $\xi$ in $\b S$ and $u\in\mathbb{X}$
\[
\text{Var}\left({\bf 1}_{\Gamma\left(\xi\right)}\left(u\right)\right)\overset{}{\leq}\frac{1}{4},
\]
since ${\bf 1}_{\Gamma\left(\xi\right)}\left(u\right)\in\left[0,1\right]$.
Hence we have for any measure $\nu\in\mathbb{M}$ taking $\xi\sim\nu$
that
\[
\left|\mathcal{H}^{IBV}\left(\nu\right)\right|=\int_{\mathbb{X}}\text{Var}\left({\bf 1}_{\Gamma\left(\xi\right)}\left(u\right)\right)\mu\left(du\right)\leq\frac{\mu\left(\b X\right)}{4},
\]
since $\mu$ is a finite measure over $\mathbb{X}$. Furthermore, $H_{n}^{IBV}$ is an $\c F_{n}$-measurable random variable
and integrable by definition of the conditional expectation. Furthermore,
we have for $p_{n}\left(u\right)=\mathbb{E}\left[{\bf 1}_{\Gamma\left(\xi\right)}\left(u\right)|\mathcal{F}_{n}\right]$
by the tower property and Jensen's inequality 
\begin{align*}
\E\left[H_{n}^{IBV}|\mathcal{F}_{n-1}\right] & =\int_{\mathbb{X}}\left(\E\left[p_{n}\left(u\right)|\c F_{n-1}\right]-\E\left[p_{n}\left(u\right)^{2}|\c F_{n-1}\right]\right)\mu\left(du\right)\\
 & \overset{a.s.}{\leq}\int_{\mathbb{X}}\left(p_{n-1}\left(u\right)-p_{n-1}\left(u\right)^{2}\right)\mu\left(du\right)\\
 & =H_{n-1}^{IBV}.
\end{align*}
\end{proof}
\begin{proof}
\textbf{(Lemma \ref{lem:13}) }\\
That $\mathbb{X}$ can be written as the disjoint union 
\[
\bigcup_{J_1,J_2\subseteq\left\{ 1,...,d\right\} }B_{J_1,J_2}\left(\omega\right)
\]
follows already by assumption 3. of the Lemma. 

Let $\xi$ be a random element in $\b S$ with $\xi\sim\c{GP}_{d}\left(m,k\right)$
and let $\bm{\nu}_{n}\in\mathfrak{P}\left(\xi\right)$ for all $n\in\mathbb{N}\cup\left\{ \infty\right\} $
such that $\bm{\nu}_{n}\xrightarrow{}\bm{\nu}_{\infty}$
almost surely. By definition of $\mathfrak{P}\left(\xi\right)$ there
exist $\sigma$-algebras $\c G_{n}$ such that $\bm{\nu}_{n}=P\left(\xi\in\cdot|\mathcal{G}_{n}\right)$
for all $n\in\mathbb{N}\cup\left\{ \infty\right\} $. 
For the first property note that for $u\in\b X$ and $j\in\left\{ 1,...,d\right\} $
we have
\begin{align*}
 & \b E\left[\I_{k_{\infty}\left(u,u\right)_{jj}=0}\left(\xi\left(u\right)_{j}-m_{\infty}\left(u\right)_{j}\right)^{2}\right]\\
= & \b E\left[\I_{k_{\infty}\left(u,u\right)_{jj}=0}\b E\left[\left(\xi\left(u\right)_{j}-m_{\infty}\left(u\right)_{j}\right)^{2}|\c G_{\infty}\right]\right]\\
= & \b E\left[\I_{k_{\infty}\left(u,u\right)_{jj}=0}k_{\infty}\left(u,u\right)_{jj}\right]\\
= & 0
\end{align*}
by the tower property and since $k_{\infty}\left(u,u\right)$ is $\c G_{\infty}$-measurable.
This means 
\[
\I_{k_{\infty}\left(u,u\right)_{jj}=0}\left(\xi\left(u\right)_{j}-m_{\infty}\left(u\right)_{j}\right)^{2}\overset{a.s.}{=}0,
\]
since it is a non-negative random variable, and hence for almost all
$\omega\in\Omega$ we have that $\sum_{j\in J_2}k_{\infty}\left(u,u\right)\left(\text{\ensuremath{\omega}}\right)_{jj}^{2}=0$
implies 
\[
\xi\left(u\right)\left(\text{\ensuremath{\omega}}\right)_{j}=m_{\infty}\left(u\right)\left(\text{\ensuremath{\omega}}\right)_{j}
\]
for all $j\in J_2$. Note that $\left(u,\omega\right)\mapsto m_{\infty}\left(u\right)\left(\omega\right)$
and $\left(u,\omega\right)\mapsto k_{\infty}\left(u,u\right)\left(\omega\right)$
are jointly measurable by continuity of the sample paths $u\mapsto m_{\infty}\left(u\right)\left(\omega\right)$
and $u\mapsto k_{\infty}\left(u,u\right)\left(\omega\right)$ for
all $\omega\in\Omega$. By Fubini-Tonelli we conclude
\begin{align*}
 & \b E\left[\mu\left(B_{J_{1},J_{2}}\right)\right]\\
= & \int_{\b X}\left(\I_{\sum_{j\in J_{1}}k\left(u,u\right)_{jj}^{2}=0}\prod_{j\in J_{1}^{c}}\I_{k\left(u,u\right)_{jj}>0}\right) \cdot\\
 & \b E\left[\I_{\sum_{j\in J_{2}}k_{\infty}\left(u,u\right)_{jj}^{2}=0} \prod_{j\in J_{2}}\I_{m_{\infty}\left(u\right)_{j}=\t_{j}} \cdot \right.
 \\ & \left. \quad \prod_{j\in J_{2}^{c}}\max\left(\I_{k_{\infty}\left(u,u\right)_{jj}>0},\I_{m_{\infty}\left(u\right)_{j}\neq \t_{j}}\right)\right]\mu\left(du\right)\\
\leq & \int_{\b X}\left(\I_{\sum_{j\in J_{1}}k\left(u,u\right)_{jj}^{2}=0}\prod_{j\in J_{1}^{c}}\I_{k\left(u,u\right)_{jj}>0}\right)\b \cdot \\ & E\left[\I_{\sum_{j\in J_{2}}k_{\infty}\left(u,u\right)_{jj}^{2}=0}\prod_{j\in J_{2}}\I_{\xi\left(u\right)_{j}=\t_{j}}\right]\mu\left(du\right)\\
\leq & \int_{\b X}\left(\I_{\sum_{j\in J_{1}}k\left(u,u\right)_{jj}^{2}=0}\prod_{j\in J_{1}^{c}}\I_{k\left(u,u\right)_{jj}>0}\right) \b E\left[\prod_{j\in J_{2}}\I_{\xi\left(u\right)_{j}=\t_{j}}\right]\mu\left(du\right)\\
= & 0,
\end{align*}
where the last equality follows by the assumption that there exists
$j^{*}\in\left\{ 1,...,d\right\} $ with $j^{*}\in J_{2}$ and
$j^{*}\in J_{1}^{c}$. Indeed, we have for the multivariate Gaussian
process $\xi$ that $\xi\left(u\right)_{j^{*}}\sim\mathcal{N}\left(m\left(u\right)_{j^{*}},k\left(u,u\right)_{j^{*}j^{*}}\right)$
which implies $\xi\left(u\right)_{j^{*}}\overset{a.s.}{\neq}\t_{j^{*}}$
since $k\left(u,u\right)_{j^{*}j^{*}}>0$ and hence 
\begin{align*}
\b E\left[\prod_{j\in J_{2}}\I_{\xi\left(u\right)_{j}=\t_{j}}\right] & =P\left(\bigcap_{j\text{\ensuremath{\in J_{2}}}}\left\{ \xi\left(u\right)_{j}=\t_{j}\right\} \right)\\
 & \leq\min_{j\in J_{2}}P\left(\xi\left(u\right)_{j}=\t_{j}\right)\\
 & \leq P\left(\xi\left(u\right)_{j^{*}}=\t_{j^{*}}\right)\\
 & =0.
\end{align*}
Since $\mu\left(B_{J_{1},J_{2}}\right)$ is almost surely non-negative,
the first property follows.

We will now turn to the second property. For $j\in\left\{ 1,...,d\right\} $
we have that $k\left(u,u\right)_{jj}=0$ implies $k_{n}\left(u,u\right)_{jj}\overset{a.s.}{=}0$
for all $n\in\text{\ensuremath{\mathbb{N}} }\cup\left\{ \infty\right\} $.
Indeed, for a random variable $X$ and $\sigma$-algebra $\c F$ 
\[
\b E\left[\left(X-\b E\left[X\right]\right)^{2}\right]=0
\]
implies $X\overset{a.s.}{=}\b E\left[X\right]$ and hence also $\b E\left[X|\c F\right]\overset{a.s.}{=}X$.
We conclude 
\[
\b E\left[\left(X-\b E\left[X|\c F\right]\right)^{2}|\c F\right]\overset{a.s.}{=}0.
\]
Hence for almost all $\omega\in\Omega$ and all $n\in\b N\cup\left\{ \infty\right\} $
we have for $u\in B_{J_{1},J_{2}}\left(\omega\right)$ that
\[
\xi\left(u\right)\left(\omega\right)_{j}\stackrel{}{=}m_{n}\left(u\right)\left(\omega\right)_{j}=m\left(u\right)_{j}=\begin{cases}
a_{j} & ,j\in J_{1}\setminus J_{2}\\
\t_{j} & ,j\in J_{2}
\end{cases}
\]
for some $a_{j}\in\text{\ensuremath{\mathbb{R}}},$ $a_{j}\neq \t_{j}$.
We will use the notation 
\[
\xi\left(u\right)_{J_{1}^{c}}:=\left(\xi\left(u\right)_{j}\right)_{j\in J_{1}^{c}}
\]
and 
\[
\t_{J_{1}^{c}}:=\left(\t_{j}\right)_{j\in J_{1}^{c}}
\]
for the sub-vectors containing only the elements with index in $J_{1}^{c}$.
Then combining the previous result with the Portmanteau Theorem yields
for almost all $\omega\in\Omega$ for all $u\in B_{J_{1},J_{2}}\left(\omega\right)$
\begin{align*}
P\left(\xi\left(u\right)\geq T|\mathcal{G}_{n}\right)\left(\omega\right)= & P\left(\xi\left(u\right)_{J_{1}^{c}}\geq \t_{J_{1}^{c}}|\mathcal{G}_{n}\right)\left(\omega\right)\prod_{j\in J_{1}\setminus J_{2}}\I_{m\left(u\right)_{j}\geq \t_{j}}\\
\xrightarrow[n\rightarrow\infty]{} & P\left(\xi\left(u\right)_{J_{1}^{c}}\geq \t_{J_{1}^{c}}|\mathcal{G}_{\infty}\right)\left(\omega\right)\prod_{j\in J_{1}\setminus J_{2}}\I_{m\left(u\right)_{j}\geq \t_{j}}\\
= & P\left(\xi\left(u\right)\geq T|\mathcal{G}_{\infty}\right)\left(\omega\right),
\end{align*}
since $\xi\left(u\right)_{J_{1}^{c}}$ is again a Gaussian vector
with almost surely convergent conditional mean and covariance. Indeed,
for all elements in $j\in J_{1}^{c}$ (which also means $j\notin J_{2}$
by assumption) we have $k_{\infty}\left(u,u\right)\left(\omega\right)_{jj}>0$
or $m_{\infty}\left(u\right)\left(\omega\right)_{j}\neq \t_{j}$ which
implies for the boundary of ${\bf T}_{J_{1}^{c}}=\bigtimes_{j\in J_{1}^{c}}\left[\t_{j}\infty\right),$
that
\begin{align*}
P\left(\xi\left(u\right)_{J_{1}^{c}}\in\partial{\bf T}_{J_{1}^{c}}|\mathcal{G}_{\infty}\right)\left(\omega\right) & =P\left(\exists j\in J_{1}^{c}:\xi\left(u\right)_{j}=\t_{j}|\mathcal{G}_{\infty}\right)\left(\omega\right)\\
 & =P\left(\bigcup_{j\in J_{1}^{c}}\left\{ \xi\left(u\right)_{j}=\t_{j}\right\} |\mathcal{G}_{\infty}\right)\left(\omega\right)\\
 & \leq\sum_{j\in J_{1}^{c}}P\left(\xi\left(u\right)_{j}=\t_{j}|\mathcal{G}_{\infty}\right)\left(\omega\right)\\
 & =0
\end{align*}
and hence we can apply the Portmanteau Theorem, which concludes the
second property.
\end{proof}
\begin{proof}
\textbf{(Lemma \ref{lem:6})} \\
It remains to show 
$\left\{ \nu\in\mathbb{M}:\mathcal{H}^{IBV}\left(\nu\right)=0\right\}
\supseteq
\left\{ \nu\in\mathbb{M}:\mathcal{G}^{IBV}\left(\nu\right)=0\right\}$.
Let $\nu=\mathcal{GP}_{d}\left(m,k\right)\in\left\{ \nu\in\mathbb{M}:\mathcal{G}^{IBV}\left(\nu\right)=0\right\}$
and $\xi\sim\nu$, then it holds by the law of total variance (see
Theorem 8.2 in \cite{key-9})
\begin{align*}
0 & =\sup_{x\in\mathbb{X}}\mathcal{G}_{x}^{IBV}\left(\nu\right)\\
 & =\sup_{x\in\mathbb{X}}\mathcal{H}^{IBV}\left(\nu\right)-\mathcal{J}_{x}^{IBV}\left(\nu\right)\\
 & =\sup_{x\in\mathbb{X}}\int_{\mathbb{X}}\text{Var}\left({\bf 1}_{\Gamma\left(\xi\right)}\left(u\right)\right)\mu\left(du\right)-\text{\ensuremath{\b E}}\left[\int_{\mathbb{X}}\text{Var}\left({\bf 1}_{\Gamma\left(\xi\right)}\left(u\right)|Z\left(x\right)\right)\mu\left(du\right)\right]\\
 & =\sup_{x\in\mathbb{X}}\int_{\mathbb{X}}\text{Var}\left({\bf 1}_{\Gamma\left(\xi\right)}\left(u\right)\right)-\b E\left[\text{Var}\left({\bf 1}_{\Gamma\left(\xi\right)}\left(u\right)|Z\left(x\right)\right)\right]\mu\left(du\right)\\
 & =\sup_{x\in\mathbb{X}}\int_{\mathbb{X}}\text{Var}\left(\mathbb{E}\left[{\bf 1}_{\Gamma\left(\xi\right)}\left(u\right)|Z\left(x\right)\right]\right)\mu\left(du\right).
\end{align*}
This means for all $x\in\mathbb{X}$, with $Z(x)=\xi\left(x\right)+\tau\left(x\right)U$,
$U\sim\mathcal{N}_{d}\left(0,I_{d}\right)$ independent of $\xi$,
we have
\begin{align*}
\text{Var}\left(\mathbb{E}\left[{\bf 1}_{\Gamma\left(\xi\right)}\left(u\right)|Z\left(x\right)\right]\right) & =\text{Var}\left(P\left(\xi\left(u\right)\geq T|Z\left(x\right)\right)\right)\\
 & =\text{Var}\left(\mathcal{N}_{d}\left(m_{1}\left(u\right),k_{1}\left(u,u\right)\right)\left({\bf T}\right)\right)\\
 & =0
\end{align*}
for $\mu$-almost all $u\in\X$. Note that $\mathcal{N}_{d}\left(m_{1}\left(u\right),k_{1}\left(u,u\right)\right)$
is a random measure with 
\[
\mathcal{N}_{d}\left(m_{1}\left(u\right),k_{1}\left(u,u\right)\right)\left(\omega\right)=\mathcal{N}_{d}\left(m_{1}\left(u\right)\left(\omega\right),k_{1}\left(u,u\right)\right),
\]
where $m_{1}\left(u\right)$ and $k_{1}\left(u\right)$ are given
by
\[
m_{1}\left(u\right)=m(u)+k(u,x)\Sigma\left(x\right){}^{\dagger}\left(Z\left(x\right)-m\left(x\right)\right),
\]
\[
k_{1}\left(u,u\right)=k\left(u,u\right)-k(u,x)\Sigma\left(x\right){}^{\dagger}k(u,x)^{\top}.
\]
Since $k_{1}\left(u,u\right)$ does not depend on $\omega\in\Omega$
and $\text{Var}\left(\mathcal{N}_{d}\left(m_{1}\left(u\right),k_{1}\left(u,u\right)\right)\left({\bf T}\right)\right)=0$,
we conclude that $m_{1}\left(u\right)$ has to be $P$-almost surely
constant and hence
\begin{align*}
\text{Var}\left(m_{1}\left(u\right)\right) & =\text{Var}\left(k(u,x)\Sigma\left(x\right){}^{\dagger}\left(Z\left(x\right)-m\left(x\right)\right)\right)\\
 & =k(u,x)\Sigma\left(x\right){}^{\dagger}\text{Var}\left(Z\left(x\right)\right)\left(k(u,x)\Sigma\left(x\right){}^{\dagger}\right)^{\top}\\
 & =k(u,x)\Sigma\left(x\right){}^{\dagger}\Sigma\left(x\right)\Sigma\left(x\right){}^{\dagger}k(u,x)^{\top}\\
 & =k(u,x)\Sigma\left(x\right){}^{\dagger}k(u,x)^{\top}\\
 & =0\in\mathbb{R}^{d\times d}.
\end{align*}
Since $\Sigma\left(x\right)$ is symmetric positive semi-definite,
we know that $\Sigma\left(x\right){}^{\dagger}$ is symmetric positive
semi-definite and hence $k(u,x)\Sigma\left(x\right){}^{\dagger}k(u,x)^{\top}=0$
if and only if $k(u,x)\Sigma\left(x\right){}^{\dagger}=0$. Indeed,
if $A\in\Rdd$, $B\in S_{d}^{+}$ and $ABA^{\top}=0$, then
\[
ABA^{\top}=AQQ^{\top}A^{\top}=AQ\left(AQ\right)^{\top}=0
\]
for $Q\in\b R^{d\times d}$ with $B=QQ^{\top}$ and hence $\left\Vert AQ\right\Vert _{F}^{2}=\text{tr}\left(\left(AQ\right)^{\top}AQ\right)=0$
implying $AQ=0$ and finally $AQQ^{\top}=AB=0$ (the other direction
is trivial). Using the properties $\Sigma\left(x\right)\Sigma\left(x\right){}^{\dagger}\Sigma\left(x\right)=\Sigma\left(x\right)$
and $\left(\Sigma\left(x\right){}^{\dagger}\Sigma\left(x\right)\right)^{\top}=\Sigma\left(x\right){}^{\dagger}\Sigma\left(x\right)$
of the pseudo-inverse $\Sigma\left(x\right){}^{\dagger}$, this implies
for every $x\in\b X$
\[
k(u,x)\Sigma\left(x\right)=0\in\mathbb{R}^{d\times d}
\]
for $\mu$-almost all $u\in\b X$. This also yields $k(u,u)\Sigma\left(u\right)=0$
and hence 
\[
\text{tr}\left(k\left(u,u\right)\Sigma\left(u\right)\right)=\text{tr}\left(k\left(u,u\right)^{2}\right)+\text{tr}\left(k\left(u,u\right)\c T\left(u\right)\right)=0
\]
for $\mu$-almost all $u\in\b X$.

We have
\[
\text{tr}\left(k\left(u,u\right)^{2}\right)=\text{tr}\left(k\left(u,u\right)^{\top}k\left(u,u\right)\right)=\left\Vert k\left(u,u\right)\right\Vert _{F}^{2}\geq0
\]
and
\[
\text{tr}\left(k\left(u,u\right)\c T\left(u\right)\right)\geq0,
\]
since $\c T\left(u\right)$ and $k\left(u,u\right)$ are both symmetric
and positive semi-definite. Indeed, we can write for $A,B\in S_{d}^{+}$
that 
\[
\text{tr}\left(AB\right)=\text{tr}\left(AQQ^{\top}\right)=\text{tr}\left(Q^{\top}AQ\right)=\sum_{i=1}^{d}q_{i}^{\top}Aq_{i}\geq0
\]
for $Q\in\b R^{d\times d}$ with $B=QQ^{\top}$ by the Spectral Theorem
(see Theorem 1.3.1 in \cite{key-18}). 

We conclude
\[
\left\Vert k\left(u,u\right)\right\Vert _{F}^{2}=0
\]
and hence $k\left(u,u\right)=0$ for $\mu$-almost all $u\in\b X$,
which yields $P\left(\xi\left(u\right)\geq T\right)={\bf 1}_{m\left(u\right)\geq T}$
and finally
\begin{align*}
\mathcal{H}^{IBV}\left(\nu\right) & =\int_{\mathbb{X}}\text{Var}\left({\bf 1}_{\Gamma\left(\xi\right)}\left(u\right)\right)\mu\left(du\right)\\
 & =\int_{\mathbb{X}}P\left(\xi\left(u\right)\geq T\right)\left(1-P\left(\xi\left(u\right)\geq T\right)\right)\mu\left(du\right)\\
 & =0.
\end{align*}
\end{proof}
\begin{proof}
\textbf{(Theorem \ref{thm:9})} \\
The first statement follows by combining the three Lemmas in Section \ref{subsec:IBV} with Proposition \ref{prop:7}. For the second part note that
as seen in the proof of Lemma \ref{lem:13} we have $k_{\infty}\left(u,u\right)\left(\omega\right)=0\in\Rdd$
for $\mu$-almost all $u\in\mathbb{X}$, since $\mathcal{H}^{IBV}\left(P_{\infty}^{\xi}\left(\text{\ensuremath{\omega}}\right)\right)=0$
for almost all $\omega\in\Omega$ by the previous Theorem. Furthermore,
we have for the random set 
\[
A=\bigcup_{\underset{J_2\subseteq J_1}{J_1,J_2\subseteq\left\{ 1,...,d\right\} }}B_{J_1,J_2}
\]
that 
\[
\mu\left(\mathbb{X}\right)\overset{}{=}\mu\left(A\left(\omega\right)\right)
\]
for almost all $\omega\in\Omega$ and
\[
p_{n}\left(u\right)\left(\omega\right)=P\left(\xi\left(u\right)\geq T|\mathcal{F}_{n}\right)\left(\omega\right)\xrightarrow[n\rightarrow\infty]{}P\left(\xi\left(u\right)\geq T|\mathcal{F}_{\infty}\right)\left(\omega\right)=p_{\infty}\left(u\right)\left(\omega\right)
\]
for all $u\in A\left(\omega\right)$ by Lemma \ref{lem:13}.
Combining both statements we conclude
\[
p_{n}\left(u\right)\left(\omega\right)\xrightarrow[n\rightarrow\infty]{}{\bf 1}_{\xi\left(u\right)\left(\omega\right)\geq T}
\]
for $P$-almost all $\omega\in\Omega$ and $\mu$-almost all $u\in\b X$,
since $k_{\infty}\left(u,u\right)\overset{a.s.}{=}0$ yields $p_{\infty}\left(u\right)=\mathbb{E}\left[{\bf 1}_{\Gamma\left(\xi\right)}\left(u\right)|\mathcal{F}_{\infty}\right]\overset{a.s.}{=}{\bf 1}_{\Gamma\left(\xi\right)}\left(u\right)$.
The claim follows by using the Dominated Convergence Theorem.
\end{proof}

\subsubsection{Proofs of Section 5.2 for EMV}

\begin{proof}
\textbf{(Lemma \ref{lem:7})} \\
$\mathcal{H}^{EMV}$ is $\mathfrak{P}$-uniformly integrable since
we have for every Gaussian random element $\xi$ in $\b S$ 
\begin{align*}
\text{Var}\left(\alpha\left(\xi\right)\right) & =\b E\left[\alpha\left(\xi\right)^{2}\right]-\b E\left[\alpha\left(\xi\right)\right]^{2}\\
 & \leq\b E\left[\alpha\left(\xi\right)^{2}\right]\\
 & \leq\mu\left(\b X\right)^{2},
\end{align*}
since $\alpha\left(\xi\right)=\mu\left(\Gamma\left(\xi\right)\right)$
and $\Gamma\left(\xi\right)\subseteq\b X$ almost surely. Hence we
have for any measure $\nu\in\mathbb{M}$ taking $\xi\sim\nu$ that
\[
\left|\mathcal{H}^{EMV}\left(\nu\right)\right|=\text{Var}\left(\alpha\left(\xi\right)\right)\leq\mu\left(\b X\right)^{2},
\]
since $\mu$ is a finite measure over $\mathbb{X}$. 

$H_{n}^{EMV}$ is $\c F_{n}$-measurable and integrable by definition
of the conditional expectation. Furthermore, it holds
\begin{align*}
\b E\left[H_{n}^{EMV}|\c F_{n-1}\right] & \overset{a.s.}{\leq}H_{n-1}^{EMV},
\end{align*}
since the conditional variance is a supermartingale by Jensen's inequality
and the tower property. Indeed, 
\begin{align*}
\b E\left[\text{Var}\left(\alpha\left(\xi\right)|\c F_{n}\right)|\c F_{n-1}\right] & =\b E\left[\b E\left[\alpha\left(\xi\right)^{2}|\c F_{n}\right]-\b E\left[\alpha\left(\xi\right)|\c F_{n}\right]^{2}|\c F_{n-1}\right]\\
 & =\b E\left[\alpha\left(\xi\right)^{2}|\c F_{n-1}\right]-\b E\left[\b E\left[\alpha\left(\xi\right)|\c F_{n}\right]^{2}|\c F_{n-1}\right]\\
 & \overset{a.s.}{\leq}\b E\left[\alpha\left(\xi\right)^{2}|\c F_{n-1}\right]-\b E\left[\alpha\left(\xi\right)|\c F_{n-1}\right]^{2}\\
 & =\text{Var}\left(\alpha\left(\xi\right)|\c F_{n-1}\right).
\end{align*}
\end{proof}
\begin{proof}
\textbf{(Lemma \ref{lem:14})} \\
Let $\xi$ be a random element in $\b S$ with $\xi\sim\c{GP}_{d}\left(m,k\right)$
and let $\bm{\nu}_{n}\in\mathfrak{P}\left(\xi\right)$ for all $n\in\mathbb{N}\cup\left\{ \infty\right\} $
such that $\bm{\nu}_{n}\xrightarrow{}\bm{\nu}_{\infty}$
almost surely. By definition of $\mathfrak{P}\left(\xi\right)$ there
exist $\sigma$-algebras $\c G_{n}$ such that $\bm{\nu}_{n}=P\left(\xi\in\cdot|\mathcal{G}_{n}\right)$
for all $n\in\mathbb{N}\cup\left\{ \infty\right\} $. 

Note first that for almost all $\omega\in\Omega$ and all $n\in\b N\cup\left\{ \infty\right\} $
we have for $u_{i}\in B_{J_{1}^{i},J_{2}^{i}}\left(\omega\right)$,
$j\in J_{1}^{i}$ and $i\in\left\{ 1,2\right\} $
\[
\xi\left(u_{i}\right)\left(\omega\right)_{j}\stackrel{}{=}m_{n}\left(u_{i}\right)\left(\omega\right)_{j}=m\left(u_{i}\right)_{j}=\begin{cases}
a_{j}^{i} & ,j\in J_{1}^{i}\setminus J_{2}^{i}\\
\t_{j} & ,j\in J_{2}^{i}
\end{cases}
\]
for some $a_{j}^{i}\in\text{\ensuremath{\mathbb{R}} }$ with $a_{j}^{i}\neq \t_{j}$.

Combining this with the Portmanteau Theorem leads to
\begin{align*}
\b E\left[{\bf 1}_{\Gamma\left(\xi\right)}\left(u_{i}\right)|\c G_{n}\right]\left(\omega\right)= & P\left(\xi\left(u_{i}\right)\geq T|\mathcal{G}_{n}\right)\left(\omega\right)\\
= & P\left(\xi\left(u_{i}\right)_{\left(J_{1}^{i}\right)^{c}}\geq \t_{\left(J_{1}^{i}\right)^{c}}|\mathcal{G}_{n}\right)\left(\omega\right)\prod_{j\in J_{1}^{i}\setminus J_{2}^{i}}\I_{m\left(u_{i}\right)_{j}\geq \t_{j}}\\
\xrightarrow[n\rightarrow\infty]{} & P\left(\xi\left(u\right)_{\left(J_{1}^{i}\right)^{c}}\geq \t_{\left(J_{1}^{i}\right)^{c}}|\mathcal{G}_{\infty}\right)\left(\omega\right)\prod_{j\in J_{1}^{i}\setminus J_{2}^{i}}\I_{m\left(u_{i}\right)_{j}\geq \t_{j}}\\
= & P\left(\xi\left(u_{i}\right)\geq T|\mathcal{G}_{\infty}\right)\left(\omega\right)\\
= & \b E\left[{\bf 1}_{\Gamma\left(\xi\right)}\left(u_{i}\right)|\c G_{\infty}\right]\left(\omega\right)
\end{align*}
for almost all $\omega\in\Omega$ for all $u_{i}\in B_{J_{1}^{i},J_{2}^{i}}\left(\omega\right)$
and $i\in\left\{ 1,2\right\} $, as we have already seen in the second
claim in the proof of Lemma \ref{lem:5}. We conclude 
\begin{align*}
    & \b E\left[{\bf 1}_{\Gamma\left(\xi\right)}\left(u_{1}\right)|\c G_{n}\right]\left(\omega\right)\b E\left[{\bf 1}_{\Gamma\left(\xi\right)}\left(u_{2}\right)|\c G_{n}\right]\left(\omega\right) \\ \xrightarrow[n\rightarrow\infty]{} & \b E\left[{\bf 1}_{\Gamma\left(\xi\right)}\left(u_{1}\right)|\c G_{\infty}\right]\left(\omega\right)\b E\left[{\bf 1}_{\Gamma\left(\xi\right)}\left(u_{2}\right)|\c G_{\infty}\right]\left(\omega\right).
\end{align*}

Similarly, we have again by the Portmanteau Theorem 
\begin{align*}
 & \b E\left[{\bf 1}_{\Gamma\left(\xi\right)}\left(u_{1}\right){\bf 1}_{\Gamma\left(\xi\right)}\left(u_{2}\right)|\c G_{n}\right]\left(\omega\right)\\
= & P\left(\xi\left(u_{1}\right)\geq T,\xi\left(u_{2}\right)\geq T|\c G_{n}\right)\left(\omega\right)\\
= & P\left(\xi\left(u_{1}\right)_{\left(J_{0}^{1}\right)^{c}}\geq \t_{\left(J_{1}^{1}\right)^{c}},\xi\left(u_{2}\right)_{\left(J_{1}^{2}\right)^{c}}\geq \t_{\left(J_{1}^{2}\right)^{c}}|\c G_{n}\right)\left(\omega\right)\prod_{\underset{i=1,2}{j\in J_{1}^{i}\setminus J_{2}^{i}}}\I_{m\left(u_{i}\right)_{j}\geq \t_{j}}\\
\xrightarrow[n\rightarrow\infty]{} & P\left(\xi\left(u_{1}\right)_{\left(J_{1}^{1}\right)^{c}}\geq \t_{\left(J_{1}^{1}\right)^{c}},\xi\left(u_{2}\right)_{\left(J_{1}^{2}\right)^{c}}\geq \t_{\left(J_{1}^{2}\right)^{c}}|\c G_{\infty}\right)\left(\omega\right)\prod_{\underset{i=1,2}{j\in J_{1}^{i}\setminus J_{2}^{i}}}\I_{m\left(u_{i}\right)_{j}\geq \t_{j}}\\
= & P\left(\xi\left(u_{1}\right)\geq T,\xi\left(u_{2}\right)\geq T|\c G_{\infty}\right)\left(\omega\right)\\
= & \b E\left[{\bf 1}_{\Gamma\left(\xi\right)}\left(u_{1}\right){\bf 1}_{\Gamma\left(\xi\right)}\left(u_{2}\right)|\c G_{\infty}\right]\left(\omega\right),
\end{align*}
since $\left(\text{\ensuremath{\xi\left(u_{1}\right)^{\top},\xi\left(u_{2}\right)^{\top}}}\right)^{\top}$
is a Gaussian vector and
\begin{align*}
& P\left(\bigcup_{i=1,2}\left\{ \xi\left(u_{i}\right)_{\left(J_{1}^{i}\right)^{c}}\in\partial{\bf T}_{\left(J_{1}^{i}\right)^{c}}\right\} |\mathcal{G}_{\infty}\right)\left(\omega\right) \\ \leq & \sum_{i=1,2}P\left(\xi\left(u_{i}\right)_{\left(J_{1}^{i}\right)^{c}}\in\partial{\bf T}_{\left(J_{1}^{i}\right)^{c}}|\mathcal{G}_{\infty}\right)\left(\omega\right)\\
= & 0.
\end{align*}
Combining both statements yields for almost all $\omega\in\Omega$
and all $u_{i}\in B_{J_{1}^{i},J_{2}^{i}}\left(\omega\right)$,
$i=1,2$
\[
\text{Cov}\left({\bf 1}_{\Gamma\left(\xi\right)}\left(u_{1}\right),{\bf 1}_{\Gamma\left(\xi\right)}\left(u_{2}\right)|\c G_{n}\right)\left(\omega\right)\xrightarrow[n\rightarrow\infty]{}\text{Cov}\left({\bf 1}_{\Gamma\left(\xi\right)}\left(u_{1}\right),{\bf 1}_{\Gamma\left(\xi\right)}\left(u_{2}\right)|\c G_{\infty}\right)\left(\omega\right).
\]
\end{proof}
\begin{proof}
\textbf{(Lemma \ref{lem:aux})} \\
Assume without loss of generality that all random elements $U,V$
and $W$ are centered. By Fernique's Theorem
\[
\b E\left[\text{exp}\left(c\left\Vert V\right\Vert \right)\right]=\int_{\b R^{d}}\text{exp}\left(c\left\Vert x\right\Vert \right)P^{V}\left(dx\right)<\infty
\]
 for some $c>0$ and hence by Theorem 3.2.18 in \cite{key-36} the
space of polynomials $\Pi^{d}$ is dense in the space $L^{2}\left(\b R^{d},\c B\left(\b R^{d}\right),dP^{V}\right)$.
The polynomials in $\Pi^{d}$ are here given by 
\[
p\left(x\right)=\sum_{\alpha\in A}c_{\alpha}x^{\alpha},
\]
where $A$ is some (finite) subset of $\b N_{0}^{d}$ and the product
$x^{\alpha}=x_{1}^{\alpha_{1}}...x_{d}^{\alpha_{d}}$ is called monomial
in the variables $x_{1},...,x_{d}$ for $\alpha\in\b N_{0}^{d}$ and
$c_{\alpha}\in\b R$. The integer $\left|\alpha\right|=\alpha_{1}+...+\alpha_{d}$
is called total degree of the monomial $x^{\alpha}$. This means every
$f\in L^{2}\left(\b R^{d},\c B\left(\b R^{d}\right),dP^{V}\right)$
can be approximated by a sequence of polynomials $\left(x\mapsto\sum_{\alpha\in A_{n}}c_{\alpha}x^{\alpha}\right)_{n\in\b N}$
in $\Pi^{d}$, i.e. 
\[
\int_{\mathbb{R}^{d}}\left(f\left(x\right)-\sum_{\alpha\in A_{n}}c_{\alpha}x^{\alpha}\right)^{2}P^{V}\left(dx\right)\xrightarrow[n\rightarrow\infty]{}0.
\]
By the factorization Lemma every element in $Z\in L^{2}\left(\Omega,\sigma\left(V\right),P\right)$
can be written as $f\left(V\right)$ for some measurable function
$f:\b R^{d}\rightarrow\b R$ with 
\[
\b E\left[f\left(V\right)^{2}\right]=\int_{\mathbb{R}^{d}}f\left(x\right)^{2}P^{V}\left(dx\right)<\infty.
\]
Hence the above statement yields the existence of a polynomial sequence
such that 
\[
\b E\left[\left(Z-\sum_{\alpha\in A_{n}}c_{\alpha}V^{\alpha}\right)^{2}\right]=\b E\left[\left(f\left(V\right)-\sum_{\alpha\in A_{n}}c_{\alpha}V^{\alpha}\right)^{2}\right]\xrightarrow[n\rightarrow\infty]{}0.
\]

Assume now that $U$ is not orthogonal to $L^{2}\left(\Omega,\sigma\left(V\right),P\right)$.
Then there exists an element $Z\in L^{2}\left(\Omega,\sigma\left(V\right),P\right)$
such that $\b E\left[UZ\right]\neq0$ and since 
\[
\left|\b E\left[UZ\right]-\b E\left[U\sum_{\alpha\in A_{n}}c_{\alpha}V^{\alpha}\right]\right|\leq\b E\left[U^{2}\right]^{\frac{1}{2}}\underbrace{\b E\left[\left(Z-\sum_{\alpha\in A_{n}}c_{\alpha}V^{\alpha}\right)^{2}\right]^{\frac{1}{2}}}_{\xrightarrow[n\rightarrow\infty]{}0}
\]
by Cauchy-Schwarz there must exist $N\in\mathbb{N}$ such that for
all $n\geq N$ we have $\b E\left[U\sum_{\alpha\in A_{n}}c_{\alpha}V^{\alpha}\right]\neq0$
and by the linearity of the expectation there must hence be a multi-index
$\alpha$ with total degree $\left|\alpha\right|=k_{0}$ for some
$k_{0}\in\b N_{0}$ such that $\b E\left[UV^{\alpha}\right]\neq0$.

This means the set 
\[
I_{k_{0}}:=\left\{ \alpha\in\mathbb{N}_{0}^{d}:\b E\left[UV^{\alpha}\right]\neq0,\left|\alpha\right|=k_{0}\right\} 
\]
is non-empty. By reducing $k_{0}$ by one in each step, we can calculate
the sets $I_{k}$ for every $0\leq k\leq k_{0}$ and define by $k_{*}$
the smallest $k$ such that $I_{k}$ is non-empty. Then we have for
every multi-index $\lambda\in I_{k_{*}}$ 
\[
\mathbb{E}\left[UV^{\lambda}\right]=\mathbb{E}\left[U\prod_{i=1}^{d}V_{i}^{\lambda_{i}}\right]\neq0
\]
 and
\[
\mathbb{E}\left[U\prod_{i=1}^{d}V_{i}^{k_{i}}\right]=0,
\]
if $k_{i}\leq\lambda_{i}$ for all $i\in\left\{ 1,...,n\right\} $
with at least one strict inequality. Indeed, if this would not be
the case we would have a contradiction with the minimality of $k_{*}$. 

Using the independence, we conclude 
\begin{align*}
\mathbb{E}\left[U\prod_{i=1}^{d}\left(V+W\right)_{i}^{\lambda_{i}}\right] & =\mathbb{E}\left[U\prod_{i=1}^{d}\left(\sum_{k=0}^{\lambda_{i}}\binom{\lambda_{i}}{k}V_{i}^{k}W_{i}^{\lambda_{i}-k}\right)\right]\\
 & =\mathbb{E}\left[U\sum_{k_{1},...,k_{d}=0}^{\lambda_{1},...,\lambda_{d}}\prod_{i=1}^{d}\binom{\lambda_{i}}{k_{i}}V_{i}^{k_{i}}W_{i}^{\lambda_{i}-k_{i}}\right]\\
 & =\sum_{k_{1},...,k_{d}=0}^{\lambda_{1},...,\lambda_{d}}\prod_{i=1}^{d}\binom{\lambda_{i}}{k_{i}}\mathbb{E}\left[U\prod_{i=1}^{d}V_{i}^{k_{i}}\right]\b E\left[\prod_{i=1}^{d}W_{i}^{\lambda_{i}-k_{i}}\right]\\
 & =\mathbb{E}\left[U\prod_{i=1}^{d}V_{i}^{\lambda_{i}}\right]\\
 & \neq0
\end{align*}
and hence $U$ can not be orthogonal to $L^{2}\left(\Omega,\sigma\left(V+W\right),P\right)$.
\end{proof}



\bibliographystyle{imsart-number} 
\bibliography{multiSUR_references}       


\end{document}